\newcommand{\map}[1]{\xrightarrow{#1}}
\newcommand{\mil}{\varprojlim}
\newcommand{\iso}{\cong}
\newcommand{\define}{\stackrel{\mathrm{def}}{=}}
\newcommand{\Gal}{\mathrm{Gal}}
\newcommand{\Hom}{\mathrm{Hom}}
\newcommand{\Aut}{\mathrm{Aut}}
\newcommand{\End}{\mathrm{End}}
\newcommand{\Q}{\mathbb Q}
\newcommand{\Z}{\mathbb Z}
\newcommand{\C}{\mathbb C}
\newcommand{\F}{\mathbb F}
\newcommand{\wild}{\mathrm{wild}}
\newcommand{\tame}{\mathrm{tame}}
\newcommand{\hecke}{\mathfrak{h}}
\newcommand{\co}{\mathcal{O}}
\newcommand{\Ta}{\mathrm{Ta}}
\newcommand{\fn}{\mathfrak{n}}
\newcommand{\ord}{\mathrm{ord}}
\newcommand{\cyc}{\mathrm{cyc}}
\newcommand{\FT}{\mathbf{T}}
\newcommand{\fa}{\mathfrak{a}}
\newcommand{\Sel}{\mathrm{Sel}}
\newcommand{\Gr}{\mathrm{Gr}} 
\newcommand{\Iw}{\mathrm{Iw}}
\newcommand{\unr}{\mathrm{unr}}
\newcommand{\Spec}{\mathrm{Spec}}
\newcommand{\Frob}{\mathrm{Fr}}
\newcommand{\tors}{\mathrm{tors}}
\newcommand{\ram}{\mathrm{ram}}
\begin{document}

\author{Benjamin Howard}

\address{Dept. of Mathematics, University of Chicago,
5734 S. University Ave., Chicago, IL 60637}

\email{howardbe@bc.edu}

\title{Variation of Heegner points in Hida families}

\begin{abstract}
Given a weight two modular form $f$ with associated $p$-adic Galois representation $V_f$, for certain quadratic imaginary fields $K$ one can construct canonical classes in the Galois cohomology of $V_f$ by taking the Kummer images of Heegner points on the modular abelian variety attached to $f$.  We show that these classes can be interpolated as $f$ varies in a Hida  family and construct an Euler system of big Heegner points for Hida's universal ordinary deformation of $V_f$.  We show that the specialization of this big Euler system to any form in the Hida family is nontrivial, extending results of Cornut and Vatsal from modular forms of weight two and trivial character to all ordinary modular forms, and propose a  horizontal nonvanishing conjecture for these cohomology classes.  The horizontal nonvanishing conjecture  implies, via the theory of Euler systems,  a  conjecture of Greenberg on the generic ranks of Selmer groups in Hida families. 
\end{abstract}

\maketitle

\theoremstyle{plain}
\newtheorem{Thm}{Theorem}[subsection]
\newtheorem{Prop}[Thm]{Proposition}
\newtheorem{Lem}[Thm]{Lemma}
\newtheorem{Cor}[Thm]{Corollary}
\newtheorem{Con}[Thm]{Conjecture}
\newtheorem{BigTheorem}{Theorem}

\theoremstyle{definition}
\newtheorem{Def}[Thm]{Definition}
\newtheorem{Hyp}[Thm]{Hypothesis}

\theoremstyle{remark}
\newtheorem{Rem}[Thm]{Remark}
\newtheorem{Ques}[Thm]{Question}

\renewcommand{\labelenumi}{(\alph{enumi})}
\renewcommand{\theBigTheorem}{\Alph{BigTheorem}}

\section{Introduction}

Fix a positive integer $N$ and a prime $p\nmid N$, and fix, once and for all, embeddings of algebraic closures  $\overline{\Q}\hookrightarrow\overline{\Q}_p$, $\overline{\Q}\hookrightarrow\C$.   We denote by $\omega:(\Z/p\Z)^\times\map{}\mu_{p-1}$ the Teichmuller character and view $\omega$ also as a Dirichlet character modulo $Np$.  Let
$$
g=\sum_{n>0}a_nq^n \in S_{k}(\Gamma_0(Np),\omega^j)
$$
be a normalized eigenform (for all Hecke operators $T_\ell$ for $\ell\nmid Np$ and $U_\ell$ for $\ell\mid Np$) of weight $k\ge 2$ and character $\omega^j$. The existence of such a form implies $j\equiv k\pmod{2}$. Fix a  finite extension $F/\Q_p$ which contains all Fourier coefficients of $g$ and let $\co_F$ denote the ring of integers of $F$. We assume that $g$ is an \emph{ordinary $p$-stabilized newform} in the sense that $a_p\in \co_F^\times$ and the conductor of $g$ is divisible by $N$, (i.e. the system of Hecke eigenvalues $\{a_n\mid (n, Np)=1\}$ of $g$ agrees with that of a new eigenform of level $N$ or $Np$).  Let $\rho_g:G_\Q\map{}\mathrm{GL}_2(F)$ be the $p$-adic Galois representation attached to $g$ by Deligne. Fix a quadratic imaginary field $K$.

\begin{Hyp}
The data of the previous paragraph is to remain fixed throughout this article, and the following hypotheses are assumed throughout:
\begin{enumerate}
\item
$p\nmid 6N$;
\item
there is an ideal $\mathfrak{N}$ of the maximal order $\co$ of $K$ such that $\co/\mathfrak{N}\iso \Z/N\Z$;
\item
the semi-simple residual representation attached to $\rho_g$ is absolutely irreducible.
\end{enumerate}
\end{Hyp}

Note that we impose no hypotheses on the behavior of $p$ in $K$; $p$ may be split, ramified, or inert.
For the remainder of the introduction we will also assume that $p\nmid\phi(N)$ (Euler's function) and that $N$ is relatively prime to $\mathrm{disc}(K)$, so that the existence of the ideal $\mathfrak{N}$ implies that all prime divisors of $N$ are split in $K$.

In \S \ref{Hida} we recall Hida's definition of a local domain $R$, finite and flat over the Iwasawa algebra $\Lambda=\co_F[[1+p\Z_p]]$, whose arithmetic prime ideals parametrize  the \emph{Hida family} of $g$. We also recall Hida's construction of a big Galois representation $\FT$ which is free of rank two over $R$ and admits a twist $\FT^\dagger$ possessing a perfect alternating pairing $\FT^\dagger\times\FT^\dagger\map{}R(1)$. The main result of this article is the construction, for every positive integer $c$  prime to $N$, of a canonical cohomology class
$$
\mathfrak{X}_c\in  \tilde{H}^1_f(H_c,\FT^\dagger)
$$
where $H_c$ is the ring class field of $K$ of conductor $c$ and $\tilde{H}_f^1(H_c,\FT^\dagger)$ is  Nekov{\'a}{\v{r}}'s extended Selmer group.  As the conductor $c$ varies the classes $\mathfrak{X}_c$ form an Euler system in the sense of Kolyvagin.  The construction and basic properties of the classes $\mathfrak{X}_c$ are given in \S \ref{ss:construction}-\ref{ss:selmer}.

To any arithmetic prime ideal $\mathfrak{p}\subset R$  Hida's theory associates an ordinary modular form $g_\mathfrak{p}$ with coefficients in $F_\mathfrak{p}$, the residue field of the localization $R_\mathfrak{p}$ (so that $F_\mathfrak{p}$ is a finite extension of $\Q_p$).  If we define a Galois representation $V_\mathfrak{p}^\dagger= \FT^\dagger\otimes_R F_\mathfrak{p}$ then $V_\mathfrak{p}^\dagger$ is a self-dual twist of the $p$-adic Galois representation attached to $g_\mathfrak{p}$ by Deligne.  Applying the map on cohomology induced by $\FT^\dagger\map{}V_\mathfrak{p}^\dagger$   to the classes $\mathfrak{X}_c$ we obtain an Euler system for $V_\mathfrak{p}^\dagger$.  In the case of weight $2$ and trivial character this construction essentially recovers the Kummer images of classical Heegner points on modular abelian  varieties.  In \S \ref{ss:vnt} and \S \ref{ss:prnv} we  show that the image of $\mathfrak{X}_{p^s}$ in $\tilde{H}^1_f(H_{p^s},V_\mathfrak{p}^\dagger)$ is nontrivial for $s\gg 0$.  The precise result, Corollary \ref{VNV}, is actually somewhat stronger and  extends Cornut and Vatsal's proof of Mazur's conjecture on the nonvanishing of Heegner points from the case of modular forms of  weight two and trivial character to all ordinary modular forms. 
Given this nonvanishing result, a suitable extension of Kolyvagin's theory of Euler systems  would prove that 
\begin{equation}\label{growth}
\mathrm{dim}_{F_\mathfrak{p}} \tilde{H}^1_f(D_s, V_\mathfrak{p}^\dagger) =p^s+O(1)
\end{equation}
for every form  $g_\mathfrak{p}$ in the Hida family.  Here $D_s$ is the subfield of the anticyclotomic $\Z_p$-extension of $K$  (i.e. the unique $\Z_p$-extension contained in $\cup H_{p^s}$) having degree $p^s$ over $K$.  The weaker result that (\ref{growth}) holds for all but finitely many  $g_\mathfrak{p}$, and for all $g_\mathfrak{p}$ of weight $2$, has been proved by Nekov{\'a}{\v{r}}   (\cite{selmer_complexes} Theorems 12.9.11(ii)  and 12.9.8(i), respectively).

In \S \ref{SS:MC} and \S \ref{SS:HNV} we propose two conjectures.  The first, Conjecture \ref{mc}, is a two-variable extension of Perrin-Riou's \cite{pr}  Iwasawa main conjecture for Heegner points.  As above, a suitable extension of the theory of Euler systems would prove one divisibility of this conjecture.  See  \cite{bert, HowA, HowB}  for results toward Perrin-Riou's original conjecture.  The second conjecture, Conjecture \ref{HNV}, is that the corestriction of $\mathfrak{X}_1$ from $H_1$ to $K$ is not $R$-torsion.  This conjecture implies, by an extension of Kolyvagin's theory due to  Nekov{\'a}{\v{r}},  a conjecture of Greenberg  \cite{greenberg_deformation}  predicting that the $F_\mathfrak{p}$ dimension of the Selmer group $\tilde{H}^1_f(\Q,V_\mathfrak{p}^\dagger)$ is equal to zero or one (depending on the sign of the functional equation of the Hida family of $g$) for all but finitely many  $g_\mathfrak{p}$ in the Hida family.   See Corollary \ref{generic rank}.

We remark that higher weight analogs of Heegner points have been constructed elsewhere in the literature, e.g.  \cite{howard-cycles,nek:euler, nek:gz, zhang97}, using  special cycles on Kuga-Sato varieties.   Our method is completely different.  For ordinary modular forms of even weight and trivial character both constructions are valid, and in these cases it would be interesting to understand the connection between the two.

The author wishes to express his thanks to J. Pottharst for pointing out the ambiguity in $\Theta$ discussed in Remark \ref{twist remark}, and to the anonymous referee for providing helpful comments on an earlier version of this article.

Throughout the article Galois cohomology is always understood to mean continuous cohomology.


\section{Big Heegner points}
\label{S:construction}


Set $\Phi_s=\Gamma_0(N)\cap\Gamma_1(p^s)\subset\mathrm{SL}_2(\Z)$ and let $Y_s$ denote the affine modular curve classifying  elliptic curves with $\Phi_s$ level structure, by which we mean a triple consisting of an elliptic curve $E$, a cyclic order $N$ subgroup of $E$, and a point of exact order $p^s$ on $E$.  Let $Y_s\hookrightarrow X_s$ be the usual  compactification obtained by adjoining cusps and let $J_s$ be the Jacobian of $X_s$.   Denote by
$$
X_{s+1}\map{\alpha}X_s
$$
the degeneracy map which is given by $(E,C,\pi) \mapsto (E,C, p\cdot \pi)$ on the affine curve $Y_{s+1}$. We view $Y_s$, $X_s$, and $J_s$ as schemes over $\Spec(\Q)$.


\subsection{Hida theory}
\label{Hida}


We recall the basic facts of Hida theory that we need; the reader may refer to \cite{EPW,GS,hida,selmer_complexes} for more details.  Identify $\mu_{p-1}$ with $(\Z/p\Z)^\times$ using the Teichmuller character and abbreviate
$$
\Gamma=1+p\Z_p
\hspace{1cm}
\Delta=(\Z/p\Z)^\times
$$
so that $\Z_p^\times \iso \Delta\times\Gamma.$ Define the Iwasawa algebra $\Lambda=\co_F[[\Gamma]]$ and write $z\mapsto [z]$ for the inclusion of group-like elements $\Z_p^\times \map{}\co_F[[\Z_p^\times]]^\times$. For each  $i\in \Z/(p-1)\Z$ define an idempotent $e_i\in\co_F[[\Z_p^\times]]$ by
$$
e_i=\frac{1}{p-1}\sum_{\delta\in\Delta } \omega^{-i}(\delta)[\delta]
$$
Let  $\co_F[[\Z_p^\times]]\map{}\hecke^\ord$ be Hida's big ordinary Hecke algebra of tame level $N$, defined as follows. Let $\hecke_{r,s}$ be the $\co_F$-algebra generated by all Hecke operators $T_\ell$ for $\ell\nmid Np$, together with the operators $U_\ell$ for $\ell\mid Np$ and the nebentype operators $\langle m\rangle$ for  $m\in (\Z/p^s\Z)^\times$, acting on the space of $p$-adic cusp forms $S_r(\Phi_s, \overline{\Q}_p)$. We make $\hecke_{r,s}$ into an $\co_F[[\Z_p^\times]]$-algebra by  $[z] \mapsto z^{r-2}\langle z\rangle$ (this normalization differs from much of the literature, in which  $[z]\mapsto z^r\langle z\rangle$; our normalization is chosen so that the action of $[z]$ agrees with $\langle z\rangle$ in weight two).  Note that $[-1]\mapsto 1$ as $\langle -1\rangle$ acts as  $(-1)^r$ on modular forms of weight $r$.  Hida's ordinary projector $e^\ord=\lim U_p^{m!}$ defines an  idempotent in each $\hecke_{r,s}$, and these are  compatible with the natural surjections $\hecke_{r,s+1}\map{}\hecke_{r,s}$.   If we define $\hecke_{r,s}^\ord=e^\ord\hecke_{r,s}$ then the  algebra 
$$
\hecke^\ord=\mil_s \hecke_{r,s}^\ord
$$
is finite and flat over $\Lambda$ and is independent of the weight $r$ by \cite[Theorem 1.1]{hida}.

\begin{Def}\label{arithmetic}
If $A$ is any finitely generated commutative $\Lambda$-algebra then an $\co_F$-algebra map $A\map{}\overline{\Q}_p$ is \emph{arithmetic}  if the composition
$$
\Gamma\map{\gamma\mapsto[\gamma]}A^\times\map{}\overline{\Q}_p^\times
$$
has the form $\gamma\mapsto \psi(\gamma)\gamma^{r-2}$ for some integer $r\ge 2$ and some finite order character $\psi$ of $\Gamma$. The kernel of an arithmetic map is  an \emph{arithmetic prime} of $A$.  If $\mathfrak{p}$ is an arithmetic prime then the residue field $F_\mathfrak{p}=A_\mathfrak{p}/\mathfrak{p}A_\mathfrak{p}$ is a finite extension of $F$.   The composition 
$\Gamma\map{}A^\times\map{}F_\mathfrak{p}^\times$
has the form $\gamma\mapsto \psi_\mathfrak{p}(\gamma)\gamma^{r-2}$ for a finite order character $\psi_\mathfrak{p}:\Gamma\map{}F_\mathfrak{p}^\times$ called the \emph{wild character} of $\mathfrak{p}$ and an integer $r$ called the \emph{weight} of $\mathfrak{p}$.
\end{Def}

Our fixed cuspform $g$ determines an  arithmetic  map  (denoted the same way) 
$$
g:\hecke^\ord\map{} \hecke^\ord_{k,1}\map{}\co_F
$$ 
characterized by $T_\ell\mapsto a_\ell$ for $\ell\nmid Np$, $U_\ell\mapsto a_\ell$ for $\ell\mid Np$, and 
$$
[\delta] \mapsto \omega^{k+j-2}(\delta)
\hspace{1cm}
[\gamma ]\mapsto \gamma^{k-2}
$$
for $\delta\in \Delta$ and $\gamma\in \Gamma$. There is a decomposition  of $\hecke^\ord$ as a direct sum of its completions at maximal ideals, and we let $\hecke^\ord_\mathfrak{m}$ be  
the unique local summand through which $g$ factors.   
As $g(e_i)=0$ for $i\not=k+j-2$, we must have
$$
\hecke^\ord_\mathfrak{m}=e_{k+j-2} \hecke^\ord_\mathfrak{m}.
$$ 
According to \cite[\S 12.7.5]{selmer_complexes} the localization of $\hecke_{\mathfrak{m}}^\ord$ at the kernel of $g$ is a discrete valuation ring. It follows that there is a unique minimal prime $\mathfrak{a}\subset\hecke_\mathfrak{m}^\ord$ such that $g$ factors through the integral domain
$$
R\define \hecke^\ord_\mathfrak{m}/\fa.
$$ 
If we let $\mathcal{L}$ and $\mathcal{K}$ denote the fraction fields of $\Lambda$ and $R$, respectively, then $\mathcal{K}$ is a finite extension of $\mathcal{L}$ and is the (primitive) component of  $\hecke_\mathfrak{m}^\ord\otimes_\Lambda\mathcal{L}$ to which $g$ \emph{belongs} in the sense of \cite[\S 1]{hida}. 
The $\Lambda$-algebra $\hecke^\ord_\mathfrak{m}$ is the  \emph{Hida family} of $g$ and $R$ is the \emph{branch} of the Hida family on which $g$ lives.   Define  $\hecke^\ord$-modules
\begin{eqnarray*}
\mathrm{Ta}_p^\ord(J_s)  &=&  e^\ord (\mathrm{Ta}_p(J_s)\otimes_{\Z_p}\co_F) \\
\mathbf{Ta}^\ord  &=& \mil \Ta_p^\ord(J_s) \\
 \mathbf{Ta}^\ord_\mathfrak{m} &=&  \mathbf{Ta}^\ord \otimes_{\hecke^\ord} \hecke_\mathfrak{m}^\ord \\
\FT &=&  \mathbf{Ta}^\ord_\mathfrak{m}\otimes_{\hecke^\ord_\mathfrak{m}} R
\end{eqnarray*}
(the Hecke operators $T_\ell$, $U_\ell$, and $\langle \ell\rangle$ act on $J_s$ and on the Tate module $\Ta_p(J_s)$ via the  Albanese action as in \cite[p. 236]{MW}, and the inverse limit in the second definition is with respect to $\alpha_*$). All four modules  admit  natural $\hecke^\ord$-linear actions of $G_\Q$.

\begin{Prop}\label{Gorenstein}
The $\hecke^\ord_\mathfrak{m}$-module $\mathbf{Ta}^\ord_\mathfrak{m}$ is free of rank two.   As a Galois representation $\mathbf{Ta}^\ord_\mathfrak{m}$ is unramified outside $Np$  and the arithmetic Frobenius of a  prime $\ell\nmid Np$ acts with characteristic polynomial $X^2-T_\ell X+[\ell] \ell$. Furthermore,  
$$
\hecke^\ord_\mathfrak{m}\iso \Hom_\Lambda(\hecke^\ord_\mathfrak{m},\Lambda)
$$ 
as $\hecke^\ord_\mathfrak{m}$-modules.
\end{Prop}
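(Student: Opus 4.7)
The plan is to combine classical Eichler--Shimura theory on the Jacobians $J_s$ with Hida's control theorems to pass to the limit in $s$, and then invoke the Mazur--Tilouine--Wiles result (valid under residual irreducibility) to extract the freeness and Gorenstein duality. The three assertions split naturally: (2) is level-by-level Eichler--Shimura, while (1) and (3) are the two closely linked statements that require the full hypothesis $(c)$.

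For the Galois-theoretic assertion (2), I would work at finite level first. The modular curve $X_s$ has good reduction outside $Np$, so $\Ta_p(J_s)$ is unramified at every $\ell \nmid Np$; and the Eichler--Shimura congruence gives, in the Albanese normalization,
\[
\Frob_\ell^2 - T_\ell\,\Frob_\ell + \langle\ell\rangle\ell = 0
\]
on $\Ta_p(J_s)$ for $\ell\nmid Np$. The idempotent $e^\ord$ commutes with Galois, and unramifiedness plus the characteristic polynomial relation are both preserved under the inverse limit over $s$ (with respect to $\alpha_*$) and under the subsequent tensor operations. Finally the normalization $[z]\mapsto z^{r-2}\langle z\rangle$ with $r=2$ converts $\langle\ell\rangle\ell$ into $[\ell]\ell$, yielding the stated characteristic polynomial for $\mathbf{Ta}_\mathfrak{m}^\ord$ and hence (by tensoring with $R$) for $\FT$.

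For assertions (1) and (3) the strategy is: work in weight two and show that for each $s$ the $\mathfrak{m}$-localization $T_s = e_\mathfrak{m}\cdot\Ta_p^\ord(J_s)\otimes_{\Z_p}\co_F$ is free of rank two over $e_\mathfrak{m}\hecke_{2,s}^\ord$, and that this Hecke algebra is Gorenstein over $\Lambda/\mathrm{Ann}\,e_\mathfrak{m}$. The first point is the classical theorem of Mazur--Ribet--Wiles: residual absolute irreducibility of $\overline{\rho}_g$ forces the faithful $\hecke_{2,s,\mathfrak{m}}^\ord[G_\Q]$-module $T_s$ to be locally free of rank two at every maximal ideal, hence free of rank two after $\mathfrak{m}$-localization. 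The Gorenstein property at each level $s$ is the Mazur--Tilouine--Wiles theorem, which is deduced from the perfect Weil pairing on $J_s$ combined with the Atkin--Lehner involution $w_s$, whose effect on Hecke operators is governed by the identity $w_s T_\ell w_s^{-1} = \langle\ell\rangle^{-1}T_\ell$. This gives a perfect $\hecke_{2,s,\mathfrak{m}}^\ord$-linear pairing $T_s \otimes T_s \to \co_F(1)$ and identifies $\hecke_{2,s,\mathfrak{m}}^\ord$ with its $\co_F$-linear dual.

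One then passes to the limit using Hida's control theorem, which identifies $\mil_s e_\mathfrak{m}\hecke_{2,s}^\ord$ with $\hecke_\mathfrak{m}^\ord$ and ensures compatibility of bases under the transition maps $\alpha_*$; the limit of free rank two modules remains free of rank two, and the limit of Gorenstein dualities produces the required isomorphism $\hecke_\mathfrak{m}^\ord \iso \Hom_\Lambda(\hecke_\mathfrak{m}^\ord,\Lambda)$. The main obstacle in this program is really the residual-irreducibility input: without hypothesis $(c)$, neither freeness nor Gorensteinness is true in general, and the entire structure collapses. Once $(c)$ is in force, the rest is a careful bookkeeping exercise comparing normalizations in Hida theory with the weight-two Eichler--Shimura picture on $J_s$.
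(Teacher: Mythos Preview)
Your proposal is correct and takes essentially the same approach as the paper: the paper's proof is a one-line citation of \cite[Th\'eor\`eme 7]{MT} under hypothesis (c), and your sketch simply unpacks the ingredients (Eichler--Shimura at finite level, the Mazur--Ribet--Wiles freeness criterion under residual irreducibility, and passage to the limit via Hida's control theorem) that go into that cited result.
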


\begin{proof}
As we assume that the residual representation attached to  $\rho_g$ is irreducible, this is \cite[Th\'eor\`eme 7]{MT}.
\end{proof}

\begin{Def}\label{def:critical}
Factor the $p$-adic cyclotomic character $\epsilon_\cyc=\epsilon_\tame \cdot \epsilon_{\wild}$  as a product of characters taking values in $\mu_{p-1}$ and $1+p\Z_p$.  Define the \emph{critical character} $\Theta:G_\Q\map{}\Lambda^\times$ by
$$
\Theta=\epsilon_\tame^{\frac{k+j}{2}-1}\cdot [\epsilon_\wild^{1/2}]
$$
where $\epsilon_\wild^{1/2}$ is the unique square root of $\epsilon_\wild$ taking values in $1+p\Z_p$. 
Let $R^\dagger$ denote $R$ viewed as a module over itself but with  $G_\Q$ acting through $\Theta^{-1}$, and define the \emph{critical twist}  $\FT^\dagger=\FT\otimes_R R^\dagger$.  
\end{Def}

\begin{Rem}\label{twist remark}
The integer $j$ of the introduction is determined  only modulo $p-1$, and the resulting ambiguity in $\frac{k+j}{2}$ modulo $p-1$ means that $\Theta$ is determined only up to multiplication by the quadratic character  of conductor $p$.  The two possible choices of $\Theta$ arising from the two possible choices of $j$ modulo $2(p-1)$ are largely indistinguishable for our purposes, although the sign in the functional equation of the  two variable $p$-adic $L$-function of $R$, evaluated at $\Theta$ and viewed as a function on arithmetic primes, may depend on the choice.  See Remark \ref{functional ambiguity} and Proposition \ref{functional equation}.  We now fix a choice of $\Theta$ once and for all.
\end{Rem}

Using $\omega$ to identify $\Delta\iso\mu_{p-1}$, the idempotent $e_{k+j-2}\in\co_F[[\Z_p^\times]]$ satisfies  
$$
e_{k+j-2} \cdot [\zeta]=\zeta^{k+j-2} \cdot e_{k+j-2}
$$ 
for any $\zeta\in \mu_{p-1}$. As noted earlier $\hecke_\mathfrak{m}^\ord=e_{k+j-2}\hecke_\mathfrak{m}^\ord$, and so $[\epsilon_\tame]=\epsilon_\tame^{k+j-2}$ in $\hecke_\mathfrak{m}^\ord$.  It follows that 
\begin{equation}\label{square root}
\Theta^2(\sigma)  = [\epsilon_\cyc(\sigma)]
\end{equation}
in $\hecke_\mathfrak{m}^\ord$ for all $\sigma\in G_\Q$.  We will sometimes view $\Theta$ as a character $\Z_p^\times\map{}\Lambda^\times$ by factoring $\Theta$ through $\Gal(\Q(\mu_{p^\infty})/\Q)$ and using the isomorphism $$\epsilon_\cyc:  \Gal(\Q(\mu_{p^\infty})/\Q)\iso\Z_p^\times.$$  Then for all $z\in\Z_p^\times$, $\Theta^2(z)=[z]$ as elements of  $\hecke_\mathfrak{m}^\ord$.  Similarly
for any weight $r$ arithmetic prime $\mathfrak{p}$ of $R$, define $F_\mathfrak{p}^\times$-valued characters
$$
\Theta_\mathfrak{p}: G_\Q\map{\Theta}R^\times\map{}F_\mathfrak{p}^\times
\hspace{1cm}
[\cdot]_\mathfrak{p}: \Z_p^\times\map{[\cdot]}R^\times\map{}F_\mathfrak{p}^\times.
$$
As above we may view $\Theta_\mathfrak{p}$ as a character of $\Z_p^\times$, so that 
$$
\Theta_\mathfrak{p}(\delta\gamma)  =  \omega^{\frac{k+j}{2}-1}(\delta) \cdot \psi_\mathfrak{p}^{1/2}(\gamma) \cdot  \gamma^{\frac{r-2}{2}} 
$$
for all $\delta\in\Delta$ and $\gamma\in\Gamma$. 

By  \cite[\S 1.6.10]{nek} or \cite[\S 4]{ohta} and (\ref{square root})  there is a perfect, alternating, $G_\Q$-invariant, $\Lambda$-bilinear pairing 
$$
\mathbf{Ta}^\ord_\mathfrak{m}\times \mathbf{Ta}^\ord_\mathfrak{m}  \map{}\Lambda(1)\otimes\Theta^2
$$ 
where $\Lambda(1)$ denotes the usual Tate twist of $\Lambda$ (and the unadorned $\Lambda$ has trivial Galois action). By the discussion of \cite[\S 1.6.10]{nek} and the final claim of Proposition \ref{Gorenstein}, this pairing induces a perfect $R$-bilinear pairing 
\begin{equation}\label{pairing}
\FT^\dagger \times\FT^\dagger \map{}R(1).
\end{equation}

 Given an arithmetic prime  $\mathfrak{p}\subset R$ of weight $r$ set $s=\max\{1,\ord_p(\mathrm{cond}(\psi_\mathfrak{p})) \}$. By a fundamental result of Hida \cite[Theorem 1.2]{hida} the composition 
$$
\hecke^\ord\map{}\hecke_\mathfrak{m}^\ord\map{}R\map{}F_\mathfrak{p} 
$$  
factors through $ \hecke_{r,s}^\ord$ and determines an ordinary $p$-stabilized newform  
\begin{eqnarray*}\label{special form}
g_\mathfrak{p}\in S_r(\Phi_s,  \psi_\mathfrak{p}\omega^{k+j-r} , F_\mathfrak{p}).
\end{eqnarray*} 
We define a Galois representation
$$
V_\mathfrak{p}^\dagger=\FT^\dagger\otimes_R F_\mathfrak{p} \iso \FT^\dagger_\mathfrak{p}/ \mathfrak{p}\FT^\dagger_\mathfrak{p}.
$$
Tensoring the pairing (\ref{pairing})  with $F_\mathfrak{p}$ yields an alternating nondegenerate pairing 
$$
V_\mathfrak{p}^\dagger\times V_\mathfrak{p}^\dagger\map{}F_\mathfrak{p}(1).
$$

\begin{Lem}\label{oldform}
Suppose $\mathfrak{p}$ is an arithmetic prime of $R$ of weight $r>2$ and trivial character (so that $r$ must be even).  Then the form $g_\mathfrak{p}\in S_r(\Gamma_0(Np),F_\mathfrak{p})$ is old at $p$.
\end{Lem}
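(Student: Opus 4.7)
The plan is to exploit the numerical incompatibility, in weights $r>2$, between ordinarity of $a_p$ and the forced Steinberg shape of a $p$-new form of level $Np$ with trivial nebentype.

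First I would unwind what the hypotheses on $\mathfrak{p}$ give us at the level of classical modular forms. Triviality of the character $\psi_\mathfrak{p}\omega^{k+j-r}$ forces $\psi_\mathfrak{p}=1$ and $k+j\equiv r\pmod{p-1}$, so that $s=\max\{1,\ord_p(\mathrm{cond}(\psi_\mathfrak{p}))\}=1$, and hence the $F_\mathfrak{p}$-valued form $g_\mathfrak{p}$ really does lie in $S_r(\Gamma_0(Np),F_\mathfrak{p})$ as stated. By Hida's theorem already invoked, $g_\mathfrak{p}$ is an ordinary $p$-stabilized newform, which by definition means one of two things: either $g_\mathfrak{p}$ is the ordinary $p$-stabilization of a newform of level $N$ (in which case it is $p$-old, which is what we want), or $g_\mathfrak{p}$ is itself a newform of exact level $Np$. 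The plan is to rule out the second alternative by contradiction.

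Suppose $g_\mathfrak{p}$ is a newform of exact level $Np$. Then its local automorphic representation $\pi_p$ at $p$ has conductor exactly $p$; and since the global nebentype is trivial, the central character of $\pi_p$ is unramified. The only representations of $\mathrm{GL}_2(\Q_p)$ with conductor $p$ and unramified central character are the unramified twists of the Steinberg representation. For such a newform, a standard computation (e.g.\ via the local new vector and the action of $U_p$) gives the $U_p$-eigenvalue
\[
a_p(g_\mathfrak{p}) = \pm p^{(r-2)/2},
\]
which has strictly positive $p$-adic valuation whenever $r>2$. This contradicts ordinarity, which demands that $a_p(g_\mathfrak{p})$ be a unit in $\co_{F_\mathfrak{p}}$.

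There is no substantive obstacle in this argument beyond citing the classical Steinberg eigenvalue formula; the lemma is essentially the observation that ``new at $p$ with trivial nebentype'' and ``ordinary'' are mutually exclusive outside weight two. The only place one might have to be a little careful is the claim that $\pi_p$ must be an unramified twist of Steinberg rather than, say, a ramified principal series or a supercuspidal; but ramified principal series have ramified central character and supercuspidals have conductor at least $p^2$, so both are excluded by the trivial nebentype together with $s=1$.
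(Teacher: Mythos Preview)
Your argument is correct and follows the same strategy as the paper: assume $g_\mathfrak{p}$ is new at $p$, deduce that the $U_p$-eigenvalue has positive valuation when $r>2$, and contradict ordinarity. The paper simply cites Miyake's textbook for the relation $\alpha_\mathfrak{p}^2=p^{r-2}$ directly, whereas you route through the classification of local representations to identify the Steinberg case; but the substance is identical.
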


\begin{proof}
If $g_\mathfrak{p}$ were new at $p$ then $U_p$ would act with eigenvalue $\alpha_\mathfrak{p}$ satisfying $\alpha_\mathfrak{p}^2=p^{r-2}$ \cite[Theorem 4.6.17]{miyake}, contradicting $g_\mathfrak{p}$ being ordinary. 
\end{proof}

\begin{Lem}\label{DVR}
For any arithmetic prime $\mathfrak{p}\subset R$ the localization $R_\mathfrak{p}$ is a discrete valuation ring.
\end{Lem}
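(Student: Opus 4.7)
The plan is to deduce the lemma from the argument already cited for the specific prime $\ker g$ (\cite[\S 12.7.5]{selmer_complexes}), combined with Hida's vertical control theorem. An arithmetic algebra map $R \to F_\mathfrak{p}$ lifts to a composition $\hecke^\ord_\mathfrak{m} \twoheadrightarrow R \to F_\mathfrak{p}$ whose kernel $\tilde{\mathfrak{p}}$ is an arithmetic prime of $\hecke^\ord_\mathfrak{m}$ containing $\fa$. I would first prove that $(\hecke^\ord_\mathfrak{m})_{\tilde{\mathfrak{p}}}$ is a DVR, and then deduce that $\fa$ is the unique minimal prime of $\hecke^\ord_\mathfrak{m}$ contained in $\tilde{\mathfrak{p}}$, so that $\fa \cdot (\hecke^\ord_\mathfrak{m})_{\tilde{\mathfrak{p}}} = 0$ and the natural map $(\hecke^\ord_\mathfrak{m})_{\tilde{\mathfrak{p}}} \to R_\mathfrak{p}$ is an isomorphism.

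To show $(\hecke^\ord_\mathfrak{m})_{\tilde{\mathfrak{p}}}$ is a DVR, set $\mathfrak{q} = \tilde{\mathfrak{p}} \cap \Lambda$. Because the image of $\Lambda$ in $F_\mathfrak{p}$ is a one-dimensional order in a finite extension of $F$, $\mathfrak{q}$ is a height-one prime of the two-dimensional regular local ring $\Lambda$, hence principal, say $\mathfrak{q} = (P)$ for a distinguished polynomial $P$. By Hida's vertical control theorem (\cite[Theorem~1.2]{hida}), the reduction $\hecke^\ord_\mathfrak{m}/\mathfrak{q}\hecke^\ord_\mathfrak{m}$ is identified with the appropriate summand of the classical ordinary Hecke algebra $e^\ord \hecke_{r,s,\mathfrak{m}}$ acting on $S_r(\Phi_s, F)$, which is reduced by multiplicity one for ordinary newforms. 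Localizing at $\tilde{\mathfrak{p}}$ yields a reduced Artinian local ring, i.e.\ a field, so $\tilde{\mathfrak{p}}(\hecke^\ord_\mathfrak{m})_{\tilde{\mathfrak{p}}} = \mathfrak{q}(\hecke^\ord_\mathfrak{m})_{\tilde{\mathfrak{p}}} = (P)$. Since $\hecke^\ord_\mathfrak{m}$ is finite flat over $\Lambda$, the localization $(\hecke^\ord_\mathfrak{m})_{\tilde{\mathfrak{p}}}$ is finite flat over the DVR $\Lambda_\mathfrak{q}$ and in particular has Krull dimension one; combined with the principality of its maximal ideal this makes it a regular local ring of dimension one, hence a DVR.

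The remaining step is immediate: $(\hecke^\ord_\mathfrak{m})_{\tilde{\mathfrak{p}}}$ being a domain forces $\fa$ to be the unique minimal prime of $\hecke^\ord_\mathfrak{m}$ contained in $\tilde{\mathfrak{p}}$, and therefore $\fa \cdot (\hecke^\ord_\mathfrak{m})_{\tilde{\mathfrak{p}}} = 0$, giving $R_\mathfrak{p} \iso (\hecke^\ord_\mathfrak{m})_{\tilde{\mathfrak{p}}}$. The main obstacle in this plan is the reducedness of $\hecke^\ord_\mathfrak{m}/\mathfrak{q}\hecke^\ord_\mathfrak{m}$: this rests on the precise form of Hida's control theorem identifying the reduction with a classical, finite-level Hecke algebra, together with multiplicity one for ordinary eigenforms. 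Both ingredients are standard but nontrivial, and both are implicit in the passage of \cite{selmer_complexes} that was cited earlier to handle the case $\tilde{\mathfrak{p}} = \ker g$.
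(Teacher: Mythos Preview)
Your proposal is correct and follows essentially the same route as the paper, which simply cites \cite[\S 12.7.5]{selmer_complexes}; you have unpacked the content of that reference (Hida's control theorem plus reducedness of the classical Hecke algebra after inverting $p$, which holds since $p\notin\tilde{\mathfrak{p}}$) and added the easy passage from $(\hecke^\ord_\mathfrak{m})_{\tilde{\mathfrak{p}}}$ to $R_\mathfrak{p}$ via the uniqueness of the minimal prime under $\tilde{\mathfrak{p}}$. There is nothing to correct.
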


\begin{proof}
This follows from the discussion of  \cite[\S 12.7.5]{selmer_complexes}.
\end{proof}

\begin{Lem}\label{ring lemma}
Suppose $M$ is a finitely generated $R$-module and $m\in M$ is nontorsion. Then $m\not\in \mathfrak{p}M_\mathfrak{p}$ for all but finitely many arithmetic primes $\mathfrak{p}\subset R$.
\end{Lem}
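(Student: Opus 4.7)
The plan is to analyze the short exact sequence
$0 \to R \to M \to N \to 0$,
where the first map is $r \mapsto rm$ (injective since $m$ is nontorsion) and $N=M/Rm$. Localizing at an arithmetic prime $\mathfrak{p}\subset R$ and tensoring with the residue field $F_\mathfrak{p} = R_\mathfrak{p}/\mathfrak{p}R_\mathfrak{p}$, the Tor long exact sequence begins
$$
\mathrm{Tor}_1^{R_\mathfrak{p}}(N_\mathfrak{p}, F_\mathfrak{p}) \map{\partial} F_\mathfrak{p} \map{} M \otimes_R F_\mathfrak{p},
$$
where the second arrow sends $1$ to $m\otimes 1$. Thus $m\in\mathfrak{p}M_\mathfrak{p}$ if and only if $\partial$ is surjective, which in particular requires $\mathrm{Tor}_1^{R_\mathfrak{p}}(N_\mathfrak{p}, F_\mathfrak{p})\ne 0$.

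Next I would invoke Lemma \ref{DVR}: $R_\mathfrak{p}$ is a discrete valuation ring. Over a DVR every finitely generated module decomposes as a direct sum of cyclic modules, and $\mathrm{Tor}_1$ with the residue field vanishes precisely when the module is torsion-free (equivalently, free). Hence the set of arithmetic primes $\mathfrak{p}$ with $m\in\mathfrak{p}M_\mathfrak{p}$ is contained in the set of arithmetic primes for which $N_\mathfrak{p}$ carries nonzero $R_\mathfrak{p}$-torsion.

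Let $T\subseteq N$ be the $R$-torsion submodule; it is finitely generated by Noetherianity. Since localization only kills torsion, the $R_\mathfrak{p}$-torsion submodule of $N_\mathfrak{p}$ equals $T_\mathfrak{p}$. The ring $R$ is a local domain of Krull dimension two (finite and flat over $\Lambda$, which has dimension two), and $T$ is torsion, so $\mathrm{Supp}_R(T)=V(\mathrm{Ann}_R T)$ is a proper closed subset of $\Spec R$ and is therefore contained in a finite union $\{\mathfrak{q}_1,\ldots,\mathfrak{q}_r\}$ of height-one primes. Arithmetic primes are themselves of height one, again by Lemma \ref{DVR} (since a DVR has Krull dimension one), so at most $r$ arithmetic primes can lie in $\mathrm{Supp}_R(T)$. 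For every other arithmetic $\mathfrak{p}$, $T_\mathfrak{p}=0$, so $N_\mathfrak{p}$ is torsion-free and hence free over the DVR $R_\mathfrak{p}$, the Tor group vanishes, the map $F_\mathfrak{p}\to M\otimes_R F_\mathfrak{p}$ is injective, and $m\notin\mathfrak{p}M_\mathfrak{p}$.

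There is no serious obstacle; the argument only combines the Tor exact sequence, the structure of finitely generated modules over a DVR, and the elementary fact that a torsion module over a two-dimensional Noetherian domain has support contained in finitely many height-one primes. The essential external inputs are Lemma \ref{DVR} (which simultaneously provides the DVR property of $R_\mathfrak{p}$ and the height-one property of arithmetic primes) and the finiteness of $R$ over $\Lambda$ (which forces $\dim R=2$).
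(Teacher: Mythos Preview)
Your argument is correct and takes a genuinely different route from the paper.  The paper instead considers the ideal $I\subset R$ generated by $\{f(m):f\in\Hom_R(M,R)\}$, notes that $m\in\mathfrak{p}M_\mathfrak{p}$ forces $I\subset\mathfrak{p}R_\mathfrak{p}$ (any $f$ localizes to $M_\mathfrak{p}\to R_\mathfrak{p}$, carrying $\mathfrak{p}M_\mathfrak{p}$ into $\mathfrak{p}R_\mathfrak{p}$), and then invokes Matsumura to conclude that $(R/I)_\mathfrak{p}\neq 0$ for only finitely many arithmetic $\mathfrak{p}$.  Both approaches ultimately rest on the same commutative-algebra fact---a nonzero ideal in the two-dimensional Noetherian domain $R$ lies in only finitely many height-one primes---but reach it differently: the paper via linear functionals on $M$, you via the quotient $N=M/Rm$, its torsion submodule, and the vanishing of $\mathrm{Tor}_1$ over the DVR $R_\mathfrak{p}$.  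Your route is longer but makes explicit the role of Lemma~\ref{DVR} (both for the DVR property and for arithmetic primes having height one), which the paper's citation to Matsumura leaves implicit.

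One minor wording issue: the sentence ``$\mathrm{Supp}_R(T)$ \ldots\ is therefore contained in a finite union $\{\mathfrak{q}_1,\ldots,\mathfrak{q}_r\}$ of height-one primes'' is not literally true as a containment of sets (the support will typically also contain the maximal ideal of $R$, and the minimal primes of $\mathrm{Ann}(T)$ need not all have height exactly one).  What you need, and what your next sentence correctly uses, is that the \emph{height-one} primes in $\mathrm{Supp}_R(T)$ are among the finitely many minimal primes of $\mathrm{Ann}_R(T)$.  This does not affect the validity of the argument.
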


\begin{proof}
Let $I\subset R$ be the image of the map $\Hom_{R}(M,R)\map{}R$  defined by $f\mapsto f(m)$. For any arithmetic prime $\mathfrak{p}$
 $$
 m\in\mathfrak{p}M_\mathfrak{p} \implies I\subset \mathfrak{p}R_\mathfrak{p}
 \implies (R/I)_\mathfrak{p}\not=0.
 $$
By   \cite[Theorem 6.5]{Mat} this can only occur for finitely many $\mathfrak{p}$.
\end{proof}


\subsection{Construction of big Heegner points}
\label{ss:construction}


Fix a quadratic imaginary field $K$ with maximal order $\co$ as in the introduction, and an ideal $\mathfrak{N}$ with $\co/\mathfrak{N}\iso \Z/N\Z$. For any positive integer $c$ we denote by $\co_c$  the order of conductor $c$ in $K$ and by $H_c$ the ring class field of $\co_c$.  Let $H_c^{(Np)}$ denote the maximal extension of $H_c$ unramified outside $Np$ and set 
$$
\mathfrak{G}_c=\Gal(H_c^{(Np)}/H_c).
$$
Let $\widehat{\Q}$ and $\widehat{K}$ denote the rings of finite adeles of $\Q$ and $K$, respectively. 
 
Now fix a positive integer $c$ prime to $N$.  For each  integer $s\ge 0$ define an elliptic curve $E_{c,s}$ over $\C$ with complex multiplication by $\co_{cp^s}$ by
$$
E_{c,s}(\C)\iso \C/\co_{cp^s}.
$$
The $\mathfrak{N}\cap\co_{cp^s}$-torsion subgroup 
$$
\fn_{c,s}=E_{c,s}[\mathfrak{N}\cap\co_{cp^s}]
$$ 
is  cyclic of order $N$.  The inclusion $\co_{cp^{s+1}}\subset \co_{cp^s}$ induces a $p$-isogeny $E_{c,s+1}\map{}E_{c,s}$ compatible with the action of $\co_{cp^{s+1}}$ on the source and target, and taking  $\fn_{c,s+1}$ isomorphically to $\fn_{c,s}$.   The kernel of the composition
$$
j_{c,s}: E_{c,s}\map{}E_{c,s-1}\map{}\cdots \map{} E_{c,1}\map{}E_{c,0}
$$
is cyclic of order $p^s$ and is characterized as the $p^s\co_c$-torsion in $E_{c,s}$.  Fix a generator $\varpi$ of $\co/\Z\iso\Z$. Then $c \varpi$ generates 
$$
\ker(j_{c,s})\iso \co_c/\co_{cp^s}
$$ 
for every $s$, and defines a $\Gamma_1(p^s)$-level structure $\pi_{c,s}=c\varpi\in E_{c,s}[p^s]$.  Define
\begin{equation}\label{first point}
h_{c,s}= (E_{c,s},\fn_{c,s},\pi_{c,s})\in X_s(\C).
\end{equation}

By class field theory  $\Q(\sqrt{p^*})\subset H_p$, where $p^*=(-1)^{\frac{p-1}{2}}p$.  The restriction of  $\epsilon_\cyc$ to $\Gal(\overline{\Q}/\Q(\sqrt{p^*}))$ takes values in $(\Z_p^\times)^2$, and it follows that there is a unique continuous homomorphism 
$$
\vartheta:\Gal(\overline{\Q}/\Q(\sqrt{p^*})) \map{}\Z_p^\times/\{\pm 1\}
$$
such that $\vartheta^2=\epsilon_\cyc$.

\begin{Lem}\label{yoke}
Suppose $s>0$. The field of moduli of  $E_{c,s}$ is contained in  $H_{cp^s}$.  After fixing a model of $E_{c,s}$ over $H_{cp^s}$ the subgroup $\fn_{c,s}$ is also defined over $H_{cp^s}$, and for each $\sigma\in \Gal(\overline{\Q}/H_{cp^s})$ and $P\in \ker(j_{c,s})$ the equality  
$
P^\sigma =\vartheta(\sigma) P
$  
holds up to the action of $\Aut_{\overline{\Q}}(E_{c,s})=\{\pm 1\}$.
\end{Lem}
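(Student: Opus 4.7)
The plan is to treat the three assertions in order: the first two follow from the standard theory of complex multiplication for (not necessarily maximal) orders, while the third rests on a direct Weil pairing computation carried out in the explicit analytic model.

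For the field of moduli: a lattice homothetic to $\co_{cp^s}$ has $j$-invariant generating the ring class field $H_{cp^s}$ over $K$, so $j(E_{c,s}) \in H_{cp^s}$ and a model of $E_{c,s}$ over $H_{cp^s}$ exists.  For any such model, the embedding $\co_{cp^s} \hookrightarrow \End(E_{c,s})$ is realized over $H_{cp^s}$ (standard; see e.g.\ Silverman's \emph{Advanced Topics in the Arithmetic of Elliptic Curves}, Ch.~II).  The subgroup $\fn_{c,s}$ is cut out as the intersection of the kernels of the endomorphisms in the ideal $\mathfrak{N}\cap\co_{cp^s}$ and is therefore stable under $\Gal(\overline\Q/H_{cp^s})$.

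For the third assertion I work analytically with $E_{c,s}(\C) = \C/\co_{cp^s}$ and $\co_{cp^s} = \Z\oplus\Z\cdot cp^s\varpi$.  A direct computation in the lattice shows $\co_{cp^s}/p^s\co_{cp^s} \cong (\Z/p^s\Z)[Y]/(Y^2)$, where $Y$ denotes the class of $cp^s\varpi$ (using the fact that $(cp^s\varpi)^2 \in p^s\co_{cp^s}$), and that $E_{c,s}[p^s] = p^{-s}\co_{cp^s}/\co_{cp^s}$ is free of rank one over this ring with $P' = p^{-s}$ as cyclic generator.  In particular $P = c\varpi = Y\cdot P'$, so $\ker(j_{c,s})$ coincides with both the image and the annihilator of $Y$ on $E_{c,s}[p^s]$, and $\{P',P\}$ is a $\Z/p^s\Z$-basis of $E_{c,s}[p^s]$.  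Since the CM is defined over $H_{cp^s}$, any $\sigma \in \Gal(\overline\Q/H_{cp^s})$ acts on $E_{c,s}[p^s]$ by multiplication by a unit $a_\sigma + b_\sigma Y \in (\co_{cp^s}/p^s\co_{cp^s})^\times$ with $a_\sigma \in (\Z/p^s\Z)^\times$; applying this to $P = Y P'$ and using $Y^2 = 0$ yields $P^\sigma = a_\sigma P$, so the Galois action on $\ker(j_{c,s})$ is given by the character $\chi(\sigma) = a_\sigma$.

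To identify $\chi$ with $\vartheta$, compute the Weil pairing two ways.  Galois-equivariance gives $e_{p^s}(P'^\sigma,P^\sigma) = e_{p^s}(P',P)^{\epsilon_\cyc(\sigma)}$, while expanding the left side using the explicit $\sigma$-action and the alternating property of $e_{p^s}$ gives $e_{p^s}(P',P)^{a_\sigma^2}$.  Since $\{P',P\}$ is a $\Z/p^s\Z$-basis, $e_{p^s}(P',P)$ is a primitive $p^s$-th root of unity, so $\chi(\sigma)^2 \equiv \epsilon_\cyc(\sigma) \pmod{p^s}$.  Reducing $\vartheta$ modulo $p^s$ and using that the only square roots of $1$ in $(\Z/p^s\Z)^\times$ are $\pm 1$ (for odd $p$), one concludes $\chi \equiv \vartheta \pmod{\{\pm 1\}}$, which gives the desired identity $P^\sigma = \vartheta(\sigma) P$ up to $\Aut_{\overline\Q}(E_{c,s}) = \{\pm 1\}$.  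The main obstacle is really the module-theoretic description of $E_{c,s}[p^s]$ over $\co_{cp^s}/p^s\co_{cp^s}$, in particular its freeness of rank one and the identifications $P = YP'$ and $Y^2 = 0$; once these are in place the Galois and Weil pairing computations are essentially formal.
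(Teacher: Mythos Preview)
Your argument is correct and takes a genuinely different route to the third assertion.  The paper invokes Shimura's idelic main theorem of complex multiplication directly: choosing an idele $x\in\widehat{\co}_{cp^s}^\times$ with Artin symbol $\sigma$, the main theorem identifies the action of $\sigma$ on $\ker(j_{c,s})$ with multiplication by $x_p^{-1}$ (up to $\co_{cp^s}^\times=\{\pm 1\}$), and writing $x_p=\alpha+cp^s\beta$ one gets $\alpha^2\equiv \mathrm{N}_{K/\Q}(x_p)\pmod{p^s}$; since $\epsilon_\cyc(\sigma)=\mathrm{N}_{K/\Q}(x_p)^{-1}$, the action of $\sigma$ is by a square root of $\epsilon_\cyc(\sigma)$.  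You instead bypass the idelic computation by (i) observing that $E_{c,s}[p^s]$ is free of rank one over $\co_{cp^s}/p^s\co_{cp^s}\iso(\Z/p^s\Z)[Y]/(Y^2)$, so that Galois---commuting with CM over $H_{cp^s}$---acts through a unit $a_\sigma+b_\sigma Y$, and (ii) using the Weil pairing to pin down $a_\sigma^2\equiv\epsilon_\cyc(\sigma)$.  Your approach is arguably more elementary in that it only needs the statement that CM descends to $H_{cp^s}$ rather than the full torsion-point functoriality in Shimura's theorem; the paper's approach, on the other hand, dovetails with the later computations (e.g.\ Proposition~\ref{parity}) that really do require tracking specific ideles through the main theorem.
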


\begin{proof}
Fix $\sigma\in\Aut(\C/H_{cp^s})$ and let $x\in \widehat{K}^\times$ be a finite idele whose (arithmetic) Artin symbol is equal to the restriction of $\sigma$ to the maximal abelian extension of $K$.  Let $\widehat{\co}_{cp^s}$ be the closure of $\co_{cp^s}$ in $\widehat{K}$.  As $\sigma$ fixes $H_{cp^s}$ we must have $x\in K^\times\cdot \widehat{\co}_{cp^s}^\times$,  and multiplying $x$ by an element of $K^\times$ we may assume $x\in\widehat{\co}_{cp^s}^\times$. The main theorem of complex multiplication \cite[Theorem 5.4]{shimura} then gives an isomorphism of complex tori
$$
E_{c,s}(\C)\iso \C/\co_{cp^s}= \C/x^{-1}\co_{cp^s} \iso E^\sigma_{c,s}(\C).
$$
Thus $E_{c,s}$ has a model over $H_{cp^s}$, which we now fix.  All endomorphisms of $E_{c,s}$ are then defined over $H_{cp^s}$ by \cite[(5.1.3)]{shimura}, and the characterizations of  $\fn_{c,s}$ and $\ker(j_{c,s})$ in terms of $I$-torsion subgroups for ideals $I\subset\co_{cp^s}$ shows that they are also defined over $H_{cp^s}$. Let $x_p\in (\co_{cp^s}\otimes\Z_p)^\times$ be the $p$-component of $x$, and write $x_p=\alpha+cp^s \beta$ with $\alpha\in\Z_p^\times$ and $\beta\in \co\otimes\Z_p$. In particular 
$$
\alpha^2\equiv \mathrm{N}_{K/\Q}(x_p)\pmod{p^s}.
$$
Again applying the main theorem of complex multiplication we obtain a commutative diagram
$$
\xymatrix{
{\Z/p^s\Z} \ar[r]\ar[d]_{\alpha^{-1}\cdot } &
{(\co_{c}\otimes\Z_p)/(\co_{cp^s}\otimes\Z_p) }
\ar[rr]^\xi \ar[d]_{x_p^{-1}\cdot}    & & {\ker(j_{c,s})} \ar[d]_\sigma \\
{\Z/p^s\Z} \ar[r]  &
{(\co_{c}\otimes\Z_p)/(\co_{cp^s}\otimes\Z_p) } \ar[rr]^{\xi'}
&  & {\ker(j_{c,s})}
}
$$
(the unlabeled horizontal arrows are the isomorphisms determined by $1\mapsto c\varpi$) for some group isomorphisms $\xi$ and $\xi'$ which agree up to the action  of $\co_{cp^s}^\times=\{\pm 1\}$. As the composition
$$
\Z_p^\times\map{}\widehat{\Q}^\times \map{}\Gal(\Q(\mu_{p^\infty})/\Q) \map{\epsilon_\cyc} \Z_p^\times
$$
is given by $y\mapsto y^{-1}$ and takes $\mathrm{N}_{K/\Q}(x_p)$ to $\epsilon_\cyc(\sigma)$ (the first arrow is the natural inclusion, the second is the arithmetic  Artin symbol), we see that 
$$
\epsilon_\cyc(\sigma) = \mathrm{N}_{K/\Q}(x_p^{-1})\equiv \alpha^{-2}\pmod{p^s}.
$$
Hence the action of $\sigma$ on $\ker(j_{c,s})$ is given as multiplication by the unique (up to $\pm 1$)  square root of  $\epsilon_\cyc(\sigma)$ in $(\Z/p^s\Z)^\times$.
\end{proof}

\begin{Cor}\label{yoke lemma}
Suppose $s>0$ and let $L_{c,s}=H_{cp^s}(\mu_{p^s})$.  Then $h_{c,s}\in X_s(L_{c,s})$ and
\begin{equation}\label{yoke lemma display}
h_{c,s}^\sigma=\langle\vartheta(\sigma)\rangle\cdot h_{c,s} 
\end{equation}
 for all $\sigma\in \Gal(L_{c,s}/H_{cp^s})$.
 \end{Cor}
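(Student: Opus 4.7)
The plan is to read off both assertions directly from Lemma \ref{yoke}, with the only extra ingredient being that $\vartheta$ becomes $\pm 1$-valued modulo $p^s$ on $\Gal(\overline{\Q}/L_{c,s})$. Two small bookkeeping issues appear along the way: the value $\vartheta(\sigma)$ is only defined modulo $\{\pm 1\}$, and the Galois action on torsion points supplied by Lemma \ref{yoke} is only determined modulo $\Aut_{\overline{\Q}}(E_{c,s})=\{\pm 1\}$. Both ambiguities will cancel because the $-1$ automorphism of $E_{c,s}$ preserves $\fn_{c,s}$ as a subgroup and negates $\pi_{c,s}$, so that $\langle-1\rangle$ acts trivially on $X_s$.

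First I would establish $h_{c,s}\in X_s(L_{c,s})$. Fix $\sigma\in\Gal(\overline{\Q}/L_{c,s})$. Since $\sigma$ fixes $\mu_{p^s}$, one has $\epsilon_\cyc(\sigma)\in 1+p^s\Z_p$, and the defining relation $\vartheta^2=\epsilon_\cyc$ forces $\vartheta(\sigma)\equiv\pm 1\pmod{p^s}$. Lemma \ref{yoke} tells us both that $E_{c,s}$ and $\fn_{c,s}$ are defined over $H_{cp^s}\subset L_{c,s}$, and that $\pi_{c,s}^\sigma=\vartheta(\sigma)\pi_{c,s}=\pm\pi_{c,s}$ up to the $\{\pm 1\}$-action of $\Aut(E_{c,s})$. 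Either sign gives a triple isomorphic to $(E_{c,s},\fn_{c,s},\pi_{c,s})$ via the $-1$ automorphism, so $\sigma$ fixes $h_{c,s}$ as a point of $X_s$.

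For the action formula I would take $\sigma\in\Gal(L_{c,s}/H_{cp^s})$, choose any lift $\tilde\sigma\in\Gal(\overline{\Q}/H_{cp^s})$, and reapply Lemma \ref{yoke} to obtain $\pi_{c,s}^{\tilde\sigma}=\vartheta(\tilde\sigma)\pi_{c,s}$ modulo $\{\pm 1\}$. By the moduli-theoretic definition of the diamond operator on $\Gamma_1(p^s)$-structures, this is exactly the identity $h_{c,s}^{\tilde\sigma}=\langle\vartheta(\tilde\sigma)\rangle\cdot h_{c,s}$ in $X_s(\overline{\Q})$. Independence of the chosen lift is immediate from the first paragraph (two lifts differ by an element that already fixes $h_{c,s}$) together with the triviality of $\langle-1\rangle$ on $X_s$, which also makes $\langle\vartheta(\tilde\sigma)\rangle$ well-defined despite the $\{\pm 1\}$-ambiguity in $\vartheta$.

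The one real difficulty is not hard computation but careful bookkeeping: keeping the $\{\pm 1\}$-ambiguity in $\vartheta$ consistent with the one coming from $\Aut(E_{c,s})$, and verifying at each step that the resulting sign sits inside a subgroup acting trivially on $X_s$. Once this is in place, the corollary is essentially a repackaging of Lemma \ref{yoke} on the moduli side.
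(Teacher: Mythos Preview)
Your proposal is correct and follows essentially the same approach as the paper's proof: both deduce the display equation from Lemma~\ref{yoke} and obtain the rationality statement $h_{c,s}\in X_s(L_{c,s})$ from the observation that $\epsilon_\cyc(\sigma)\equiv 1\pmod{p^s}$ forces $\vartheta(\sigma)\equiv\pm 1\pmod{p^s}$, so that $\langle\vartheta(\sigma)\rangle$ acts trivially on $X_s$. The only difference is organizational---the paper first asserts that the display holds for all $\sigma\in\Gal(\overline{\Q}/H_{cp^s})$ and then specializes, whereas you treat rationality first---and you are more explicit than the paper about tracking the $\{\pm 1\}$ ambiguities, which is harmless and arguably clearer.
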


\begin{proof}
Indeed, Lemma \ref{yoke} asserts that (\ref{yoke lemma display}) holds for all $\sigma\in\Gal(\overline{\Q}/H_{cp^s})$.  Suppose $\sigma$ fixes $L_{c,s}$.  Then
$$
\epsilon_\cyc(\sigma)\equiv 1\pmod{p^s}
\implies
\vartheta(\sigma)\equiv \pm 1\pmod{p^s},
$$ 
and so $\langle \vartheta(\sigma)\rangle$ acts trivially on $X_s$.
\end{proof}

  As in \cite[\S 7.2 Lemma 1]{hida-book} we may define the ordinary projector $e^\ord=\lim U_p^{m!}$ acting on the Picard group $\mathrm{Pic}(X_{s /L_{c,s}})\otimes \co_F$. The natural short exact sequence
$$
0\map{}J_s(L_{c,s})\otimes \co_F  \map{}  \mathrm{Pic}( X_{s /L_{c,s}})\otimes \co_F  \map{\deg }\co_F\map{}0,
$$
is Hecke equivariant, and as the action of $U_p$ on  $\co_F$ is by $p=\deg(U_p)$ there is an induced isomorphism
$$
J_s(L_{c,s})^\ord\iso \mathrm{Pic}( X_{s /L_{c,s}})^\ord.
$$
Here we abbreviate $$J_s(L)^\ord=e^\ord(J_s(L)\otimes\co_F)$$ for any finite extension $L/\Q$, and similarly for the Picard group.   Viewing $h_{c,s}$ as divisor on $X_{s /L_{c,s}}$ we obtain an element 
$$e^\ord h_{c,s}\in J_s(L_{c,s})^\ord.$$

If $\sigma\in \Gal(\overline{\Q}/H_{cp^s})$ then $\sigma$ fixes $\Q(\sqrt{p^*})$, and hence there is a $\xi\in \mu_{p-1}$ for which $\xi^2=\epsilon_\tame(\sigma)$.  This implies $\xi\cdot \epsilon_\wild^{1/2}(\sigma)=\pm \vartheta(\sigma)$, and so 
$$
\Theta(\sigma)=\xi^{ k+j-2}  \langle   \epsilon^{1/2}_\wild(\sigma) \rangle=
\langle \xi \cdot \epsilon_\wild^{1/2} (\sigma)\rangle = \langle\vartheta(\sigma)\rangle
$$
as endomorphisms of  $e_{k+j-2} J_s(L_{c,s})^\ord$.  If we define 
$$
y_{c,s}=e_{k+j-2}e^\ord h_{c,s}\in J_s(L_{c,s})^\ord
$$ 
for every $s>0$ then Corollary \ref{yoke lemma} implies that 
\begin{equation}\label{jacobian twist}
y_{c,s}^\sigma=\Theta(\sigma) \cdot y_{c,s}
\end{equation}
for all $\sigma\in \Gal(\overline{\Q}/H_{cp^s})$.
Let $\hecke_{2,s}^{\ord,\dagger}$ denote  $\hecke_{2,s}^\ord$ as a module over itself but with $G_\Q$ acting through the character $ \Theta^{-1}$, and   let $\zeta_s\in \hecke_{2,s}^{\ord,\dagger}$ be the element corresponding to $1\in \hecke_{2,s}^\ord$ under the identification of underlying $\hecke_{2,s}^\ord$-modules. For any $\hecke_{2,s}^\ord$-module $M$ we abbreviate
$$
M\otimes\zeta_s=M\otimes_{\hecke_{2,s}^\ord} \hecke_{2,s}^{\ord,\dagger}.
$$
The equality (\ref{jacobian twist})  implies that
$$
y_{c,s}\otimes\zeta_s\in H^0(H_{cp^s}, J_s(L_{c,s})^\ord\otimes\zeta_s)
$$
and we define
$$
x_{c,s}=\mathrm{Cor}_{H_{cp^s}/H_c} (y_{c,s}\otimes\zeta_s) \in H^0(H_{c}, J_s(L_{c,s})^\ord\otimes\zeta_s)
$$
where $\mathrm{Cor}_{H_{cp^s}/H_c}$ is corestriction. More explicitly, if for each $\eta\in\Gal(H_{cp^s}/H_c)$ we fix an extension to $\Gal(L_{c,s}/H_c)$ then
\begin{equation}\label{medium heegner}
x_{c,s}=\left(\sum_{\eta\in \Gal(H_{cp^s}/H_c)} \Theta(\eta^{-1}) \cdot  y_{c,s}^\eta\right) \otimes\zeta_s\in J_s(L_{c,s})^\ord\otimes\zeta_s.
\end{equation}

We now construct a twisted Kummer map
$$
\mathrm{Kum}_s: H^0(H_{c}, J_s(L_{c,s})^\ord\otimes\zeta_s)
\map{}
H^1(\mathfrak{G}_c,  \mathrm{Ta}_p^\ord(J_s) \otimes\zeta_s).
$$
  Suppose $P\otimes\zeta_s\in J_s(L_{c,s})^\ord\otimes\zeta_s$ is fixed by the action of $\Gal(\overline{\Q}/H_{c})$.  For each $n>0$ choose a finite extension $L/L_{c,s}$ contained in $H_c^{(Np)}$ large enough so that there is a point $Q_n \in J_s(L)^\ord$ with $p^nQ_n=P$.  Abbreviating
  $$
  J_s[p^n]^\ord= e^\ord(J_s[p^n]\otimes\co_F),
  $$
for  $\sigma\in \mathfrak{G}_c$
\begin{eqnarray*}
b_n(\sigma)&=&(Q_n \otimes\zeta_s)^\sigma-Q_n\otimes\zeta_s\\
&=& \big(\Theta^{-1}(\sigma)Q_n^\sigma-Q_n\big) \otimes\zeta_s
\end{eqnarray*}
defines a $1$-cocycle with values in 
$$
J_s[p^n]^\ord \otimes\zeta_s\iso (\mathrm{Ta}_p^\ord(J_s)/p^n\mathrm{Ta}_p^\ord(J_s)) \otimes\zeta_s
$$
whose image in cohomology  does not depend on the choice of $L$ or $Q_n$.  Taking the inverse limit over $n$ yields the desired element 
$$
\mathrm{Kum}_s(P\otimes\zeta_s)=\mil b_n \in H^1(\mathfrak{G}_c,  \mathrm{Ta}_p^\ord(J_s) \otimes\zeta_s).
$$  
The twisted Kummer map is both $\hecke_{2,s}^\ord$ and $G_\Q$ equivariant.   Define
\begin{equation}\label{twisted norm}
\mathfrak{X}_{c,s}= \mathrm{Kum}_s(x_{c,s}) \in H^1(\mathfrak{G}_c,  \mathrm{Ta}_p^\ord(J_s) \otimes\zeta_s).
\end{equation}

The Albanese map $\alpha_*: J_{s+1}  \map{} J_s  $ induces a map of $\hecke^\ord$-modules (abusively denoted the same way)
$$
\alpha_*: \mathrm{Ta}_p^\ord(J_{s+1}) \otimes\zeta_{s+1}\map{} \mathrm{Ta}_p^\ord(J_s) \otimes\zeta_s
$$  
defined by $t\otimes \zeta_{s+1}\mapsto \alpha_*(t)\otimes\zeta_s$.  Taking the inverse limit with respect to $\alpha_*$ we obtain an $\hecke^\ord$-module
$$
\mathbf{Ta}^\ord\otimes\zeta  =  \mil \left(\mathrm{Ta}_p^\ord(J_s) \otimes\zeta_s\right)
$$
which is precisely the module $\mathbf{Ta}^\ord$ constructed earlier, but with the $G_\Q$ action twisted by $\Theta^{-1}$.  In particular there is a $G_\Q$ equivariant map of $\hecke^\ord$-modules
\begin{equation}\label{twisted quotient}
\mathbf{Ta}^\ord\otimes\zeta\map{}\FT^\dagger
\end{equation}

\begin{Def}\label{Def:big}
By Lemma \ref{Euler 0} below we may take the limit over $s$ of the  $\mathfrak{X}_{c,s}$ and form the cohomology class
\begin{equation}\label{heegner limit}
\mil U_p^{-s}\mathfrak{X}_{c,s}\in H^1(\mathfrak{G}_c,\mathbf{Ta}^\ord\otimes\zeta).
\end{equation}
Define the  \emph{big Heegner point of conductor $c$}
$$
\mathfrak{X}_c\in H^1(\mathfrak{G}_c,\FT^\dagger)
$$
to be the image of the cohomology class (\ref{heegner limit}) under the map on cohomology induced by (\ref{twisted quotient}).
\end{Def}

\begin{Lem}\label{Euler 0}
The cohomology classes (\ref{twisted norm})  satisfy
 $
 \alpha_*(\mathfrak{X}_{c,s+1})=U_p\cdot \mathfrak{X}_{c,s}.
 $
\end{Lem}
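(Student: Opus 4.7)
The plan is to trace the asserted identity back through the construction of $\mathfrak{X}_{c,s}$ and reduce it to a classical norm-compatibility for Heegner divisors on the tower $\{X_s\}$.

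First I would exploit naturality of the twisted Kummer map. The degeneracy-induced map $\alpha_*\colon \Ta_p^\ord(J_{s+1})\otimes\zeta_{s+1}\to\Ta_p^\ord(J_s)\otimes\zeta_s$ is $G_\Q$-equivariant and, by compatibility of the Albanese actions under $\alpha$, it intertwines the two $U_p$'s. Hence $\alpha_*\circ\mathrm{Kum}_{s+1}=\mathrm{Kum}_s\circ\alpha_*$ on sections fixed by $G_{H_c}$, and it suffices to prove
$$
\alpha_*(x_{c,s+1}) \;=\; U_p\cdot x_{c,s}
$$
in $H^0(H_c,J_s(L_{c,s+1})^\ord\otimes\zeta_s)$.

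Next I would unwind the corestrictions. Since $\alpha_*$ is $G_\Q$-equivariant it commutes with $\mathrm{Cor}$, and $\mathrm{Cor}_{H_{cp^{s+1}}/H_c}=\mathrm{Cor}_{H_{cp^s}/H_c}\circ\mathrm{Cor}_{H_{cp^{s+1}}/H_{cp^s}}$. Combining these with the explicit formula (\ref{medium heegner}) and the transformation rule (\ref{jacobian twist}), the twist by $\Theta$ cancels across the inner trace and the claim reduces to the untwisted divisor identity
$$
\mathrm{Tr}_{H_{cp^{s+1}}/H_{cp^s}}\bigl(\alpha(h_{c,s+1})\bigr) \;=\; U_p\cdot h_{c,s}
$$
in the ordinary part of $J_s$ over $H_{cp^s}$; the idempotents $e^\ord$ and $e_{k+j-2}$ commute with everything in sight and are applied at the end.

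For the geometric identity I would use CM. The image $\alpha(h_{c,s+1})=(E_{c,s+1},\fn_{c,s+1},p\pi_{c,s+1})$, viewed on $X_s$, is the source of the degree-$p$ isogeny $\phi\colon E_{c,s+1}\to E_{c,s}$ coming from $\co_{cp^{s+1}}\subset\co_{cp^s}$, and $\phi$ carries $p\pi_{c,s+1}$ to $\pi_{c,s}=c\varpi$ up to $\Aut(E_{c,s})=\{\pm 1\}$. The Hecke correspondence $U_p$ acting on $h_{c,s}$ sums over cyclic order-$p$ subgroups of $E_{c,s}$ compatible with the level-$N$ and $\Gamma_1(p^s)$-structures, and the main theorem of complex multiplication --- applied exactly as in Lemma \ref{yoke} --- shows that $\Gal(H_{cp^{s+1}}/H_{cp^s})$ permutes the lifts of $\phi$ simply transitively, reproducing the $U_p$-sum.

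The main obstacle is this last step: one must check that the Galois orbit of $\alpha(h_{c,s+1})$ over $H_{cp^s}$ accounts for each summand of $U_p\cdot h_{c,s}$ with multiplicity exactly one, regardless of the splitting behavior of $p$ in $K$. Ordinarity is essential here: the projector $e^\ord$ annihilates the contribution of the connected (non-\'etale) part of $E_{c,s}[p]$, so that the count of contributing isogenies on the Hecke side matches $\#\Gal(H_{cp^{s+1}}/H_{cp^s})$ on the Galois side. Compatibility of the $\Gamma_1(p^s)$-structures under these isogenies is then routine from the definitions of $\pi_{c,s}$ and $\pi_{c,s+1}$.
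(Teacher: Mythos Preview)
Your reduction strategy --- naturality of the twisted Kummer map, compatibility with corestriction, and reduction to a divisor identity on $X_s$ --- is exactly the paper's approach. But the last step, where the actual geometry happens, contains a genuine error.

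First, a minor point: the isogeny goes the other way. The inclusion $\co_{cp^{s+1}}\subset\co_{cp^s}$ gives $\phi:E_{c,s+1}\to E_{c,s}$, and under $\phi$ one has $p\pi_{c,s+1}\mapsto p\pi_{c,s}$, which has order $p^{s-1}$; so $\phi$ does \emph{not} carry the $\Phi_s$-structure on $\alpha(h_{c,s+1})$ to that on $h_{c,s}$. The correct isogeny is $f_s:E_{c,s}\to E_{c,s+1}$ induced by $z\mapsto pz$ on $\C/\co_{cp^s}\to\C/\co_{cp^{s+1}}$: this takes $\pi_{c,s}$ to $p\pi_{c,s+1}$, hence gives an isogeny $h_{c,s}\to\alpha(h_{c,s+1})$ of elliptic curves with $\Phi_s$-level structure.

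Second, and this is the substantive gap, your appeal to the ordinary projector is misplaced. The identity
\[
U_p\cdot h_{c,s}\;=\;\sum_{\sigma\in\Gal(H_{cp^{s+1}}/H_{cp^s})}\alpha(h_{c,s+1}^\sigma)
\]
holds \emph{exactly as divisors} on $X_{s/L_{c,s}}$, before applying $e^\ord$ or $e_{k+j-2}$. Both sides have exactly $p$ terms: the correspondence $U_p$ (not $T_p$) has degree $p$, and $[H_{cp^{s+1}}:H_{cp^s}]=p$ for every $s\ge 1$, independently of the splitting behaviour of $p$ in $K$. The real content of the argument --- which you do not supply --- is that the $p$ Galois conjugates are pairwise distinct and exhaust the $U_p$-sum. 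The paper obtains this from the main theorem of complex multiplication: as $\sigma$ runs over $\Gal(H_{cp^{s+1}}/H_{cp^s})$, the complex tori $E_{c,s+1}^\sigma(\C)$ are $\C/\mathfrak{b}$ for $\mathfrak{b}$ ranging over representatives of $\ker\big(\mathrm{Pic}(\co_{cp^{s+1}})\to\mathrm{Pic}(\co_{cp^s})\big)$, hence are pairwise non-isomorphic. The $p$ isogenies $f_s^\sigma:h_{c,s}\to\alpha(h_{c,s+1}^\sigma)$ therefore have distinct kernels, and since there are only $p$ degree-$p$ isogenies from $h_{c,s}$ compatible with its $\Phi_s$-structure, these must be all of them. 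There is no ``connected part'' to kill and no discrepancy for $e^\ord$ to repair; the projector is applied afterwards only because the target module is $J_s(L_{c,s})^\ord$.
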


\begin{proof}
The extensions $H_{cp^s}(\mu_{p^\infty})$  and $H_{cp^{s+1}}$ of $H_{cp^s}$ are linearly disjoint, hence we may fix a set $S\subset \Aut(\C/H_{cp^s})$ of extensions of $\Gal(H_{cp^{s+1}}/H_{cp^s})$ in such a way that each $\sigma\in S$ acts trivially on $\mu_{p^\infty}$. 
The map of complex tori 
$$
\C/\co_{cp^s}\map{}\C/\co_{cp^{s+1}}
$$ 
defined by $z\mapsto pz$ determines a $p$-isogeny  $f_s:E_{c,s}\map{}E_{c,s+1}$ taking $\fn_{c,s}$ to $\fn_{c,s+1}$  and $\pi_{c,s}$ to $p\cdot \pi_{c,s+1}$.  That is, $f_s$ determines  an isogeny $ f_s: h_{c,s}\map{}\alpha(h_{c,s+1})$  of elliptic curves over $\C$ with $\Phi_s$ level structure.  By Corollary \ref{yoke lemma} each $\sigma\in S$ fixes $h_{c,s}$,  while the main theorem of complex multiplication implies that the complex tori $E_{c,s+1}^\sigma(\C)$  as $\sigma$ ranges over $S$ are isomorphic to $\C/\mathfrak{b}$ as $\mathfrak{b}$ ranges over a set of representatives for the kernel of $\mathrm{Pic}(\co_{cp^{s+1}})\map{}\mathrm{Pic}(\co_{cp^s})$.   The set
$$
\{\C/\mathfrak{b}\mid \mathfrak{b}\in \mathrm{ker}(\mathrm{Pic}(\co_{cp^{s+1}})\map{}\mathrm{Pic}(\co_{cp^s}))\}
$$
consists of $p$ nonisomorphic complex elliptic curves, and so as $\sigma$ varies over $S$ the elliptic curves  $E_{c,s+1}^\sigma$ are pairwise nonisomorphic. Hence the  isogenies  
$$
\{ f^\sigma_s:E_{c,s}\map{}\alpha(E_{c,s+1}^\sigma)   \mid \sigma\in S\}
$$ 
have distinct kernels and we obtain $p$ distinct degree $p$ isogenies of elliptic curves with $\Phi_s$ level structure
$$
\{ f^\sigma_s:h_{c,s}\map{}\alpha(h_{c,s+1}^\sigma)   \mid \sigma\in S\}.
$$ 
   By definition of the $U_p$ correspondence
\begin{equation}\label{Euler I equ 2}
U_p\cdot h_{c,s}= \sum_{\sigma\in S}\alpha (h_{c,s+1}^\sigma) 
\end{equation}
as divisors on $X_{s/L_{c,s}}$.  Applying $e_{k+j-2}\cdot e^\ord$ we obtain the equality in $J_s(L_{c,s})^\ord$
$$
U_p \cdot y_{c,s}=\sum_{\sigma\in S}\alpha_* (y_{c,s+1}^\sigma).
$$
Twisting by $\zeta_s$ we obtain the equality in $H^0(H_{cp^s}, J_s(L_{c,s})^\ord\otimes\zeta_s)$
\begin{eqnarray}\label{p-part Euler display}
U_p (y_{c,s} \otimes\zeta_s )&=&
 \sum_{\sigma\in S}\alpha_* (y_{c,s+1}^\sigma) \otimes\zeta_s \\
&=&\nonumber
  \sum_{\sigma\in S}\alpha_* (y_{c,s+1} \otimes\zeta_{s+1})^\sigma \\
  &=&\nonumber
  \alpha_*(\mathrm{Cor}_{H_{cp^{s+1}}/H_{cp^s}}  (y_{c,s+1} \otimes\zeta_{s+1})).
\end{eqnarray}
Corestricting from $H_{cp^s}$ to $H_c$ shows that
$$
U_p \cdot x_{c,s} = \alpha_*(x_{c,s+1})
$$
and applying the twisted Kummer map to both sides gives
$$
U_p\cdot\mathfrak{X}_{c,s} = \mathrm{Kum}_s(\alpha_*(x_{c,s+1}))
=\alpha_* (\mathrm{Kum}_{s+1}  (x_{c,s+1}))=\alpha_*(\mathfrak{X}_{c,s+1}).
$$
\end{proof}


\subsection{Euler system relations}
\label{ss:esr}


Keep the notation of \S \ref{ss:construction}.  In particular  $c$ always denotes a positive integer prime to $N$  and $\mathfrak{X}_c\in H^1(H_c,\FT^\dagger)$ is the inflation to $\Gal(\overline{\Q}/H_c)$-cohomology of the big Heegner point of Definition \ref{Def:big}.

\begin{Prop}\label{Prop:ESrelations}
Corestriction from $H_{cp}$ to $H_c$ takes $$\mathfrak{X}_{cp}\mapsto U_p\cdot \mathfrak{X}_{c}.$$ For any prime  $\ell\nmid c N$ which is inert in $K$, corestriction from $H_{c\ell}$ to $H_c$ takes $$\mathfrak{X}_{c\ell}\mapsto T_\ell\cdot  \mathfrak{X}_c.$$
\end{Prop}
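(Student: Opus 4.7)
The plan is to prove each identity at each finite level $s$ and then pass to the limit defining $\mathfrak{X}_c$ in Definition \ref{Def:big}.  For the $p$-relation I would start from the observation that $h_{cp,s} = \alpha(h_{c,s+1})$ as points of $X_s$: both parametrize the complex torus $\C/\co_{cp^{s+1}}$ with the same $\fn$-subgroup, and $\pi_{cp,s} = cp\varpi$ equals $p\cdot \pi_{c,s+1}$, which is the image of $\pi_{c,s+1}$ under $\alpha$.  Applying $e_{k+j-2}e^\ord$ gives $y_{cp,s} = \alpha_*(y_{c,s+1})$, and since $\alpha_*(P\otimes\zeta_{s+1}) = \alpha_*(P)\otimes\zeta_s$ by construction while $\alpha_*$ is $G_\Q$-equivariant, the transitivity of corestriction quickly yields
\begin{equation*}
\mathrm{Cor}_{H_{cp}/H_c}(x_{cp,s}) = \alpha_*(x_{c,s+1}).
\end{equation*}
Combined with the identity $\alpha_*(x_{c,s+1}) = U_p\cdot x_{c,s}$ established in the course of the proof of Lemma \ref{Euler 0} and the Galois/Hecke-equivariance of $\mathrm{Kum}_s$, this produces $\mathrm{Cor}_{H_{cp}/H_c}(\mathfrak{X}_{cp,s}) = U_p\cdot \mathfrak{X}_{c,s}$; taking the inverse limit with the $U_p^{-s}$ normalization of Definition \ref{Def:big} absorbs one factor of $U_p$ to give the claimed formula.

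For the $\ell$-relation the central step is the divisor identity
\begin{equation*}
T_\ell\cdot h_{c,s} \;=\; \sum_{\sigma\in S} h_{c\ell,s}^\sigma
\end{equation*}
on $X_{s/L_{c,s}}$, where $S\subset \Aut(\overline{\Q}/H_{cp^s})$ is a set of lifts of $\Gal(H_{c\ell p^s}/H_{cp^s})$ chosen to act trivially on $\mu_{p^\infty}$.  Such lifts exist because $H_{c\ell p^s}/H_{cp^s}$ is ramified only at primes above $\ell$ while $H_{cp^s}(\mu_{p^\infty})/H_{cp^s}$ is ramified only above $p$, so the two extensions are linearly disjoint over $H_{cp^s}$.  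I would prove the identity along the pattern of Lemma \ref{yoke}: class field theory identifies $\Gal(H_{c\ell p^s}/H_{cp^s})$ with the kernel of $\mathrm{Pic}(\co_{c\ell p^s})\to \mathrm{Pic}(\co_{cp^s})$, which has order $\ell+1$ since $\ell$ is inert in $K$; the main theorem of complex multiplication then identifies the $\ell+1$ Galois conjugates $E_{c\ell,s}^\sigma$ with the $\ell+1$ quotients $E_{c,s}/D$ by $\F_\ell$-lines $D\subset E_{c,s}[\ell]\iso \F_{\ell^2}$.  Since each quotient isogeny has degree $\ell$ coprime to $Np$, it carries $\fn_{c,s}$ and $\pi_{c,s}$ isomorphically onto $\fn_{c\ell,s}^\sigma$ and $\pi_{c\ell,s}^\sigma$ (up to $\Aut(E_{c\ell,s}^\sigma)=\{\pm 1\}$, absorbed into the Jacobian), matching the $\ell+1$ conjugates with the summands of the $T_\ell$ correspondence applied to $h_{c,s}$.

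Because every $\sigma\in S$ acts trivially on $\mu_{p^\infty}$, $\epsilon_\cyc(\sigma)=1$ and hence $\Theta(\sigma)=1$, so the $\zeta_s$-twist is transparent for these lifts and applying $e_{k+j-2}e^\ord$ to the divisor identity gives
\begin{equation*}
T_\ell\cdot (y_{c,s}\otimes\zeta_s) \;=\; \sum_{\sigma\in S}(y_{c\ell,s}\otimes\zeta_s)^\sigma \;=\; \mathrm{Cor}_{H_{c\ell p^s}/H_{cp^s}}(y_{c\ell,s}\otimes\zeta_s).
\end{equation*}
Corestricting from $H_{cp^s}$ to $H_c$ on both sides and applying $\mathrm{Kum}_s$ yields the finite-level relation $T_\ell\cdot \mathfrak{X}_{c,s} = \mathrm{Cor}_{H_{c\ell}/H_c}(\mathfrak{X}_{c\ell,s})$, and passing to the limit gives the claim.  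The main obstacle is the divisor identity itself: it rests on an application of the main theorem of complex multiplication to identify the $\ell+1$ Galois conjugates of $E_{c\ell,s}$ with the $\ell+1$ quotients appearing in the $T_\ell$-divisor of $h_{c,s}$, together with a careful verification that the prime-to-$\ell$ level structures correspond correctly under the quotient isogenies.
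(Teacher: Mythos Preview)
Your proposal follows essentially the same route as the paper's proof: for the $p$-relation you start from $h_{cp,s}=\alpha(h_{c,s+1})$, trace through to $\mathrm{Cor}_{H_{cp}/H_c}(x_{cp,s})=\alpha_*(x_{c,s+1})=U_p\cdot x_{c,s}$ via Lemma~\ref{Euler 0}, apply $\mathrm{Kum}_s$, and pass to the limit; for the $\ell$-relation you establish the divisor identity $T_\ell\cdot h_{c,s}=\sum_{\sigma\in S}h_{c\ell,s}^\sigma$ (the paper's equation~(\ref{pre-congruence})) by the same combination of linear disjointness, the main theorem of complex multiplication, and the count $|\ker(\mathrm{Pic}(\co_{c\ell p^s})\to\mathrm{Pic}(\co_{cp^s}))|=\ell+1$, and then corestrict and apply $\mathrm{Kum}_s$ exactly as in the paper. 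One small caution: your remark that the $U_p^{-s}$ normalization ``absorbs one factor of $U_p$'' is not quite right---the finite-level identity $\mathrm{Cor}_{H_{cp}/H_c}(\mathfrak{X}_{cp,s})=U_p\cdot\mathfrak{X}_{c,s}$ passes directly to the limit as $\mathrm{Cor}_{H_{cp}/H_c}(\mathfrak{X}_{cp})=U_p\cdot\mathfrak{X}_c$ with no cancellation, since multiplying both sides by $U_p^{-s}$ leaves the extra $U_p$ intact.
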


\begin{proof}
Directly from the definition (\ref{first point}) we have the equality $h_{cp,s}=\alpha(h_{c,s+1})$ of points on $X_s(\C)$.  Tracing through the constructions of \S \ref{ss:construction}, and using (\ref{p-part Euler display}) for the third implication, gives
\begin{eqnarray*}\lefteqn{
y_{cp,s} = \alpha_*(y_{c,s+1})} \\
&\implies &\ y_{cp,s}\otimes\zeta_s =\alpha_*(y_{c,s+1}\otimes\zeta_{s+1}) \\
&\implies& \ \mathrm{Cor}_{H_{cp^{s+1}}/H_{cp^s}} (y_{cp,s}\otimes\zeta_s)
= \alpha_*( \mathrm{Cor}_{H_{cp^{s+1}}/H_{cp^s}} (y_{c,s+1}\otimes\zeta_{s+1}) ) \\
&\implies& \ 
\mathrm{Cor}_{H_{cp^{s+1}}/H_{cp^s}} (y_{cp,s}\otimes\zeta_s)
= U_p\cdot  (y_{c,s}\otimes\zeta_s) \\
&\implies& \ 
\mathrm{Cor}_{H_{cp^{s+1}}/H_{c}} (y_{cp,s}\otimes\zeta_s)
= U_p\cdot \mathrm{Cor}_{H_{cp^s}/H_c} (y_{c,s}\otimes\zeta_s) \\
&\implies& \ \mathrm{Cor}_{H_{cp}/H_c} (x_{cp,s})=U_p\cdot x_{c,s}.
\end{eqnarray*}
As the twisted Kummer map is both Hecke and Galois equivariant we obtain 
$$
\mathrm{Cor}_{H_{cp}/H_c} (\mathfrak{X}_{cp,s})=U_p\cdot \mathfrak{X}_{c,s},
$$
and passing to the limit proves the first claim of the proposition.

Fix a prime $\ell$ as in the second claim of the proposition.
The map $z\mapsto \ell z$ induces an $\ell$-isogeny  $\C/\co_{cp^s}\map{}\C/\co_{c\ell p^s}$ of elliptic curves over $\C$, which determines an isogeny of elliptic curves with $\Phi_s$ level structure  
$$
f: h_{c,s}\map{}h_{c\ell,s}.
$$  
That is, $f$ takes the extra $\Phi_s$ level structure on $E_{c,s}$ isomorphically to that on $E_{c\ell,s}$.
 As in the proof of Lemma \ref{Euler 0} we may fix a subset $S\subset\Aut(\C/H_{cp^s})$ of extensions of  $\Gal(H_{c\ell p^s}/H_{cp^s})$ in such a way that each $\sigma\in S$  acts trivially on $\mu_{p^\infty}$. As $\sigma$ ranges over $S$ the complex tori $E^\sigma_{c\ell ,s}(\C)$ are given by $\C/\mathfrak{b}$ as $\mathfrak{b}$ ranges over representatives of the kernel of  $\mathrm{Pic}(\co_{c\ell p^s})\map{}\mathrm{Pic}(\co_{c p^s})$. The $\ell+1$ lattices $\mathfrak{b}$ which arise in this way are not $\C^\times$-homothetic, and therefore the $S$-conjugates of $E_{c\ell,s}$ are pairwise nonisomorphic.
 As $\sigma\in S$ varies we obtain $\ell+1$ distinct $\ell$-isogenies $f^\sigma:h_{c,s}\map{}h_{c\ell,s}^\sigma$ of elliptic curves with $\Phi_s$ level structure, giving the equality of divisors 
 \begin{equation}\label{pre-congruence}
  T_\ell\cdot h_{c,s} = \sum_{\sigma\in S} h_{c\ell,s}^\sigma.
 \end{equation}
From this we deduce
\begin{eqnarray*}\lefteqn{
   \sum_{\sigma\in S} y_{c\ell,s}^\sigma = T_\ell\cdot y_{c,s} }  \\
 &\implies&\  \sum_{\sigma\in S} (y_{c\ell,s}\otimes\zeta_s)^\sigma = T_\ell\cdot (y_{c,s} \otimes\zeta_s )\\
&\implies& \ \mathrm{Cor}_{H_{c\ell p^s}/H_{cp^s}} (y_{c\ell,s}\otimes\zeta_s) =  T_\ell\cdot (y_{c,s}\otimes\zeta_s)  \\
&\implies& \ 
\mathrm{Cor}_{H_{c\ell p^s} /H_c} (y_{c\ell ,s}\otimes\zeta_s)
= T_\ell \cdot  (x_{c,s}\otimes\zeta_s) \\
&\implies& \ 
\mathrm{Cor}_{ H_{c\ell} /H_c} (x_{c\ell ,s})
= T_\ell \cdot  x_{c,s} \\
&\implies& \ 
\mathrm{Cor}_{  H_{c\ell} /H_c} (\mathfrak{X}_{c\ell ,s})
= T_\ell \cdot  (\mathfrak{X}_{c,s}).
\end{eqnarray*}
Passing to the limit in $s$ proves the claim.
\end{proof}

\begin{Prop}\label{congruence}
Suppose $c$ is positive integer  which is prime to $N$,  and that $\ell\nmid cN$ is a prime which is inert in $K$.   Fix a prime $v$ of  $H_c$ above $\ell$ and let $w$ be a place of $\overline{\Q}$  above $v$.  If $\Frob_v\in \Gal(H_c/\Q)$ is the arithmetic Frobenius of $v$ then $\mathfrak{X}_{c\ell}$ and $\Frob_v(\mathfrak{X}_c)$ have the same image in $H^1(H_{c\ell,w},\FT^\dagger)$.
\end{Prop}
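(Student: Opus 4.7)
The plan is to lift the classical Euler-system congruence for Heegner points at inert primes to the big setting: establish the analogous reduction-mod-$w$ congruence between $y_{c\ell, s}$ and $\Frob_w(y_{c,s})$ at each finite level $s$, deduce an equality of Kummer classes in unramified local cohomology, and pass to the inverse limit along $\alpha_*$ before pushing forward through (\ref{twisted quotient}).

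First I would work at finite level $s$ and prove the congruence of points on $X_s$
\begin{equation*}
h_{c\ell, s} \equiv \Frob_w(h_{c, s}) \pmod{w}.
\end{equation*}
The geometric input is that $X_s$ has good reduction at $w$ (since $\ell \nmid Np$), and that $\ell$ being inert in $K$ implies $\ell$ is inert in each order $\co_{cp^s}$ and $\co_{c\ell p^s}$ (whose conductors are prime to $\ell$), so the reductions $\bar{E}_{c,s}$ and $\bar{E}_{c\ell, s}$ at $w$ are supersingular. The degree-$\ell$ isogeny $f : E_{c,s} \to E_{c\ell, s}$ appearing in the proof of Proposition \ref{Prop:ESrelations} (induced by $z \mapsto \ell z$ on complex tori) must then reduce to an $\ell$-isogeny between supersingular curves, which is forced to be the relative Frobenius up to the action of automorphisms. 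A careful check on the $\Phi_s$-level structures then yields the moduli-theoretic congruence. Applying $e_{k+j-2} \cdot e^\ord$, twisting by $\zeta_s$, and corestricting through $H_{c\ell p^s}/H_c$ (which is unramified at the place above $w$, since inert primes remain unramified in the anticyclotomic ring-class extensions $H_{c\ell p^s}/H_{cp^s}$ when the conductor is prime to $\ell$) yields the corresponding congruence $x_{c\ell, s} \equiv \Frob_v(x_{c,s}) \pmod{w}$ in $J_s \otimes \zeta_s$.

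Next, from points to cohomology: the representation $\mathrm{Ta}_p^\ord(J_s) \otimes \zeta_s$ is unramified at $w$ (since $X_s$ has good reduction at $\ell \neq p$ and $\Theta$ is unramified outside $p$), so the local Kummer images of $x_{c\ell,s}$ and $\Frob_v(x_{c,s})$ both land in the unramified subspace $H^1_{\mathrm{unr}}(H_{c\ell, w}, \cdot)$, on which a class is determined by its value on a Frobenius lift, i.e.\ by the reduction of the underlying Jacobian point. Compatibility of the twisted Kummer map with reduction mod $w$ then promotes the congruence of the points to the equality $\mathfrak{X}_{c\ell, s}|_w = \Frob_v(\mathfrak{X}_{c,s})|_w$ at level $s$. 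Taking the inverse limit over $s$ against $\alpha_*$ (as in Lemma \ref{Euler 0}) and pushing forward through (\ref{twisted quotient}) gives the claimed equality in $H^1(H_{c\ell, w}, \FT^\dagger)$. The main obstacle is the geometric congruence $h_{c\ell, s} \equiv \Frob_w(h_{c,s}) \pmod{w}$ at higher levels $s > 0$, where one must track the $p^s$-level structure $\pi_{c,s}$ through the reduction and match it against the Frobenius image of $\pi_{c\ell, s}$; this uses the characterization of $\ker(j_{c,s})$ as the $p^s\co_c$-torsion together with the fact that the isogeny $f$ restricts to an isomorphism of these kernels compatible with the Frobenius on the reduction. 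Once that moduli-theoretic fact is in hand, the remainder is formal manipulation in the spirit of Proposition \ref{Prop:ESrelations} and Lemma \ref{Euler 0}.
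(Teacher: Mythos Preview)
Your overall strategy is correct and runs parallel to the paper's. For the key geometric input---the congruence $\underline{h}_{c\ell,s}=\Frob_v(\underline{h}_{c,s})$ in the special fiber---you argue directly: the $\ell$-isogeny $f:E_{c,s}\to E_{c\ell,s}$ reduces to a purely inseparable map between supersingular curves, hence factors as relative Frobenius followed by an isomorphism, and chasing the $\Phi_s$-level structure through this factorization gives the result. The paper instead deduces the congruence from the Eichler--Shimura relation $T_\ell=\Frob_v+\langle\ell\rangle\ell\,\Frob_v^{-1}$ on $\underline{X}_{s/\mathbb{L}}$, combined with the total ramification of $(L_{c\ell,s})_w/(L_{c,s})_w$ (so that all $\ell+1$ conjugates of $h_{c\ell,s}$ collapse in the special fiber) and the identity $\Frob_v^2=\langle\ell\rangle$ on supersingular points. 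Your direct route is the classical one and is arguably cleaner; the paper's route avoids having to track the $\Gamma_1(p^s)$-structure explicitly through the factorization $\bar{f}=\psi\circ F_\ell$.

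There is, however, a genuine error in your sketch. The parenthetical assertion that $H_{c\ell p^s}/H_{cp^s}$ is unramified above $\ell$ is false: the conductor jumps from $cp^s$ to $c\ell p^s$, so the inert prime $\ell$ is \emph{totally ramified} of degree $\ell+1$ in this extension (the paper in fact exploits exactly this total ramification). Fortunately the claim is inessential. To pass from the congruence for $y_{c\ell,s}$ to one for $x_{c\ell,s}$ you do not need any unramifiedness; rather, note that your direct argument applies equally to $f^\tau:E_{c,s}^\tau\to E_{c\ell,s}^\tau$ for every $\tau$ representing $\Gal(H_{cp^s}/H_c)\cong\Gal(H_{c\ell p^s}/H_{c\ell})$, giving $y_{c\ell,s}^\tau\equiv\Frob_v(y_{c,s}^\tau)\pmod{w}$ for each such $\tau$, and then sum with the twist by $\Theta^{-1}$ as in (\ref{medium heegner}). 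This is precisely how the paper handles the passage from $y$ to $x$. The remaining steps (the local Kummer map factors through reduction, then take the limit in $s$) match the paper's argument, which spells out in detail why two points with the same reduction have the same local Kummer image.
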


\begin{proof}
Fix $s>0$ and recall $L_{c,s}=H_{cp^s}(\mu_{p^s})$. Set
$$
\Phi_0=(L_{c,s})_w  \hspace{1cm} \Phi=(L_{c\ell,s})_w
$$
 so that $\Phi/\Phi_0$ is totally ramified of degree $\ell+1$ and 
 $$
 \Gal(\Phi/\Phi_0)\iso\Gal(H_{c\ell p^s}/H_{cp^s}).
 $$   
 As in the proof of Proposition \ref{Prop:ESrelations}, fix a set $S\subset\Aut(\C/H_{cp^s})$ of representatives for $\Gal(H_{c\ell p^s}/H_{cp^s})$ in such a way  that each $\sigma\in S$ acts trivially on $\mu_{p^\infty}$, so that the elements of $S$ give a set of representatives for $\Gal(\Phi/\Phi_0)$.   Let $W$ denote the integer ring of $\Phi$, $\mathbb{L}$ the residue field of $\Phi$, and let $\underline{X}_s$ be the canonical (smooth, proper) integral model of $X_{s/\Phi}$ over $W$.  By the valuative criterion of properness any point $ x \in X_s(\Phi)$ extends to a point denoted $\underline{x}\in  \underline{X}_s(W)$.  
 
 Fix $\tau\in\Aut(\C/H_c)$.  The equality (\ref{pre-congruence}) gives
 $$
\big( \sum_{\sigma\in S} h^\sigma_{c\ell,s} \big)^\tau =T_\ell\cdot h^\tau_{c,s}
 $$
as divisors on $X_{s/L_{c\ell,s}}$. First replacing $S$ by $\tau  S\tau^{-1}$ and then changing base to $\Spec(\Phi)$ gives the equality of divisors
$$
\sum_{\sigma\in \Gal(\Phi/\Phi_0)} (h^\tau_{c\ell,s})^\sigma = T_\ell\cdot h^\tau_{c,s}
$$
on $X_{s/\Phi}$.  As $\Phi/\Phi_0$ is totally ramified this implies the equality of divisors in the special fiber $\underline{X}_{s/\mathbb{L}}$
 $$
(\ell+1)\cdot  \underline{h}^\tau_{c\ell,s/ \mathbb{L} }  =
T_\ell\cdot \underline{h}^\tau_{c,s/\mathbb{L}}
 $$
where we abbreviate
$$
\underline{h}^\tau_{c,s/  \mathbb{L}}= \underline{h^\tau_{c,s}}\times_{\Spec(W)}\Spec(\mathbb{L})
$$ 
and similarly with $c$ replaced by $c\ell$.  Using the Eichler-Shimura relation we deduce
$$
(\ell+1)\cdot  \underline{h}^\tau_{c\ell,s/\mathbb{L}} =
\Frob_v (\underline{h}^\tau_{c,s/\mathbb{L}})+ \langle\ell\rangle \ell\cdot  \Frob_v^{-1} (\underline{h}^\tau_{c,s/\mathbb{L}}).
$$
The elliptic curve underlying   $\underline{h}_{c,s}$ has supersingular reduction, and the action of $\Frob_v^2$ on supersingular points in the special fiber is by  $\langle \ell\rangle$ (see the proof of \cite[Lemma 4.1.1]{howard-cycles}). Therefore
$$
(\ell+1)\cdot  \underline{h}^\tau_{c\ell,s/\mathbb{L}} =
(\ell+1)  \cdot  \Frob_v (\underline{h}^\tau_{c,s/\mathbb{L}})
$$
and so $\underline{h}^\tau_{c\ell,s}$ and $\Frob_v (\underline{h}^\tau_{c,s})$ have the same reduction to the special fiber.  
Let $\underline{J}_s$ denote the N\'eron model of $J_s$ over $W$ and abbreviate
$$
\underline{J}_s(\mathbb{L})^\ord=e^\ord(\underline{J}_s(\mathbb{L})\otimes\co_F).
$$
As the reduction map $J_s(\Phi)\map{}\underline{J}_s(\mathbb{L})$ is Hecke equivariant, we deduce from the discussion above that $y^\tau_{c\ell,s}$ and $\Frob_v(y^\tau_{c,s})$ have the same image under
$$
J_s(L_{c\ell,s})^\ord\map{} \underline{J}_s(\mathbb{L})^\ord.
$$
Letting $\tau$ vary over a set of representatives for $\Gal(H_{c\ell p^s}/H_{c\ell})$, which also gives representatives for  $\Gal(H_{cp^s}/H_{c})$, the definition (\ref{medium heegner}) shows that  $x_{c\ell,s}$ and $\Frob_v(x_{c,s})$ have the same image under
\begin{equation}\label{jacobian reduction}
J_s(L_{c\ell,s})^\ord\otimes\zeta_s \map{} \underline{J}_s(\mathbb{L})^\ord\otimes\zeta_s.
\end{equation}

Let $\mathbb{F}$ denote the residue field of $H_{c\ell,w}$.  The next claim is that the kernel of
\begin{eqnarray}\lefteqn{ \label{local kummer}
H^0(H_{c\ell},  J_s(L_{c,s})^\ord \otimes\zeta_s)\map{\mathrm{Kum}_s}
H^1(H_{c\ell}, \Ta_p^\ord(J_s)\otimes\zeta_s) }  \hspace{2cm}   \\
& & \map{\mathrm{loc}_w}H^1(H_{c\ell,w},\Ta_p^\ord(J_s)\otimes\zeta_s)
\nonumber
\end{eqnarray}
contains the kernel of the map
\begin{equation}\label{residual kummer}
H^0(H_{c\ell},  J_s(L_{c,s})^\ord \otimes\zeta_s) \map{}H^0(\mathbb{F},  J_s(\mathbb{L})^\ord \otimes\zeta_s)
\end{equation}
induced by the reduction map (\ref{jacobian reduction}).   Indeed, suppose $P\otimes\zeta_s$ is in the kernel of (\ref{residual kummer}), and let $Q_n$ and $b_n$ be as in the definition of $\mathrm{Kum}_s$, so that the image  of $\mathrm{Kum}_s(P\otimes\zeta_s)$ in  
$H^1(H_{c\ell}, J_s[p^n]^\ord \otimes\zeta_s)$ is given by the cocycle
$$
b_n(\sigma)=(\Theta^{-1}(\sigma)Q_n^\sigma-Q_n)\otimes\zeta_s.
$$
Let $Q\mapsto\widetilde{Q}$ denotes the reduction at $w$ of points  on $J_s(L)^\ord$, where $L$ is large enough that $Q_n\in J_s(L)^\ord$, and extend this to $J_s(L)^\ord\otimes\zeta_s$ by 
$$
\widetilde{Q\otimes\zeta_s}=\widetilde{Q}\otimes\zeta_s.
$$
By hypothesis $\widetilde{P}=0$, and so $p^n\widetilde{Q}_n=0$.  Enlarging $L$ if needed,  we may find $R_n\in J_s(L)^\ord$  such that 
$$
p^nR_n=0\hspace{1cm}
\widetilde{R}_n=\widetilde{Q}_n.
$$  
Replacing $Q_n$ by $Q_n-R_n$ we may therefore assume that $\widetilde{Q}_n=0$.  This implies that for all $\sigma$ in the decomposition group of $w$
$$
\widetilde{b_n(\sigma)} = (\Theta^{-1}(\sigma)\widetilde{Q}_n^\sigma-\widetilde{Q}_n)\otimes\zeta_s=0.
$$
By the injectivity of the reduction map on torsion, we find that the restriction of $b_n$ to the decomposition group of $w$ is trivial.  As this holds for all $n$, 
$$
\mathrm{loc}_w(\mathrm{Kum}_s(P\otimes\zeta_s))=0.
$$

We have now shown that $x_{c\ell,s}$ and $\Frob_v(x_{c,s})$ have the same image under
 (\ref{jacobian reduction}), hence also under (\ref{residual kummer}), and hence also under the composition (\ref{local kummer}).  Therefore
$$
\mathrm{loc}_w(\mathfrak{X}_{c\ell,s})=
\mathrm{loc}_w(\mathrm{Kum}_s(x_{c\ell,s}))=
\mathrm{loc}_w(\mathrm{Kum}_s( \Frob_v(x_{c,s})  ))=
\mathrm{loc}_w(\Frob_v(\mathfrak{X}_{c,s})).
$$
The claim is now immediate from the construction of $\mathfrak{X}_c$ from $\mathfrak{X}_{c,s}$.
\end{proof}

Let $W_N$ be the usual Atkin-Lehner automorphism of $X_s$, defined on elliptic curves with $\Phi_s$ level structure by
$$
W_N\cdot (E,C,P)= (E/C, \ker(f^\vee) ,f(P))
$$ 
where  $f:E\map{}E/C$ is the quotient map and $f^\vee$ is the dual isogeny.  Note that $W_N^2=\langle N\rangle$. The induced (Albanese) action on $J_s$ commutes with the nebentype operators, the  Hecke operators $T_\ell$ with $\ell\nmid Np$, and the operator  $U_p$.  In particular $W_N$ commutes with the ordinary projector $e^\ord$, and so there is an induced action (still denoted $W_N$) on $\mathbf{Ta}^\ord$ satisfying $W_N^2=[N]$.  Note that this action need not be $\hecke^\ord$-linear, as $W_N$ does not commute with the operators $U_\ell$ for $\ell\mid N$.

\begin{Lem}\label{sign}
There is an $R$-linear automorphism $W_N$ of $\FT$ making the diagram (of $\Lambda$-modules)
$$
\xymatrix{
 \mathbf{Ta}^\ord \ar[r]^{W_N}  \ar[d]  & \mathbf{Ta}^\ord\ar[d]   \\
\FT \ar[r]^{ W_N } & \FT
}
$$
commute.  Furthermore there is a choice of $w=\pm 1$ for which the action of $W_N$ on $\FT$ is given by $W_N=w\Theta(-N)$.
\end{Lem}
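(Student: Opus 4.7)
My plan has two parts matching the two claims in the lemma.

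For the first claim, I would start from the observation that the Atkin-Lehner correspondence $W_N$ on $\mathbf{Ta}^\ord$ commutes with $T_\ell$ for $\ell\nmid Np$, with $U_p$, and with the nebentype operators; the only Hecke operators it fails to commute with are the $U_\ell$ for $\ell\mid N$. Conjugation by $W_N$ therefore defines an involution $\tau$ of $\hecke^\ord$. I would argue that $\tau$ preserves the maximal ideal $\mathfrak{m}$ because the residual Hecke eigensystem of $g$ is determined by the residual Galois representation (and hence by the $\tau$-fixed $T_\ell$ with $\ell\nmid Np$), and preserves the minimal prime $\mathfrak{a}$ because the primitive component of the Hida family to which $g$ belongs is likewise determined by those $T_\ell$. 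So the naive descent gives an endomorphism of $\FT = \mathbf{Ta}^\ord_\mathfrak{m}/\mathfrak{a}\mathbf{Ta}^\ord_\mathfrak{m}$ which is semilinear with respect to the induced involution $\bar{\tau}$ of $R$. To promote this to $R$-linearity one shows $\bar{\tau}$ is trivial on $R$: the image in $R$ of the subalgebra of $\hecke^\ord_\mathfrak{m}$ generated by $\{T_\ell:\ell\nmid Np\}$, $U_p$, and $\Lambda$ is already dense (even equal to $R$, by a Chebotarev density argument applied to the trace of $\mathbf{Ta}^\ord_\mathfrak{m}$), and each of these generators is $\tau$-fixed. This gives the $R$-linear automorphism $W_N$ of $\FT$ making the stated diagram commute.

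For the second claim, I would begin by noting that $W_N^2=\langle N\rangle$ on each $J_s$ and that in our weight-two normalization $\langle N\rangle=[N]$ as elements of $\hecke^\ord_\mathfrak{m}$. Combining $[-1]=1$ in $\hecke^\ord_\mathfrak{m}$ (which was established in the paper) with $\Theta^2=[\,\cdot\,]$ gives the algebraic identity
\[
W_N^2 = [N] = [-N] = \Theta(-N)^2
\]
as $R$-linear endomorphisms of $\FT$. Base-change to the fraction field $\mathcal{K}$ of $R$ and view $W_N$ as an element of $M_2(\mathcal{K})$. Since $\Theta(-N)\in R^\times$ and $p>2$, the factorization $X^2-\Theta(-N)^2=(X-\Theta(-N))(X+\Theta(-N))$ has distinct roots, so $W_N\otimes\mathcal{K}$ is diagonalizable with eigenvalues in $\{\pm\Theta(-N)\}$. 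The three possible characteristic polynomials are $(X\mp\Theta(-N))^2$ (scalar matrix) or $X^2-\Theta(-N)^2$ (two distinct eigenvalues).

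I would rule out the third possibility by specializing at any arithmetic prime $\mathfrak{p}\subset R$. Because the residual representation is absolutely irreducible, so is the lift $V_\mathfrak{p}=\FT\otimes_R F_\mathfrak{p}$; because $W_N$ is defined over $\Q$, the induced endomorphism of $V_\mathfrak{p}$ commutes with the action of $G_\Q$. Schur's lemma therefore forces $W_N|_{V_\mathfrak{p}}$ to be a scalar, which is incompatible with having distinct eigenvalues $\pm\Theta_\mathfrak{p}(-N)$. Hence $W_N\otimes\mathcal{K}=w\,\Theta(-N)\cdot I$ for some $w\in\{\pm 1\}$, and since $R\hookrightarrow\mathcal{K}$ is injective and $W_N\in\End_R(\FT)$, the same identity holds over $R$. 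The main obstacle is the descent argument in Part 1, specifically verifying that $\bar{\tau}$ is trivial on $R$ so that the descended $W_N$ is genuinely $R$-linear; Part 2 then follows from the Schur/specialization dichotomy essentially formally.
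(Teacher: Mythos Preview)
Your Part~2 is fine: once $W_N$ is known to be an $R$-linear endomorphism of $\FT$ with $W_N^2=\Theta(-N)^2$, specializing to an arithmetic prime and invoking Schur on the absolutely irreducible $V_\mathfrak{p}$ rules out the mixed-eigenvalue case.  The paper does this step slightly differently, proving absolute irreducibility of $\FT\otimes_R\mathcal{K}$ directly (using that complex conjugation has distinct eigenvalues $\pm 1$) and applying Schur over $\mathcal{K}$, but your route is equally valid.

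The real gap is in Part~1, and it occurs earlier than where you flag it.  Your opening move---``conjugation by $W_N$ defines an involution $\tau$ of $\hecke^\ord$''---presupposes that $W_N U_\ell W_N^{-1}$ lies in $\hecke^\ord$ for each $\ell\mid N$, i.e.\ that $W_N$ normalizes the Hecke algebra inside $\End(\mathbf{Ta}^\ord)$.  You give no argument for this, and the paper's explicit remark just before the lemma (``this action need not be $\hecke^\ord$-linear, as $W_N$ does not commute with the operators $U_\ell$ for $\ell\mid N$'') is precisely a warning that one cannot take this for granted.  Without $\tau$ defined on all of $\hecke^\ord$, the statements ``$\tau$ preserves $\mathfrak{m}$'' and ``$\tau$ preserves $\mathfrak{a}$'' are not meaningful, and the descent to $\FT$ is not yet in hand.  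Your subsequent Chebotarev claim (that the anemic operators already surject onto $R$) is true under the residual-irreducibility hypothesis, but it is a consequence of the theory of pseudo-representations rather than a bare density argument; and even granting it, one still has to show that $W_N$ preserves the kernel of $\mathbf{Ta}^\ord\to\FT$, which is not automatic from $R_0$-linearity alone.

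The paper circumvents all of this by, in effect, using your Part~2 idea to do the work of Part~1.  For each arithmetic prime $\mathfrak{p}$ of weight two and nontrivial wild character, $g_\mathfrak{p}$ is new of level $\Phi_s$, so $V_\mathfrak{p}$ appears with multiplicity one in $\Ta_p(J_s)\otimes F_\mathfrak{p}$; since $W_N$ commutes with $G_\Q$ it preserves that summand and acts on it as a scalar by Schur.  Hence $W_N h t - h W_N t$ dies in $V_\mathfrak{p}$ for every $h\in\hecke^\ord$, $t\in\mathbf{Ta}^\ord$.  As this holds for infinitely many arithmetic primes, Lemma~\ref{ring lemma} forces $W_N h t - h W_N t$ to die already in $\FT$, which simultaneously gives the descent and the $\hecke^\ord$-linearity without ever needing to understand $W_N U_\ell W_N^{-1}$.
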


\begin{proof}
Let $\mathfrak{p}$ be an arithmetic prime of weight two and nontrivial wild character of conductor $p^s$.  By \cite[Theorem 3.1(i)]{hida} the $\Gamma^{p^s}$ coinvariants of $\mathbf{Ta}^\ord$ are precisely $\Ta_p^\ord(J_s)$, and so the natural map $\mathbf{Ta}^\ord\map{} V_\mathfrak{p}$ factors as
$$
\mathbf{Ta}^\ord\map{} \Ta_p(J_s)\otimes_{\hecke^\ord}F_\mathfrak{p}\map{}V_\mathfrak{p}.
$$
The modular form $g_\mathfrak{p}$ is new of level $\Phi_s$ and so the Galois representation $V_\mathfrak{p}$ appears with multiplicity one as a summand of $\Ta_p(J_s)\otimes_{\hecke^\ord}F_\mathfrak{p}$. As the automorphism $W_N$ commutes with the Galois action it must preserve this summand.  Furthermore, as $V_\mathfrak{p}$ is absolutely irreducible the action of $W_N$ on this summand is through a scalar in $F_\mathfrak{p}$.  We now have a commutative diagram
$$
\xymatrix{
 \mathbf{Ta}^\ord \ar[r]^{W_N}  \ar[d]  & \mathbf{Ta}^\ord\ar[d]   \\
V_\mathfrak{p} \ar[r]^{ W_N } & V_\mathfrak{p}
}
$$
of $\Lambda$-modules in which the bottom arrow is $\hecke^\ord$-linear.  

For any $h\in \hecke^\ord$ and $t\in \mathbf{Ta}^\ord$ we have shown that the image of $W_N ht-hW_Nt$ in $V_\mathfrak{p}$ is trivial.  As this holds for all arithmetic primes of weight two and nontrivial wild character, Lemma \ref{ring lemma} implies that $W_N ht-hW_Nt$ has trivial image in $\FT$.  From this it follows that the automorphism $W_N$ of $\mathbf{Ta}^\ord$ factors through  to an $\hecke^\ord$-linear automorphism of $\FT$.   By \cite[Theorem 2.1]{hida} $\FT\otimes_R\mathcal{K}$ is an irreducible Galois  representation, and by Proposition \ref{Gorenstein} complex conjugation acts on $\FT\otimes_R\mathcal{K}$ with distinct eigenvalues $\pm 1$.  Combining these, it follows that $\FT\otimes_R\mathcal{K}$ is absolutely irreducible.  Therefore $W_N$ must act on $\FT$ through a scalar $\lambda\in R$ satisfying $\lambda^2=[N]$.  But  $\Theta^2(-N)=[N]$ in $R$, and so for some choice of sign $w=\pm 1$ we have $W_N=w \cdot \Theta(-N)$  as operators on $\FT$.  
\end{proof}

\begin{Rem}\label{functional ambiguity}
Note that as $W_N:\FT\map{}\FT$ does not depend on the choice of $\Theta$ made in Remark \ref{twist remark}, neither does $w\Theta(-N)$.  The two possible choices of $\Theta$ differ by $\omega^{\frac{p-1}{2}}$, and making a different choice multiplies  $w$ by $\omega^{\frac{p-1}{2}}(-N)$.
\end{Rem}

\begin{Prop}\label{parity}
Suppose $\tau\in \Gal(H_c/\Q)$ acts nontrivially on $K$. There is a $\sigma\in \Gal(H_c/K)$ such that 
$$
\mathfrak{X}_c^\tau =w  \cdot \mathfrak{X}_c^\sigma
$$
where $w=\pm 1$ is defined by Lemma \ref{sign}.
\end{Prop}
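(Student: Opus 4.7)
The plan is to reduce Proposition \ref{parity} to a geometric parity identity for the CM points $h_{c,s}$ on $X_s$, transport this identity through the construction of $\mathfrak{X}_c$, and then use Lemma \ref{sign} to convert the resulting $W_N$-action into the scalar $w$.

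Fix a lift $\tilde\tau\in\Gal(\overline{\Q}/\Q)$ of $\tau$. I would first establish, for each $s\ge 1$ and compatibly in $s$, a geometric identity
$$h_{c,s}^{\tilde\tau} = W_N\cdot h_{c,s}^{\tilde\sigma} \in X_s(L_{c,s})$$
for some $\tilde\sigma\in\Gal(\overline{\Q}/K)$ whose restriction to $H_c$ is the desired $\sigma\in\Gal(H_c/K)$. This encodes the classical phenomenon that on a Heegner point, complex conjugation agrees with the Atkin-Lehner involution up to a Galois translate: $\tilde\tau$ complex-conjugates the CM action on $E_{c,s}$ and sends $\fn_{c,s}$ to the $\overline{\mathfrak{N}}$-torsion, while $W_N$ exchanges $\fn_{c,s}$ with its complement in $E_{c,s}[N]$, which on the quotient curve $E_{c,s}/\fn_{c,s}$ is again associated to $\overline{\mathfrak{N}}$ since $\mathfrak{N}\overline{\mathfrak{N}}=N\co$. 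The proof is an explicit idelic calculation in the spirit of Lemma \ref{yoke}, which also pins down $\tilde\sigma$ finely enough to control the value $\Theta(\tilde\sigma)$ appearing below.

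Transporting this identity through the construction is then routine. Applying $e_{k+j-2}e^\ord$ (which commutes with $W_N$ and all Galois actions) gives $y_{c,s}^{\tilde\tau} = W_N\cdot y_{c,s}^{\tilde\sigma}$ in $J_s(L_{c,s})^\ord$. Tensoring with $\zeta_s$, on which $\Gal(\overline{\Q}/\Q)$ acts through $\Theta^{-1}$ while $W_N$, being a non-Galois correspondence, commutes with the twist, a short manipulation of scalars yields
$$(y_{c,s}\otimes\zeta_s)^{\tilde\tau} = \Theta(\tilde\sigma\tilde\tau^{-1})\cdot W_N\cdot (y_{c,s}\otimes\zeta_s)^{\tilde\sigma}.$$
Corestricting from $H_{cp^s}$ to $H_c$, applying the twisted Kummer map, taking the inverse limit over $s$, and projecting to $\FT^\dagger$ all preserve this relation, giving
$$\mathfrak{X}_c^\tau = \Theta(\tilde\sigma\tilde\tau^{-1})\cdot W_N\cdot\mathfrak{X}_c^\sigma \in H^1(\mathfrak{G}_c,\FT^\dagger).$$

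Finally, Lemma \ref{sign} identifies the action of $W_N$ on $\FT^\dagger$ with multiplication by the scalar $w\Theta(-N)$, so
$$\mathfrak{X}_c^\tau = w\cdot\Theta\bigl(\tilde\sigma\tilde\tau^{-1}\cdot\mathrm{Art}_\Q(-N)\bigr)\cdot\mathfrak{X}_c^\sigma.$$
It remains to verify that $\tilde\sigma\tilde\tau^{-1}\cdot\mathrm{Art}_\Q(-N)$ lies in the kernel of $\Theta$. This should follow from the explicit idelic form of $\tilde\sigma$ produced in Step 1: the degree-$N$ isogeny underlying $W_N$ contributes an idele whose $\Theta$-image exactly cancels the $\Theta(-N)$ factor coming from Lemma \ref{sign}. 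The main obstacle is precisely this Step 1 -- carrying out the idelic calculation carefully enough to identify $h_{c,s}^{\tilde\tau}$ with $W_N\cdot h_{c,s}^{\tilde\sigma}$ on the nose (absorbing the $\{\pm 1\}$ indeterminacy from $\Aut(E_{c,s})$) and to control $\Theta(\tilde\sigma)$ for the cancellation above; once this is in hand, the remaining steps are formal manipulations.
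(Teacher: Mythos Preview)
Your plan is correct and follows essentially the same route as the paper's proof. The paper fixes $\tau$ to be complex conjugation, carries out the idelic calculation of your Step~1 explicitly (choosing $x\in\widehat{K}^\times$ with $x^{-1}\co_{cp^s}=\mathfrak{N}\cap\co_{cp^s}$ and $x_p=-1$, and letting $\sigma$ be its Artin symbol) to obtain $W_N\cdot h_{c,s}^\sigma=h_{c,s}^\tau$, then transports through $e_{k+j-2}e^\ord$, the twist by $\zeta_s$, corestriction, the Kummer map, and the limit exactly as you outline; the scalar computation gives $\Theta(\sigma\tau^{-1})=\Theta^{-1}(-N)$, which cancels against the $w\Theta(-N)$ from Lemma~\ref{sign}.
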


\begin{proof}
We may assume that $\tau$ is the restriction of  the complex conjugation determined by the fixed embedding $\overline{\Q}\hookrightarrow\C$.  Set $\mathfrak{N}_{cp^s}=\mathfrak{N}\cap\co_{cp^s}$ so that the $N$-isogeny of complex tori $f:E_{c,s}(\C)\map{}(E_{c,s}/\mathfrak{n}_{c,s})(\C)$ is identified with
$$
\C/\co_{cp^s}\map{}\C/ \mathfrak{N}_{cp^s}^{-1},
$$
and the dual isogeny $f^\vee$ is identified with
$$
\C/ \mathfrak{N}_{cp^s}^{-1}\map{}\C/N^{-1}\co_{cp^s}.
$$
If we choose $x\in \widehat{K}^\times$ such that $x^{-1} \co_{cp^s}=\mathfrak{N}_{cp^s}$  and let $\sigma\in \Aut(\C/K)$ be such that the restriction of $\sigma$ to the maximal abelian extension of $K$ agrees with the Artin symbol of $x$, then the main theorem of complex multiplication \cite[Theorem 5.4]{shimura} gives the left square of the commutative diagram 
$$
\xymatrix{
 (E^\sigma_{c,s}/\mathfrak{n}_{c,s}^\sigma) (\C) \ar[r]\ar[d]_{(f^\vee)^\sigma} &  \C/x^{-1}\mathfrak{N}_{cp^s}^{-1}  \ar[r]^= \ar[d] &  \C/\tau(\co_{cp^s})  \ar[r]\ar[d]& E^\tau_{c,s} (\C) \ar[d]_{f^\tau} \\
E_{c,s}^\sigma (\C) \ar[r] &\C/x^{-1}N^{-1}\co_{cp^s} \ar[r]^= &  \C/\tau(\mathfrak{N}_{cp^s}^{-1}) \ar[r]
& (E_{c,s}/\mathfrak{n}_{c,s})^\tau(\C)
}
$$
in which all horizontal arrows are isomorphisms (the square on the right arises from an elementary argument using the Weierstrass $\wp$-function).  This implies that 
\begin{equation}\label{AL N moduli}
W_N\cdot (E_{c,s},\mathfrak{n}_{c,s})^\sigma \iso (E_{c,s}^\sigma/\mathfrak{n}_{c,s}^\sigma,\ker(f^\vee)^\sigma)\iso   (E^\tau_{c,s},\ker(f)^\tau )   \iso  (E_{c,s},\mathfrak{n}_{c,s})^\tau
\end{equation}
as elliptic curves with $\Gamma_0(N)$ level structure.    The elliptic curves $E_{c,s}^\sigma/\mathfrak{n}_{c,s}^\sigma$ and $E_{c,s}^\tau$ inherit from $E_{c,s}$  $\Gamma_1(p^s)$ level structures $f(\pi_{c,s})^\sigma$ and $\pi_{c,s}^\tau$ defined as the images of $\pi_{c,s}$ under the maps
$$
\ker(j_{c,s})\map{\sigma}E_{c,s}^\sigma \map{f^\sigma}E_{c,s}^\sigma/\mathfrak{n}_{c,s}^\sigma
\hspace{1cm}
\ker(j_{c,s})\map{\tau}E_{c,s}^\tau.
$$
Using the isomorphisms in the top row of the diagram above, these images correspond to the images of $c\varpi$ under
$$
\co_c/\co_{cp^s} \map{x^{-1}}   K / \co_{cp^s} \hspace{1cm}
\co_c/\co_{cp^s} \map{\tau}K/  \tau(\co_{cp^s}),
$$
and if we demand that $x$ has component $x_p=-1$ in $K\otimes\Q_p$ then both images are $-c\varpi$.  Thus
$$
 (E_{c,s}/\mathfrak{n}_{c,s}, f(\pi_{c,s}))^\sigma \iso   (E_{c,s},\pi_{c,s})^\tau   
$$
as elliptic curves with $\Gamma_1(p^s)$ level structure.  Combining this with (\ref{AL N moduli}) we find
$$
W_N\cdot h_{c,s}^\sigma=  h_{c,s}^\tau.
$$ 

Using the fact that $W_N$ commutes with $e^\ord$ and with $e_{k+j-2}$, it follows that $W_N\cdot y_{c,s}^\sigma=  y_{c,s}^\tau$.  Fix a set $S\subset\Gal(L_{c,s}/H_c)$ of representatives for $\Gal(H_{cp^s}/H_c)$.  Let $W_N$ act on $J_s(L_{c,s})^\ord\otimes\zeta_s$ by $W_N(P\otimes\zeta_s)=(W_NP) \otimes\zeta_s$.  We  compute
\begin{eqnarray*}
x^\tau_{c,s} &=& \sum_{\eta\in  S}  \Theta^{-1}( \eta) (y^\eta_{c,s})^\tau \otimes\zeta_s^\tau \\
&=&  
  \sum_{\eta\in  S} \Theta^{-1}(\eta) (y^\tau_{c,s})^{\tau\eta\tau^{-1}} \otimes\zeta_s^\tau \\
 &=& 
   W_N \sum_{\eta\in  \tau S\tau^{-1}}  \Theta^{-1}( \eta) (y^\sigma_{c,s})^{\eta} \otimes\zeta_s^\tau \\
 &=& 
   \Theta(\tau^{-1}) W_N \sum_{\eta\in  \tau S\tau^{-1}}  \Theta^{-1}( \eta) (y^\eta_{c,s})^{\sigma} \otimes\zeta_s \\
    &=& 
   \Theta(\sigma\tau^{-1}) W_N \big( \sum_{\eta\in  \tau S\tau^{-1}}  \Theta^{-1}( \eta) y^\eta_{c,s} \otimes\zeta_s \big)^\sigma \\
   &=&
   \Theta(\sigma\tau^{-1}) W_N x_{c,s}^\sigma.
   \end{eqnarray*}
 As  $\sigma$ acts on $\mu_{p^\infty}$ through the Artin symbol of $N_{K/\Q}(x)$, an idele of norm $N^{-1}$, we have $\epsilon_\cyc(\sigma)=N^{-1}$.  Identifying $\Theta$ with a character on $\Z_p^\times$ by factoring through the cyclotomic character, this says
 $\Theta(\sigma)=\Theta(N)^{-1}$, and similarly $\Theta(\tau)=\Theta(-1)$.  Therefore
$$
x_{c,s}^{\tau}=   \Theta^{-1}(-N)W_N x_{c,s}^\sigma.
$$
By the $G_\Q$ and $\Aut(J_s)$ equivariance of the twisted Kummer map this equality holds with
$x_{c,s}$ replaced by $\mathfrak{X}_{c,s}$.  Passing to the limit in $s$ and using Lemma \ref{sign}
(and viewing $W_N$ as an operator on $\FT^\dagger$ using the identification $\FT^\dagger\iso\FT$ of underlying $R$-modules)  it  follows that
$$
\mathfrak{X}_c^\tau = \Theta^{-1}(-N) W_N\cdot \mathfrak{X}_c^\sigma.
$$
As $W_N$ acts as $w\Theta(-N)$ on $\FT^\dagger$ by Lemma \ref{sign}, we are done.
\end{proof}

\begin{Prop}\label{functional equation}
Let $\mathfrak{p}$ be an arithmetic prime of $R$  and let $\co_\mathfrak{p}$ denote the ring of integers of $F_\mathfrak{p}$.  The Mazur-Tate-Teitelbaum \cite{MTT}  $p$-adic $L$-function 
$$L_p(g_\mathfrak{p},\cdot)\in\co_\mathfrak{p}[[\Z_p^\times]],$$
viewed as a function on characters $\Z_p^\times\map{}\overline{\Q}_p^\times$, satisfies the functional equation
\begin{equation}\label{functional display}
L_p(g_\mathfrak{p}, \chi ) 
= - w \chi^{-1}(-N)  \Theta_\mathfrak{p}(-N) \cdot  L_p(g_\mathfrak{p},  \chi^{-1} [\cdot]_\mathfrak{p} )
\end{equation}
where $w=\pm 1$ is defined by Lemma \ref{sign}. In particular, taking $\chi=\Theta_\mathfrak{p}$  and using  $\Theta_\mathfrak{p}^2  =  [\cdot]_\mathfrak{p}$ gives
$$
L_p(g_\mathfrak{p}, \Theta_\mathfrak{p} ) 
=  -w \cdot  L_p( g_\mathfrak{p},  \Theta_\mathfrak{p} ).
$$
\end{Prop}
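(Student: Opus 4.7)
The plan is to apply the Mazur--Tate--Teitelbaum functional equation for $L_p(g_\mathfrak{p},\cdot)$ and then insert the explicit value of the Atkin--Lehner eigenvalue of $g_\mathfrak{p}$ supplied by Lemma \ref{sign}.

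First I would recall (or re-derive from the classical functional equation $\Lambda(g_\mathfrak{p},s)=\varepsilon(g_\mathfrak{p})\Lambda(\bar g_\mathfrak{p},r-s)$) the MTT functional equation, rewritten in terms of characters of $\Z_p^\times$ rather than the complex variable $s$. Under the normalization of \S \ref{Hida}, where $[\gamma]_\mathfrak{p}=\psi_\mathfrak{p}(\gamma)\gamma^{r-2}$ for $\gamma\in\Gamma$, the reflection $s\mapsto r-s$ about the center of the critical strip corresponds precisely to the involution $\chi\mapsto \chi^{-1}[\cdot]_\mathfrak{p}$ on characters of $\Z_p^\times$. The MTT functional equation should then take the shape
$$L_p(g_\mathfrak{p},\chi)=-\lambda(g_\mathfrak{p})\,\chi^{-1}(-N)\,L_p(g_\mathfrak{p},\chi^{-1}[\cdot]_\mathfrak{p}),$$
where $\lambda(g_\mathfrak{p})\in\overline{\Q}_p^\times$ denotes the eigenvalue with which $W_N$ acts on the $g_\mathfrak{p}$-isotypic component, and the factor $\chi^{-1}(-N)$ is contributed jointly by the archimedean gamma factor and by the $N^{s-r/2}$ appearing in the completed $L$-function.

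The next step is to pin down $\lambda(g_\mathfrak{p})$ using Lemma \ref{sign}, which asserts that $W_N$ acts on $\FT$ as multiplication by $w\Theta(-N)$. Specializing at $\mathfrak{p}$ gives an action on $V_\mathfrak{p}=\FT/\mathfrak{p}\FT$ by the scalar $w\Theta_\mathfrak{p}(-N)$, and since the action of $W_N$ on the Galois representation attached to a newform matches its action on the form, we obtain $\lambda(g_\mathfrak{p})=w\Theta_\mathfrak{p}(-N)$. Substituting into the boxed functional equation yields (\ref{functional display}) directly. The self-dual specialization $\chi=\Theta_\mathfrak{p}$ then follows instantly: by the identity $\Theta_\mathfrak{p}^2=[\cdot]_\mathfrak{p}$ established in \S \ref{Hida}, the argument on the right-hand side becomes $\Theta_\mathfrak{p}^{-1}[\cdot]_\mathfrak{p}=\Theta_\mathfrak{p}$, and the prefactor $-w\,\Theta_\mathfrak{p}^{-1}(-N)\cdot\Theta_\mathfrak{p}(-N)$ collapses to $-w$.

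The main obstacle is matching bookkeeping between the archimedean functional equation and its $p$-adic counterpart. The normalizations used to define $L_p(g_\mathfrak{p},\cdot)$ (powers of $p$ in the interpolation formula, the choice of $p$-stabilization, the placement of the Euler factor at $p$) vary slightly across the literature, and the shift $[\gamma]_\mathfrak{p}=\psi_\mathfrak{p}(\gamma)\gamma^{r-2}$ adopted here is itself nonstandard. One must carefully track how these conventions interact with the involution $\chi\mapsto\chi^{-1}[\cdot]_\mathfrak{p}$ and with the prefactor $\chi^{-1}(-N)$ so that both the sign $-1$ and the factor $\Theta_\mathfrak{p}(-N)$ emerge exactly as stated; in particular, the residual ambiguity in $\Theta$ recorded in Remark \ref{twist remark} must be canceled against the matching ambiguity in $w$, consistent with Remark \ref{functional ambiguity}. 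Once this accounting is done, the reduction to Lemma \ref{sign} is mechanical.
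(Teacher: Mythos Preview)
Your approach is sound in outline but differs from the paper's in one essential respect: you propose to invoke the Mazur--Tate--Teitelbaum functional equation directly at each arithmetic prime $\mathfrak{p}$, whereas the paper establishes it only at weight~$2$ primes and then propagates to all weights by interpolation through the two-variable $p$-adic $L$-function $L_p\in R[[\Z_p^\times]]$ of Emerton--Pollack--Weston and Greenberg--Stevens.

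Concretely, the paper cites the MTT functional equation in the specific form of \cite[\S I.17, Corollary~2]{MTT}, which is stated for weight~$2$ forms with nebentypus; this gives $L_p(g_\mathfrak{p},\chi)=-\chi^{-1}(-N)[N]_\mathfrak{p}\cdot L_p(g_\mathfrak{p}^*,\chi^{-1}[\cdot]_\mathfrak{p})$ with $g_\mathfrak{p}^*=\langle N\rangle^{-1}W_N\cdot g_\mathfrak{p}$. Lemma~\ref{sign} is then used exactly as you suggest to rewrite $g_\mathfrak{p}^*$ as $[N]_\mathfrak{p}^{-1}\cdot w\Theta_\mathfrak{p}(-N)\cdot g_\mathfrak{p}$, yielding (\ref{functional display}) for all weight~$2$ primes. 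The passage to arbitrary weight is then a Zariski-density argument: for a fixed character $\chi$ the two sides of (\ref{functional display}) are images of elements $\pi_\chi(L_p)$ and $-w\chi^{-1}(-N)\Theta(-N)\pi_\chi^*(L_p)$ of $R$, which agree modulo infinitely many arithmetic primes and hence are equal in $R$ by Lemma~\ref{ring lemma}.

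What this buys the paper is that all the ``bookkeeping'' you flag as the main obstacle---matching normalizations of the interpolation formula, the shift $[z]\mapsto z^{r-2}\langle z\rangle$, the location of the reflection $\chi\mapsto\chi^{-1}[\cdot]_\mathfrak{p}$---only has to be checked in weight~$2$, where the MTT reference is explicit and the character $\Theta_\mathfrak{p}$ is of finite order. Your direct approach would require either re-deriving the functional equation in MTT's framework at every weight $r$ with the present normalizations, or locating a reference that does so; this is not wrong, but it is exactly the step you left unproved, and the paper's interpolation trick sidesteps it entirely.
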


\begin{proof}
According to \cite[\S 3.4]{EPW} there is a two-variable $p$-adic $L$-function $L_p\in R[[\Z_p^\times]]$ whose image under
$$
R[[\Z_p^\times]] \map{} \co_\mathfrak{p}[[\Z_p^\times]]
$$
for any arithmetic prime $\mathfrak{p}\subset R$ agrees (up to a nonzero $p$-adic period) with the one variable $p$-adic $L$-function $L_p(g_\mathfrak{p},\cdot)$ of \cite{MTT}.  Such a two variable $p$-adic $L$-function also appears in the work of Greenberg-Stevens \cite{GS}.  If we take $\mathfrak{p}$ to be an arithmetic prime of weight $2$ (so that $g_\mathfrak{p}$ has character $[\cdot]_\mathfrak{p}$)  then \cite[ \S I.17, Corollary 2]{MTT} gives
\begin{equation}\label{MTT special}
L_p(g_\mathfrak{p},\chi) = -  \chi^{-1}(-N) [N]_\mathfrak{p} \cdot
L_p(g_\mathfrak{p}^*, \chi^{-1}  [\cdot]_\mathfrak{p})
\end{equation}
where 
$g_\mathfrak{p}^* = \langle N\rangle^{-1} W_N \cdot g_\mathfrak{p}.$ 
Here $W_N$ acts on modular forms by the contravariant action on
$$
S_2(\Phi_s, F_\mathfrak{p})\iso 
H^0(X_{s/F_\mathfrak{p}} ,\Omega^1_{X_s/F_\mathfrak{p}} )
$$
(our $\langle N\rangle^{-1} W_N$ is the $w_Q$ of [\emph{loc. cit.}]).
But then 
$$
g_\mathfrak{p}^* = [N]_\mathfrak{p}^{-1} \cdot w \Theta_\mathfrak{p}(-N) \cdot g_\mathfrak{p}  \\
$$
as the eigenvalue of $W_N$ acting on $g_\mathfrak{p}$ agrees with the eigenvalue $w\Theta_\mathfrak{p}(-N)$ of $W_N$ acting on the corresponding factor $V_\mathfrak{p}$ of the $p$-adic Tate module of $J_s$.  Combining this with (\ref{MTT special}) we find that there are infinitely many  arithmetic primes $\mathfrak{p}\subset R$ such that (\ref{functional display}) holds for every character $\chi$.

Now fix a character $\chi:\Z_p^\times\map{}\co_F^\times$, possibly of infinite order, and define $R$-module maps 
\begin{eqnarray*}
\pi_\chi:R[[\Z_p^\times]]\map{}R & & z\mapsto \chi(z) \\
\pi_\chi^*:R[[\Z_p^\times]]\map{}R & & z\mapsto   \chi^{-1}(z)\cdot [z]
\end{eqnarray*}
for $z\in\Z_p^\times$.
Composing these with $R\map{}\co_\mathfrak{p}$ for any arithmetic prime $\mathfrak{p}$ gives two more maps 
$$
\pi_{\chi,\mathfrak{p}}, \pi_{\chi,\mathfrak{p}}^* :R[[\Z_p^\times]]\map{}\co_\mathfrak{p},
$$
and by what we have proved the equality 
$$
\pi_{\chi,\mathfrak{p}} (L_p)=-w\chi^{-1}(-N)\Theta_\mathfrak{p} (-N) \pi_{\chi,\mathfrak{p}}^*(L_p)
$$
holds for all arithmetic primes of weight two.  By Lemma \ref{ring lemma} we must have
$$
\pi_{\chi} (L_p)=-w\chi^{-1}(-N)\Theta (-N) \pi_{\chi}^*(L_p).
$$
This means that for \emph{every} arithmetic prime $\mathfrak{p}$ the equality (\ref{functional display}) holds for all characters $\chi:\Z_p^\times\map{}\co_F^\times$, and as $L_p(g_\mathfrak{p},\cdot)$ is determined by its values on such characters we conclude that (\ref{functional display}) holds for all arithmetic primes and all characters.
\end{proof}


\subsection{Selmer groups}
\label{ss:selmer}


\begin{Prop}\label{Prop:ord}
Let $v$ be a place of $\overline{\Q}$ above $p$ and let $I_v\subset D_v\subset G_\Q$ be the inertia and decomposition groups of $v$.  Let $\eta_v:D_v/I_v\map{}R^\times$ be the character taking the  arithmetic Frobenius to $U_p$. There is a short exact sequence of $R[D_v]$-modules 
\begin{equation}\label{local filtration}
0\map{}F^+_v(\FT)\map{}\FT\map{}F^-_v(\FT)\map{}0
\end{equation}
such that $F^+_v(\FT)$ and $F^-_v(\FT)$ are free of rank one over $R$, $D_v$ acts on $F^-_v(\FT)$ through $\eta_v$, and $D_v$ acts on $F^+_v(\FT)$ through  $\eta_v^{-1}\epsilon_\cyc [ \epsilon_\cyc]$.
\end{Prop}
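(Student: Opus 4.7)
The plan is to obtain the desired filtration as the inverse limit of the analogous finite-level filtrations on $\Ta_p^\ord(J_s)$ coming from the connected-\'etale sequence of the ordinary $p$-divisible group of $J_s$ at $p$, following the approach of Mazur--Wiles \cite{MW} and Ohta \cite{ohta} (see also \cite[\S 12.7]{selmer_complexes}).

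At each finite level $s$, smoothness of $X_s$ over $\Z_p$ permits the formation of the $p$-divisible group $J_s[p^\infty]$ of the N\'eron model of $J_s$ over $\Z_p$. Its connected-\'etale sequence at $p$, cut out by the ordinary projector $e^\ord$, yields a short exact sequence of $\hecke^\ord[D_v]$-modules
\[
0 \to F^+\Ta_p^\ord(J_s) \to \Ta_p^\ord(J_s) \to F^-\Ta_p^\ord(J_s) \to 0
\]
in which $F^-\Ta_p^\ord(J_s)$ is unramified and arithmetic Frobenius acts on it as $U_p$; this is the basic ordinary filtration on the ordinary Tate module of a modular Jacobian. Since these sequences are compatible under $\alpha_*$ and Hecke-equivariant, I would pass to the inverse limit over $s$, localize at $\mathfrak{m}$, and tensor with $R$ over $\hecke^\ord_\mathfrak{m}$ to produce the three-term complex of $R[D_v]$-modules
\[
0 \to F^+_v(\FT) \to \FT \to F^-_v(\FT) \to 0.
\]

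I would then check exactness and verify that each of the two outer terms is free of rank one over $R$. Since $R$ is a local domain and $\FT$ is $R$-free of rank two by Proposition \ref{Gorenstein}, this reduces to checking rank one at the generic point and cyclicity modulo the maximal ideal of $R$; nonvanishing of the quotient is automatic because $U_p \in R^\times$ acts invertibly on it.

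Finally I would identify the two characters. By construction $D_v$ acts on $F^-_v(\FT)$ through the unramified character $\eta_v$ sending arithmetic Frobenius to $U_p$. For the character on $F^+_v(\FT)$, I would compute the global determinant of $\FT$ as a Galois representation: Proposition \ref{Gorenstein} gives $\det(\Frob_\ell) = [\ell]\,\ell = [\epsilon_\cyc(\Frob_\ell)] \cdot \epsilon_\cyc(\Frob_\ell)$ for every $\ell\nmid Np$, and Chebotarev density together with continuity promote this to the equality of characters $\det \FT = \epsilon_\cyc\cdot[\epsilon_\cyc] \colon G_\Q \to R^\times$. Restricting to $D_v$ and dividing by the character of $F^-_v(\FT)$ yields the asserted character $\eta_v^{-1}\epsilon_\cyc [\epsilon_\cyc]$ on $F^+_v(\FT)$. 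The principal obstacle I anticipate is descending the rank-two freeness of $\mathbf{Ta}^\ord_\mathfrak{m}$ to rank-one freeness of each graded piece after inverse limit and base change, which is what makes the appeal to the explicit structural results of \cite{MW,ohta} and the control theorem cited in Proposition \ref{Gorenstein} essential.
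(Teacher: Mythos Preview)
Your approach is essentially what underlies the references the paper cites: the paper itself simply appeals to Nekov\'a\v{r}--Plater \cite[Propositions 1.5.2(iii) and 1.5.4]{nek} for the exact sequence and the freeness of the graded pieces (with a case distinction according to whether $k+j\equiv 2\pmod{p-1}$, and invoking the Gorenstein property from Proposition~\ref{Gorenstein} for the isomorphism $R\cong F_v^-(\FT)$), and to Greenberg--Stevens \cite[Theorem~2.6(d)]{GS} for the identification of the characters. Your determinant computation for the character on $F_v^+(\FT)$ is the standard argument and is correct.

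There is, however, one technical slip. The curve $X_s$ is \emph{not} smooth over $\Z_p$: the modular curve of level $\Gamma_0(N)\cap\Gamma_1(p^s)$ has semistable, not good, reduction at $p$. The N\'eron model of $J_s$ over $\Z_p$ exists regardless, but the input you actually need is the theorem of Mazur--Wiles \cite{MW} (subsequently refined by Hida, Tilouine, and Ohta \cite{ohta}) that the \emph{ordinary part} $e^\ord J_s[p^\infty]$ extends to an ordinary $p$-divisible group over $\Z_p$, even though the full Jacobian does not have good reduction; its connected--\'etale sequence over $\Z_p$ then furnishes the filtration with the stated unramified Frobenius action on the quotient. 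Once this input is corrected, the remainder of your outline---passing to the inverse limit, localizing at $\mathfrak{m}$, descending to $R$, and checking rank-one freeness via Proposition~\ref{Gorenstein}---is the right strategy and is exactly what the cited references carry out.
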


\begin{proof}
When $k+j\not\equiv 2\pmod{p-1}$ the short exact sequence is that of  \cite[Proposition 1.5.2(iii)]{nek} together with the final statement of Proposition \ref{Gorenstein} for the isomorphism $R\iso F^-_v(\FT)$. When $k+j-2\equiv 0\pmod{p-1}$ the short exact sequence is that of  \cite[Proposition 1.5.4]{nek}.  In either case the description of the action of $D_v$ follows from \cite[Theorem 2.6(d)]{GS}.
\end{proof}

Twisting by $\Theta^{-1}$ and tensoring the exact sequence (\ref{local filtration}) with $R_\mathfrak{p}$ or $F_\mathfrak{p}$ (for any arithmetic prime $\mathfrak{p}\subset R$), yields an exact sequence of $D_v$-modules
$$
0\map{}F^+_v(M)\map{}M\map{}F^-_v(M)\map{}0
$$
for $M$ any one of $\FT_\mathfrak{p}$, $V_\mathfrak{p}$, $\FT^\dagger_\mathfrak{p}$, or
$V^\dagger_\mathfrak{p}$.

\begin{Def}\label{Greenberg def}
Let $L$ be a finite extension of $\Q$.  For each prime $v$ of $L$ let $L_v^\unr$ be the maximal unramified extension of $L_v$.  Let $M$ be any $G_\Q$-module for which we have defined  $F_v^-(M)$. For any place $v$ of $L$ define the \emph{strict Greenberg local condition}  (compare with \cite[\S 2]{greenberg} or \cite[\S 9.6.1]{selmer_complexes}
$$
H^1_\Gr(L_v,M)=
\left\{\begin{array}{ll}
\mathrm{ker}\big( H^1(L_v,M)\map{}H^1(L_v^\unr,M) \big)
&\mathrm{if\ }v\nmid p \\  \\
\mathrm{ker}\big( H^1(L_v,M)\map{}H^1(L_v,F_v^-(M)) \big)
&\mathrm{if\ }v\mid p
\end{array}\right.
$$
and the \emph{strict Greenberg Selmer group} 
$$
\Sel_\Gr(L,M) =\mathrm{ker}\big( H^1(L,M)\map{}  \prod_v H^1(L_v,M)/H^1_\Gr(L_v,M)\big)
$$
where the product is over all finite places of $L$.
\end{Def}

\begin{Def}\label{Def:exceptional}
Suppose $\mathfrak{p}\subset R$ is an arithmetic prime of weight $r$ and let $\alpha_\mathfrak{p}$ be the image of $U_p$ under $R\map{}F_\mathfrak{p}$. We will say that $\mathfrak{p}$ is  \emph{exceptional} if $r=2$, $\psi_\mathfrak{p}$ is the trivial character, and $\alpha_\mathfrak{p} =\pm 1.$
\end{Def}

\begin{Lem}\label{H^0 terms}
Let $L/\Q$ be a finite extension and fix a place $v$ of $\overline{\Q}$ above $p$.  If $\mathfrak{p}$ is an arithmetic prime of $R$ which is not exceptional then
$$
H^0(L_v,F_v^-(V_\mathfrak{p}^\dagger))=0.
$$
Furthermore, if $\tilde{L}_v$ is any finite extension of a ramified $\Z_p$-extension of $L_v$ then
$$
H^0(\tilde{L}_v,F_v^-(\FT^\dagger))=0.
$$
\end{Lem}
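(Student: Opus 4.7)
The plan is to leverage the description given by Proposition \ref{Prop:ord}: the $D_v$-action on $F_v^-(\FT^\dagger)$ is through the character $\chi := \eta_v\Theta^{-1}$ (valued in $R^\times$), with $\eta_v$ unramified and $\eta_v(\Frob_v)=U_p$; the same applies to $F_v^-(V_\mathfrak{p}^\dagger)$ with the specialization $\chi_\mathfrak{p} := \eta_{v,\mathfrak{p}}\Theta_\mathfrak{p}^{-1}$.  Since these modules are free of rank one over the integral domains $R$ and $F_\mathfrak{p}$ respectively, vanishing of the relevant $H^0$ reduces to showing that the corresponding character is nontrivial on the relevant Galois subgroup.

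For the first statement, assume for contradiction that $\chi_\mathfrak{p}|_{G_{L_v}}=1$; I aim to deduce that $\mathfrak{p}$ is exceptional.  Restricting to inertia kills $\eta_{v,\mathfrak{p}}$ and yields $\Theta_\mathfrak{p}|_{I_{L_v}}=1$.  Using the explicit formula
$$\Theta_\mathfrak{p}(\delta\gamma) \;=\; \omega^{\frac{k+j}{2}-1}(\delta)\,\psi_\mathfrak{p}^{1/2}(\gamma)\,\gamma^{\frac{r-2}{2}}$$
and the fact that $\epsilon_\cyc(I_{L_v})\cap\Gamma$ is an infinite open subgroup of $\Gamma$, the infinite-order factor $\gamma^{(r-2)/2}$ forces $r=2$.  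With $r=2$, $\Theta_\mathfrak{p}$ is of finite order, and the Frobenius condition $\chi_\mathfrak{p}(\Frob_{L_v})=1$ exhibits $\alpha_\mathfrak{p}^f=\Theta_\mathfrak{p}(\Frob_{L_v})$, with $f$ the residue degree of $L_v/\Q_p$, as a root of unity; hence $\alpha_\mathfrak{p}$ itself is a root of unity.  By purity of the Weil-Deligne representation for weight-two newforms, however, $|\alpha_\mathfrak{p}|_\infty=\sqrt{p}$ whenever $g_\mathfrak{p}$ is either a $p$-stabilization of a level-$N$ newform (so $\psi_\mathfrak{p}$ trivial and $\alpha_\mathfrak{p}\ne\pm 1$) or a $p$-new form with ramified principal series at $p$ (so $\psi_\mathfrak{p}$ nontrivial).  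The only remaining possibility is that $g_\mathfrak{p}$ is new at $p$ with trivial nebentype (Steinberg type at $p$), and then Atkin-Lehner \cite[Theorem 4.6.17]{miyake} gives $\alpha_\mathfrak{p}=\pm 1$; that is, $\mathfrak{p}$ is exceptional.

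For the second statement, examine $\chi|_{G_{\tilde{L}_v}}$ in the same way.  Using $\Theta=\epsilon_\tame^{(k+j)/2-1}[\epsilon_\wild^{1/2}]$ together with the injectivity of $\Lambda\hookrightarrow R$ and the fact that $[\gamma]\ne 1$ in $\Lambda$ for $\gamma\in\Gamma\setminus\{1\}$, triviality of $\Theta|_{I_{\tilde{L}_v}}$ in $R^\times$ would require $\epsilon_\wild(I_{\tilde{L}_v})=1$; otherwise $\chi$ is already nontrivial on inertia.  In the remaining case ($\tilde{L}_v$ containing the cyclotomic $\Z_p$-extension of $\Q_p$) the whole character $\Theta$ becomes finite-order on $G_{\tilde{L}_v}$, and triviality of $\chi$ forces $U_p^f$ to equal a root of unity in $R$, where $f$ is the residue degree of $\tilde{L}_v/\Q_p$.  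But applying the first statement to any arithmetic prime $\mathfrak{p}$ of weight $r>2$ (which is automatically non-exceptional) yields $\alpha_\mathfrak{p}=U_p\bmod\mathfrak{p}$ is not a root of unity, so $U_p$ is not a root of unity in the domain $R$, yielding the required contradiction.

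The principal obstacle is the weight-two analysis in the first statement, specifically ruling out $\alpha_\mathfrak{p}$ being a root of unity in the subcase where $\psi_\mathfrak{p}$ is nontrivial.  Handling this case requires purity of the local Galois representation at $p$ for new principal series forms (a consequence of Eichler-Shimura-Deligne) to conclude that $|\alpha_\mathfrak{p}|_\infty=\sqrt{p}$, rather than just the elementary Hasse-Weil bound on Hecke eigenvalues.
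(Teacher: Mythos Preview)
Your argument for the first claim is correct and follows the same route as the paper: both reduce to showing that $\eta_{v,\mathfrak{p}}\Theta_\mathfrak{p}^{-1}$ having finite order forces $r=2$ and $\alpha_\mathfrak{p}$ a root of unity, and then use the classification of weight-two forms at $p$ to pin down the exceptional case. Where you invoke purity explicitly, the paper cites \cite[\S I.12 Case II]{MTT}, which packages the same information.

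Your argument for the second claim, however, has a genuine gap in case (b). From $\epsilon_\wild(I_{\tilde L_v})=1$ you conclude that $\tilde L_v$ contains the cyclotomic $\Z_p$-extension $\Q_{p,\infty}$ and hence that $\Theta$ has finite order on $G_{\tilde L_v}$; this inference fails. For instance, take $L_v=\Q_p$ and let $M_\infty/\Q_p$ be a totally ramified $\Z_p$-extension with $M_\infty\cap\Q_{p,\infty}=\Q_p$ (such extensions exist inside the $\Z_p^2$-extension of $\Q_p$). Then $\Q_{p,\infty}M_\infty/M_\infty$ is the \emph{unramified} $\Z_p$-extension of $M_\infty$, so $\epsilon_\wild|_{I_{M_\infty}}=1$, yet $\epsilon_\wild|_{G_{M_\infty}}$ surjects onto $\Gamma$ and $\Theta$ has infinite order there. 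In that situation your conclusion ``$U_p^f$ is a root of unity in $R$'' does not follow. A secondary issue: you appeal to the first statement to deduce that $\alpha_\mathfrak{p}$ is not a root of unity for $r>2$, but the first statement only gives $\chi_\mathfrak{p}|_{G_{L_v}}\neq 1$; the fact you need comes instead from purity, as in your own treatment of the first claim.

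The paper avoids this by specializing at a weight-two non-exceptional prime $\mathfrak{p}$, so that $\Theta_\mathfrak{p}$ is of finite order on all of $G_{\Q_p}$ regardless of $\tilde L_v$. The other ingredient---which you never state and which is the real content of the hypothesis on $\tilde L_v$---is that a finite extension of a ramified $\Z_p$-extension of $L_v$ has \emph{finite residue field}. Hence the unramified character $\eta_{v,\mathfrak{p}}$, being of infinite order, cannot die on $G_{\tilde L_v}$; passing to a further finite extension trivializing $\Theta_\mathfrak{p}$ shows $\chi_\mathfrak{p}$, and therefore $\chi$, is nontrivial on $G_{\tilde L_v}$. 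Your case split on inertia can be made to work, but only after replacing the false step by this residue-field observation and specializing at weight two rather than weight $>2$.
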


\begin{proof}
Fix an arithmetic prime $\mathfrak{p}$ of $R$ of weight $r$.  Proposition \ref{Prop:ord} implies that the action of $\Gal(\overline{\Q}_p/\Q_p)$ on $F_p^-(V_\mathfrak{p}^\dagger)$ is through the $F_\mathfrak{p}^\times$ valued character $\eta_{v,\mathfrak{p}}\Theta_\mathfrak{p}^{-1}$
 where $\eta_{v,\mathfrak{p}}$ takes the arithmetic Frobenius to $\alpha_\mathfrak{p}$.  The claim is that 
$$
 \eta_{v,\mathfrak{p}}\Theta_\mathfrak{p}^{-1}\mathrm{\ of\ finite\ order\ }\implies \mathfrak{p} \mathrm{\ exceptional}.
 $$
Indeed,  if $ \eta_{v,\mathfrak{p}}\Theta_\mathfrak{p}^{-1}$ is of finite order then $\eta_{v,\mathfrak{p}}$ becomes trivial when restricted to the Galois group of a finite extension of $L_v(\mu_{p^\infty})$.  As $\eta_{v,\mathfrak{p}}$ is unramified, it follows that $\eta_{v,\mathfrak{p}}$ is of finite order and so $\alpha_{\mathfrak{p}}$ is a root of unity.  But then also $\Theta_\mathfrak{p}$ is of  finite order, and  as 
 $$
\Theta_\mathfrak{p}(\sigma) = \epsilon_\tame^{\frac{k+j}{2} -1 }(\sigma) \cdot \epsilon_\wild^{\frac{r}{2}-1}(\sigma) \cdot \psi_\mathfrak{p}(\epsilon_\wild^{1/2}(\sigma ))
 $$
we must have $r=2$. By \cite[\S I.12 Case II]{MTT} the eigenform $g_\mathfrak{p}$ of weight $2$ attached to $\mathfrak{p}$ is a newform of level $Np$ and trivial character with $\alpha_\mathfrak{p}=\pm 1$.  Thus $\mathfrak{p}$ is exceptional.

The first claim of the lemma is now immediate, as $H^0(L_v,F_v^-(V_\mathfrak{p}^\dagger))\not=0$ implies that $ \eta_{v,\mathfrak{p}}\Theta_\mathfrak{p}^{-1}$ is of finite order.  For the second claim, fix a weight $2$ arithmetic prime $\mathfrak{p}$  which is not exceptional, so that  $\eta_{v,\mathfrak{p}}\Theta_\mathfrak{p}^{-1}$ has infinite order.  As $\Theta_\mathfrak{p}$ is of finite order,  $\eta_{v,\mathfrak{p}}$ is  unramified and of infinite order and so cannot become trivial over any extension of $L_v$ whose maximal unramified subfield is finite over $\Q_p$.  In particular $\eta_{v,\mathfrak{p}}$ has nontrivial restriction to any finite extension of $\tilde{L}_v$, and so $ \eta_{v,\mathfrak{p}}\Theta_\mathfrak{p}^{-1}$ has nontrivial restriction to $\tilde{L}_v$.  Therefore $\eta_v\Theta^{-1}$ has nontrivial restriction to $\tilde{L}_v$, and so $H^0(\tilde{L}_v,F_v^-(\FT^\dagger))=0$.
\end{proof}

If $M$ is either  $\FT^\dagger$ or  $V^\dagger_\mathfrak{p}$ and $L/\Q$ is a finite extension, one also has a family of \emph{extended Selmer groups} $\tilde{H}^i_f(L,M)$  defined by Nekov{\'a}{\v{r}} \cite{selmer_complexes} using similar local conditions to those defining $\Sel_\Gr$, but with the local conditions imposed on the level of cochain  complexes rather than on cohomology.  For $i=1$ Nekov{\'a}{\v{r}}'s extended Selmer group sits in an exact sequence \cite[Lemma 9.6.3]{selmer_complexes}
$$
0\map{}\bigoplus_{v\mid p}H^0(L_v,F_v^-(M))  \map{} \tilde{H}^1_f(L, M)\map{}\Sel_\Gr(L,M)\map{}0.
$$
In particular Lemma \ref{H^0 terms}  implies
\begin{equation}\label{big nek green}
\tilde{H}^1_f(L,\FT^\dagger)\iso\Sel_\Gr(L,\FT^\dagger)
\end{equation}
and, if $\mathfrak{p}$ is an arithmetic prime of $R$ which is not exceptional, 
\begin{equation}\label{nek-green compare}
\tilde{H}^1_f(L,V^\dagger_\mathfrak{p}) \iso \Sel_\Gr(L,V^\dagger_\mathfrak{p}).
\end{equation}
If $\mathfrak{p}$ has even weight then according to \cite[Proposition 12.5.9.2]{selmer_complexes} one also has an exact sequence
$$
0\map{}\bigoplus_{v\mid p}H^0(L_v,F_v^-(V_\mathfrak{p}^\dagger))  \map{} \tilde{H}^1_f(L, V_\mathfrak{p}^\dagger)\map{}H_f^1(L,V_\mathfrak{p}^\dagger)\map{}0
$$
in which $H_f^1(L,V_\mathfrak{p}^\dagger)$ is the Bloch-Kato Selmer group.  In particular
\begin{equation}\label{bloch-kato compare}
\Sel_\Gr(L,V_\mathfrak{p}^\dagger)=H_f^1(L,V_\mathfrak{p}^\dagger)
\end{equation}
as both are equal to the image of $\tilde{H}^1_f(L,V_\mathfrak{p}^\dagger)\map{}H^1(L,V_\mathfrak{p}^\dagger)$.

\begin{Prop}\label{Prop:greenberg}
For any positive integer $c$ prime to $N$  there is a nonzero $\lambda\in R$ (which may depend on $c$) such that
$$
\lambda\cdot \mathfrak{X}_c\in \Sel_\Gr(H_c,\FT^\dagger).
$$
If  $N$ is prime to $\mathrm{disc}(K)$ then we may take $\lambda=1$.
\end{Prop}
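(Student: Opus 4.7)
The plan is to verify the strict Greenberg local conditions of Definition \ref{Greenberg def} at every finite place $v$ of $H_c$, working at finite level with the classes $\mathfrak{X}_{c,s}$ and then passing to the inverse limit as $s\to\infty$; since $\mathfrak{X}_c$ is obtained from $\mil U_p^{-s}\mathfrak{X}_{c,s}$ via the $G_\Q$-equivariant quotient $\mathbf{Ta}^{\ord}\otimes\zeta \to \FT^\dagger$, it suffices to check the local conditions on the $\mathfrak{X}_{c,s}$ up to a controlled discrepancy.

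At a place $v$ above a rational prime $\ell \nmid Np$, the modular curve $X_s$ has good reduction, so the Kummer image of any $L_{c,s}$-rational point of $J_s$ is automatically unramified at $v$. The local condition is therefore trivially satisfied at such $v$ with no further argument.

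For $v\mid p$ the required condition is that the image of $\mathfrak{X}_c$ in $H^1(H_{c,v}, F_v^-(\FT^\dagger))$ vanish. I would argue that the filtration of Proposition \ref{Prop:ord} is the Galois-theoretic incarnation of the \'etale--connected filtration of the ordinary $p$-divisible group $e^{\ord}J_s[p^{\infty}]$: the subobject $F_v^+$ corresponds to the connected (formal group) part, and the quotient $F_v^-$ to the \'etale part. Since the local Kummer map at $p$ factors through $H^1$ of the formal group, $\mathfrak{X}_{c,s}$ has vanishing image in the $F_v^-$ piece at every level $s$; twisting by $\zeta_s$, forming the inverse limit, and projecting to $\FT^\dagger$ all respect this filtration, so the $F_v^-$-image of $\mathfrak{X}_c$ is trivial.

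The essential difficulty is at a place $v \mid \ell$ with $\ell \mid N$, where $J_s$ has bad reduction and the Kummer image can fail to be unramified; the obstruction is measured by the image of $x_{c,s}$ in the component group of the N\'eron model of $J_s$ at $v$. These component groups are finite, so one can find a nonzero $\lambda \in R$ that uniformly annihilates the obstructions in $s$ (using the finiteness of the set of primes dividing $N$, the tower compatibilities, and the fact that $R$ is a Noetherian integral domain). Multiplying by such $\lambda$ then lands the class in $\Sel_\Gr(H_c,\FT^\dagger)$. Finally, when $N$ is coprime to $\mathrm{disc}(K)$, Hypothesis (b) forces every prime divisor of $N$ to split in $K$; the CM elliptic curve $E_{c,s}$ then has good reduction at every place of $H_{cp^s}$ over $\ell \mid N$, so $h_{c,s}$ specializes to a smooth point of the special fiber of $X_s$ and $x_{c,s}$ lies in the identity component of the N\'eron model of $J_s$. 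The component-group obstruction vanishes and one may take $\lambda = 1$.
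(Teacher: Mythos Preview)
Your approach is more geometric than the paper's and, if it could be made rigorous, would be conceptually cleaner; but as written several of the key steps are assertions rather than arguments.

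At $v\mid p$ you claim the local Kummer map factors through $H^1$ of the formal-group part. This is the standard picture for abelian varieties with good ordinary reduction, but $J_s$ has \emph{bad} reduction at $p$; while $e^{\ord}J_s[p^\infty]$ does extend to a $p$-divisible group over $\Z_p$ and this is what underlies the filtration of Proposition~\ref{Prop:ord}, the factoring of the Kummer map through $F_v^+$ is not automatic in this setting and you have not supplied it. At $v\mid N$ your assertion that a single nonzero $\lambda\in R$ annihilates the component-group obstruction uniformly in $s$ is unsupported; ``tower compatibilities and Noetherianity'' is not an argument. And for the $\lambda=1$ claim, even granting that $E_{c,s}$ has good reduction at a split prime $\ell\mid N$ so that $h_{c,s}$ lands in the smooth locus of the special fiber of $X_s$, you would still need the class $x_{c,s}$ (after applying $e^{\ord}$, twisting, and summing over Galois conjugates) to have trivial image in the component group of the N\'eron model of $J_s$, and that does not follow formally from smoothness of the point.

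The paper's method is uniform across all $v\mid Np$ and bypasses both formal groups and component groups. One first shows, for every weight-$2$ arithmetic prime $\mathfrak{p}$, that the specialization $\mathfrak{X}_{c,\mathfrak{p}}$ lies in the Bloch--Kato (hence Greenberg) Selmer group, using only that the underlying untwisted class is the Kummer image of a rational point on a Jacobian. Then injectivity of the specialization map together with Lemma~\ref{ring lemma} forces the image of $\mathfrak{X}_c$ in $H^1(H_{c,v},F_v^-(\FT^\dagger))$ (for $v\mid p$) or in $H^1(H_{c,v}^{\unr},\FT^\dagger)$ (for $v\mid N$) to be $R$-torsion. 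At $v\mid p$ this torsion is then killed outright by an inverse-limit argument over the tower $H_{cp^n}$ using Proposition~\ref{Prop:ESrelations} and Lemma~\ref{H^0 terms}; at $v\mid N$ it is killed by some $\lambda_v$, and one takes $\lambda=\prod_{v\mid N}\lambda_v$. When $(N,\mathrm{disc}(K))=1$ the paper does not argue via good reduction of $E_{c,s}$ at all: instead, $\ell\mid N$ split in $K$ implies each $v\mid\ell$ is finitely decomposed in the $\Z_p$-extension $H_{cp^\infty}/H_c$, and since $\mathfrak{X}_c$ is a universal norm from this tower (Proposition~\ref{Prop:ESrelations}), \cite[Corollary~B.3.5]{rubin} forces it to be unramified at $v$.
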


\begin{proof}
For any place $v$ of $H_c$ and any $G_\Q$-module $M$ denote by
$$
\mathrm{loc}_v:H^1(H_c,M)\map{}H^1(H_{c,v},M)
$$
the localization map.  If $v$ is a finite prime of $H_c$ not dividing $Np$, then the  unramifiedness of $\mathfrak{X}_c$ at $v$ is part of Definition \ref{Def:big}.  

Now suppose $v\mid Np$ and choose a place $w$ of $\overline{\Q}$ above $v$.  Let $\mathfrak{p}$ be an arithmetic prime of weight $2$, and let $s=\max\{1,\ord_p(\mathrm{cond}(\psi_\mathfrak{p})) \}$ so that  the natural map $\mathbf{Ta}^\ord\map{}V_\mathfrak{p}$ factors through  $\Ta_p^\ord(J_s)$.  Let $\mathfrak{X}_{c,\mathfrak{p}}$ denote the image of $\mathfrak{X}_c$ in $H^1(H_c,V^\dagger_\mathfrak{p})$.  If we set $L_{c,s}=H_{cp^s}(\mu_{p^s})$ then $V_\mathfrak{p}^\dagger\iso V_\mathfrak{p}$ after restriction to $L_{c,s}$, and directly from the construction we see that the restriction of  $\mathfrak{X}_{c,\mathfrak{p}}$ to $H^1(L_{c,s}, V_\mathfrak{p}^\dagger)$ lies in the image of the composition
$$
J_s(L_{c,s})^\ord  \map{}H^1(L_{c,s},\Ta_p^\ord(J_s)) \map{}H^1(L_{c,s},V_\mathfrak{p})\iso H^1(L_{c,s},V_\mathfrak{p}^\dagger)
$$
where the first arrow is the usual (untwisted) Kummer map, and is  therefore contained in in the Bloch-Kato Selmer group by  \cite[Example 3.11]{BK} (see also  \cite[Proposition 1.6.8]{rubin}).  By (\ref{bloch-kato compare}) the restriction of $\mathfrak{X}_{c,\mathfrak{p}}$ to $L_{c,s}$ lies in $\Sel_\Gr(L_{c,s}, V_\mathfrak{p}^\dagger)$.  If  $v\mid N$ then the fact that $L_{c,s,w}/H_{c,v}$ is unramified implies that 
\begin{equation}\label{localization in greenberg}
\mathrm{loc}_v ( \mathfrak{X}_{c,\mathfrak{p}} ) \in H^1_\Gr(H_{c,v},V_\mathfrak{p}^\dagger).
\end{equation}
If  $v\mid p$ then the restriction map 
$$
H^1(H_{c,v},F_v^-(V_\mathfrak{p}^\dagger))  \map{}   H^1(L_{c,s,w},F_v^-(V_\mathfrak{p}^\dagger))
$$
is injective,  as the kernel
$$
H^1(L_{c,s,w}/H_{c,v}, H^0(L_{c,s,w},V_\mathfrak{p}^\dagger))
$$
is both an $F_\mathfrak{p}$-vector space and is annihilated by $[L_{c,s,w}:H_{c,v}]$.
Therefore (\ref{localization in greenberg}) holds in this case as well.
We have now shown that 
\begin{equation}\label{often in greenberg}
\mathfrak{X}_{c,\mathfrak{p}} \in \Sel_\Gr(H_{c,v},V^\dagger_\mathfrak{p})
\end{equation}
for all arithmetic primes of weight $2$.

Suppose $v\mid p$.  For any arithmetic prime $\mathfrak{p}$ we may, by Lemma \ref{DVR},  fix a generator $\pi$ of the maximal ideal of  $R_\mathfrak{p}$.  The exactness of
$$
0\map{}F^-_v(\FT^\dagger_\mathfrak{p})   \map{\pi}   F^-_v(\FT^\dagger_\mathfrak{p})   \map{}   F^-_v(V_\mathfrak{p}^\dagger)\map{}0
$$
shows that the natural map
\begin{equation}\label{green spec}
\frac{H^1(H_{c,v},F^-_v(\FT^\dagger))_\mathfrak{p}}   {\mathfrak{p}\cdot H^1(H_{c,v},F^-_v(\FT^\dagger))_\mathfrak{p}}  \map{}H^1(H_{c,v},F^-_v(V_\mathfrak{p}^\dagger))
\end{equation}
is injective.  By (\ref{often in greenberg}) the image  of $\mathfrak{X}_{c}$ in the right hand side of  (\ref{green spec}) is trivial for infinitely many $\mathfrak{p}$.  Combining \cite[Proposition 4.2.3]{selmer_complexes} and \cite[Theorem 7.1.8(iii)]{neuk} the $R$-module $H^1(H_{c,v},F_v^-(\FT^\dagger))$ is finitely generated.  Therefore Lemma \ref{ring lemma} applies and the image of $\mathfrak{X}_c$ under
\begin{equation}\label{greenberg loc}
H^1(H_c,\FT^\dagger)  \map{}  H^1(H_{c,v},F_v^-(\FT^\dagger))
\end{equation}
is a torsion element. Let $S_n\subset H^1(H_{cp^n,w},F_v^-(\FT^\dagger))$ be the  $R$-torsion submodule. By the Euler system relations of Proposition \ref{Prop:ESrelations} and the  discussion above (which holds equally well with $c$ replaced by $cp^n$),   the image of $\mathfrak{X}_{c}$  under (\ref{greenberg loc}) lies in the image of $\mil S_n\map{}S_0$, where the inverse  limit is with respect to corestriction.  Set 
$$
\mathbf{V}=F_v^-(\FT^\dagger)\otimes_{R}\mathcal{K}
$$ 
(recall that $\mathcal{K}$ is the fraction field of $R$)  and define $\mathbf{A}$ by the exactness of 
$$
0\map{}F^-_v(\FT^\dagger)\map{}\mathbf{V} \map{}\mathbf{A}\map{}0.
$$
The restriction of $\eta_v\Theta^{-1}$, the character giving the  Galois action on $\mathbf{V}$, to  $H_{cp^\infty,w}$ is nontrivial by Lemma \ref{H^0 terms}, and it follows that 
$$
S_n\iso H^0(H_{cp^n,w},\mathbf{A}).
$$  
If we pick  a $\sigma\in \Gal(\overline{\Q}_p/H_{cp^{\infty},w})$ such that $(\eta_v\Theta^{-1})(\sigma)\not=1$ then, as $\mathbf{A}\iso \mathcal{K}/ R$ as  $R$-modules,
$$
H^0(H_{cp^\infty,w},\mathbf{A})\subset \mathbf{A}[\sigma-1]\iso R/\big((\eta_v\Theta^{-1})(\sigma)-1\big).
$$
 In particular  $H^0(H_{cp^\infty,w},\mathbf{A})$ is finitely generated as an $R$-module, and so for a sufficiently large $m$ the restriction map $S_m\map{}S_n$ is an isomorphism for all $n\ge m$.  This implies that the image of corestriction $S_n\map{}S_m$ is divisible by $p^{n-m}$.  As $S_m$, being finitely generated over $R$, has no nontrivial $p$-divisible submodules, $$\mil S_n=0.$$  This proves that  the image of $\mathfrak{X}_{c}$  under (\ref{greenberg loc}) is trivial, and so 
$$\mathrm{loc}_v(\mathfrak{X}_c) \in H^1_\Gr(H_{c,v},\FT^\dagger).$$

Now suppose $v\mid N$. We know from above that the image $\mathfrak{X}_c$ under
$$
\frac{H^1(H_{c,v}^\unr,\FT^\dagger)_\mathfrak{p}}{\mathfrak{p}H^1(H_{c,v}^\unr,\FT^\dagger)_\mathfrak{p}}  \map{}H^1(H_{c,v}^\unr, V_\mathfrak{p}^\dagger)
$$
is trivial for infinitely many arithmetic primes $\mathfrak{p}$.  The finite generation of the $R$-module $H^1(H_{c,v}^\unr,\FT^\dagger)  $ is a consequence of \cite[Proposition 4.2.3]{selmer_complexes}  as above, provided one knows that $H^1(H_{c,v}^\unr, M)$ is finite for every finite Galois module $M$ of $p$-power order.  This follows from the proof of \cite[Theorem 7.1.8(iii)]{neuk}, the essential point being that one knows the finiteness of $H^i(B, \mu_{p^n})$ for every finite extension $B/H_{c,v}^\unr$ by passing to the limit over finite subfields in  \cite[Theorem 7.1.8(ii)]{neuk}.
As in the case $v\mid p$, this implies that the restriction of $\mathfrak{X}_c$ to $H^1(H_{c,v}^\unr,\FT^\dagger)$ is $R$-torsion.  Thus there is a nonzero $\lambda_v\in R$ such that 
$$
\mathrm{loc}_v(\lambda_v\mathfrak{X}_c)\in H^1_\Gr(H_{c,v},\FT^\dagger).
$$  
Taking $\lambda=\prod_{v\mid N} \lambda_v$ we then have $\lambda\mathfrak{X}_c\in\Sel_\Gr(H_c,\FT^\dagger)$.  If we assume that $N$ is prime to $\mathrm{disc}(K)$ then $v$ splits in $K$ and it follows that  $v$ is finitely decomposed in $H_{cp^\infty}$.   By the Euler system relations (Proposition \ref{Prop:ESrelations}) $\mathfrak{X}_c$ is a universal norm from the $\Z_p$-extension $H_{cp^\infty}/H_c$, hence  the image of $\mathfrak{X}_c$ in $H^1(H_c,\FT^\dagger/\mathfrak{m}^k\FT^\dagger)$ for every $k\ge 0$ ($\mathfrak{m}$ is the maximal ideal of $R$) is unramified at $v$ by \cite[Corollary B.3.5]{rubin}.  It follows that $\mathfrak{X}_c$ has trivial image in
$$
H^1(H^\unr_{c,v},\FT^\dagger)=\mil_k H^1(H^\unr_{c,v},\FT^\dagger/\mathfrak{m}^k\FT^\dagger),
$$
using  \cite[Lemma 4.2.2]{selmer_complexes} to justify passing to the limit, and so
$$
\mathrm{loc}_v(\mathfrak{X}_c) \in H^1_\Gr(H_{c,v},\FT^\dagger).
$$
\end{proof}

\begin{Rem}
The proof of Proposition \ref{Prop:greenberg} shows that $\mathfrak{X}_c$ satisfies the correct local conditions to lie in $\Sel_\Gr(H_c,\FT^\dagger)$, except possibly at places dividing both $N$ and $\mathrm{disc}(K)$.
\end{Rem}


\section{Iwasawa theory}
\label{Iwasawa}


Throughout all of \S \ref{Iwasawa} we assume that $N$ and $\mathrm{disc}(K)$ are relatively prime and that $p\nmid\phi(N)$, where $\phi$ is Euler's function.   In particular for every positive integer $c$ prime to $N$ the big Heegner point $\mathfrak{X}_c$ of Definition \ref{Def:big} lies in $\Sel_\Gr(H_c,\FT^\dagger)$ by Proposition \ref{Prop:greenberg}


\subsection{The vertical nonvanishing theorem}
\label{ss:vnt}


Let $D_\infty$ be the anticyclotomic $\Z_p$-extension of $K$. Define $\mathcal{G}=\Gal(H_{p^\infty}/K)$
and let
$$
\mathcal{G}_\tors= \Gal(H_{p^\infty}/D_\infty)
$$
be the torsion subgroup of $\mathcal{G}$.  Keep $R$, $\FT$, and $\FT^\dagger$ as in  \S \ref{ss:construction}.  Let  $\mathfrak{m}$ be the maximal ideal of $R$.  The modular form $g$ of the introduction furnishes us with a surjection $R\map{}\co_F$ as in \S \ref{Hida} inducing an isomorphism of residue fields
$R/\mathfrak{m}\iso \co_F/\mathfrak{m}\co_F$.  Fix a character $\chi:\mathcal{G}_\tors\map{}\co_F^\times$ and set
$$
e_\chi=\sum_{g\in\mathcal{G}_\tors}\chi(g)\cdot g\in \co_F[\mathcal{G}_\tors].
$$ 
The proof of the following theorem, which follows closely the methods of Cornut and Vatsal \cite{cornut02,C-V,vatsal02},  will be given in \S \ref{ss:prnv}.

\begin{Thm}\label{RNV}
Let 
$$
\mathrm{Heeg}_s\subset  H^1(H_{p^s}, \FT^\dagger/\mathfrak{m}\FT^\dagger)
$$
be the $R[\Gal(H_{p^s}/K)]$-submodule generated by  the image of $\mathfrak{X}_{p^s}$.  As $s\to\infty$ the $R/\mathfrak{m}$ dimension of $e_\chi \mathrm{Heeg}_s$  grows without bound.  
\end{Thm}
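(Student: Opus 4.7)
The plan is to reduce the theorem to Cornut--Vatsal style equidistribution of CM points on the modular curve $X_{s_0}$ for some fixed level $s_0$, and then to extract the growth of dimension by a character--orthogonality argument. Throughout, write $\bar T = \FT^\dagger/\mathfrak{m}\FT^\dagger$ for the residual representation, which is absolutely irreducible by hypothesis.

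First I would identify $\bar{\mathfrak{X}}_{p^s}$ with a classical Kummer class.  Because $\mathfrak{m}\supset(p)$ and $U_p$ acts as the unit $\bar a_p$ on $\bar T$, the inverse limit defining $\mathfrak{X}_{p^s}$ collapses modulo $\mathfrak{m}$: for a suitable fixed $s_0\ge 1$, the composition $\mathbf{Ta}^\ord\otimes\zeta \to \FT^\dagger \to \bar T$ factors in a controlled way through $\Ta_p^\ord(J_{s_0})\otimes\zeta_{s_0}$.  Thus, up to the unit $\bar a_p^{-s}$, the class $\bar{\mathfrak{X}}_{p^s}$ is the residue mod $\mathfrak{m}$ of the twisted Kummer image of the divisor class $x_{p^s,s_0}$ appearing in (\ref{medium heegner}).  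The absolute irreducibility of $\bar\rho_g$ ensures that the Kummer map $J_{s_0}(H_{p^s})^\ord\otimes\zeta_{s_0}/\mathfrak{m} \to H^1(H_{p^s},\bar T)$ has trivial kernel on the Galois orbit of interest, since any obstruction would lie in $H^0$ of the residual representation and vanish by irreducibility.

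The core of the argument is then an application of the Cornut--Vatsal equidistribution theorem \cite{cornut02,C-V,vatsal02} for Galois orbits of CM points on $X_{s_0}$ (transferred via Jacquet--Langlands to a Shimura curve if convenient).  In the classical setting CV show that the Galois conjugates of a Heegner point of conductor $p^s$ equidistribute as $s\to\infty$, so that any fixed $\chi$-isotypic projection is nontrivial for $s\gg 0$.  One upgrades this to a growth statement via Chebotarev: infinitely many distinct Frobenius elements in $\Gal(\overline{\Q}/H_{p^{s_0}})$ act on the conjugates $\{\bar x_{p^s,s_0}^\sigma\}$ in distinct ways as $s$ grows, producing arbitrarily many linearly independent residual classes inside $\mathrm{Heeg}_s$.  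Projection by $e_\chi$ preserves this independence because $\chi$ factors through the prime-to-$p$ quotient $\mathcal{G}_\tors$ while the growth comes from the pro-$p$ direction $\Gal(D_\infty/K)$; the two factors decouple via $\mathcal{G}\iso\mathcal{G}_\tors\times\Gal(D_\infty/K)$.

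The main obstacle will be the translation in the opening step: carefully identifying residual cohomology classes arising from the inverse-limit construction with honest Kummer images of divisors on $X_{s_0}$, including keeping track of the idempotent $e_{k+j-2}$ and the $\zeta_s$-twist when passing from $\FT$ to $\FT^\dagger$.  A secondary technical issue is that the CV equidistribution must be formulated in a form that produces \emph{growth} of dimension rather than merely a single nonvanishing class; I expect this to follow by iterating the nonvanishing statement as $s$ varies and using Chebotarev to disentangle the orbits at each new level.
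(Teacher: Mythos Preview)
Your reduction to classical Kummer classes at a fixed finite level is correct and matches the paper (in fact one may take $s_0=1$, since $\Ta_p^\ord(J_1)$ is the $\Gamma$-coinvariants of $\mathbf{Ta}^\ord$, giving $\Ta_p^\ord(J_1)/\mathfrak{m}\iso\FT/\mathfrak{m}\FT$). Cornut--Vatsal is indeed the key external input. But your mechanism for extracting \emph{growth} of dimension has a genuine gap.

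The paper does not ``iterate nonvanishing'' or invoke Chebotarev to distinguish Frobenius actions on conjugates; that sketch does not produce linear independence, since the norm relation $\mathrm{Cor}(\mathfrak{X}_{p^{s+1}})=U_p\cdot\mathfrak{X}_{p^s}$ ties successive levels together and nonvanishing at level $s+1$ gives no new class beyond what is already visible at level $s$. Instead, for any target $M$ the paper fixes a set $\mathcal{S}$ of $M$ degree-two primes $v$ of $K$ with $\mathrm{N}(v)\equiv 1\pmod{Np}$ and considers \emph{simultaneous} reduction of the Heegner orbit to the supersingular loci at all $v\in\mathcal{S}$. The version of Cornut--Vatsal invoked is the refined surjectivity statement: for $s\gg 0$ the $\mathcal{G}^*$-orbit of a lifted point $\tilde h_s^*$ on $X(\Gamma_0(DN)\cap\Gamma_1(p))$ surjects onto $\prod_{v\in\mathcal{S}} X_v^{\mathrm{ss}}(\tilde\Phi)^{\mathcal{R}}$, with $\mathcal{R}$ a set of representatives for $\mathcal{G}_\tors/\mathcal{G}_\ram$. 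Two further ingredients you do not mention then convert this into a lower bound $\dim e_\chi\mathrm{Heeg}_s\ge 2|\mathcal{S}|$: Ribet's theorem that the product of degeneracy maps $\prod_{d\mid D}\lambda_{d,*}$ is surjective on degree-zero supersingular divisors (this handles the $\mathcal{G}_\ram$-part of the $\chi$-sum, which your ``decoupling'' remark does not address), and Ihara's theorem that $M_v(\Phi)\to\underline{J}_1(\F_v)$ has cokernel of order dividing $\phi(N)$, hence prime to $p$. Choosing $v$ so that $\mathrm{Frob}_v$ acts on $\FT/\mathfrak{m}\FT$ as complex conjugation gives $\dim H^1(\F_v,\FT/\mathfrak{m}\FT)=2$, and since $|\mathcal{S}|$ is arbitrary the dimension is unbounded.

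In short: the missing idea is simultaneous reduction at an arbitrarily large set of inert primes, with Ribet and Ihara supplying the passage from supersingular points to $\underline{J}_1(\F_v)$ indexed by all of $\mathcal{R}_\tors$.
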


\begin{Cor}\label{VNV}
Let $\mathfrak{p}\subset R$ be any arithmetic prime. For all $s\gg 0$,  the image of $e_\chi\mathfrak{X}_{p^s}$ in $\Sel_\Gr(H_{p^s}, V_\mathfrak{p}^\dagger)$ is nontrivial.
\end{Cor}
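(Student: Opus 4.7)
The plan is to deduce Corollary~\ref{VNV} from Theorem~\ref{RNV} via a control/specialization argument for Nekov\'a\v r's extended Selmer groups.  First, under the hypotheses in effect throughout~\S\ref{Iwasawa}, Proposition~\ref{Prop:greenberg} (with $\lambda=1$) places $\mathfrak{X}_{p^s}$, and hence $e_\chi\mathfrak{X}_{p^s}$, in $\Sel_\Gr(H_{p^s},\FT^\dagger)$.  For an arithmetic prime $\mathfrak{p}\subset R$ that is not exceptional in the sense of Definition~\ref{Def:exceptional}, the isomorphisms~(\ref{big nek green}) and~(\ref{nek-green compare}) identify both of the Greenberg Selmer groups in question with Nekov\'a\v r's extended Selmer groups $\tilde H^1_f$, so we may work in that framework.

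The main step is to invoke a specialization theorem from~\cite{selmer_complexes}: the natural map $\tilde H^1_f(H_{p^s},\FT^\dagger)\otimes_R F_\mathfrak{p} \to \tilde H^1_f(H_{p^s},V_\mathfrak{p}^\dagger)$ is injective (or at least has suitably small kernel) for non-exceptional arithmetic $\mathfrak{p}$.  Arguing by contradiction, suppose the image of $e_\chi\mathfrak{X}_{p^s}$ in $\Sel_\Gr(H_{p^s},V_\mathfrak{p}^\dagger)$ is trivial for arbitrarily large $s$.  Then the specialization theorem forces $e_\chi\mathfrak{X}_{p^s}\in\mathfrak{p}\cdot\tilde H^1_f(H_{p^s},\FT^\dagger)$ for such~$s$; since $\mathfrak{p}\subset\mathfrak{m}$, this places $e_\chi\mathfrak{X}_{p^s}$ in $\mathfrak{m}\cdot\tilde H^1_f(H_{p^s},\FT^\dagger)$, so its image in $H^1(H_{p^s},\FT^\dagger/\mathfrak{m}\FT^\dagger)$ vanishes.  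On the other hand, Theorem~\ref{RNV} asserts $\dim_k e_\chi\mathrm{Heeg}_s\to\infty$; because $\mathcal{G}$ is abelian the element $e_\chi$ commutes with the $R[\Gal(H_{p^s}/K)]$-action, so $e_\chi\mathrm{Heeg}_s=R[\Gal(H_{p^s}/K)]\cdot e_\chi\bar{\mathfrak{X}}_{p^s}$, which means that nonvanishing of $e_\chi\mathrm{Heeg}_s$ is equivalent to nonvanishing of the residual image of $e_\chi\mathfrak{X}_{p^s}$.  This yields the required contradiction.  I would also remark that, by the Euler system relation $\mathrm{Cor}_{H_{p^{s+1}}/H_{p^s}}(\mathfrak{X}_{p^{s+1}})=U_p\cdot\mathfrak{X}_{p^s}$ of Proposition~\ref{Prop:ESrelations} and the fact that $U_p$ is a unit in $R$ by ordinarity of the Hida family, nonvanishing of $e_\chi\mathfrak{X}_{p^{s_0},\mathfrak{p}}$ at any single $s_0$ automatically propagates to all $s\ge s_0$, so one really only has to produce one good $s$.

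The principal obstacle is extracting the precise control/specialization theorem needed from~\cite{selmer_complexes} (see in particular the machinery of Chapter~12) and verifying its hypotheses in the present setting; without injectivity one must take a little extra care with the cokernel, using that multiplication by a lift of a generator of $\mathfrak{p}R_\mathfrak{p}$ acts by an element of $\mathfrak{m}$ on $\tilde H^1_f(H_{p^s},\FT^\dagger)$ and hence still annihilates residual cohomology.  A secondary issue is the handling of exceptional primes $\mathfrak{p}$ (where~(\ref{nek-green compare}) and~(\ref{bloch-kato compare}) may fail); for such $\mathfrak{p}$ one has $g_\mathfrak{p}$ of weight~$2$, a newform at $Np$ with $a_p=\pm1$, and the specialization $\mathfrak{X}_{p^s,\mathfrak{p}}$ is essentially the Kummer image of a classical Heegner point on the modular abelian variety attached to $g_\mathfrak{p}$, so the Cornut--Vatsal nonvanishing theorem applies directly to give Corollary~\ref{VNV} in that case.
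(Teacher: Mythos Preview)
Your argument has a genuine gap at the specialization step.  The control theorem you invoke (essentially the exact sequence used in the proof of Corollary~\ref{generic rank}) is a statement about the \emph{localization} $\tilde{H}^1_f(H_{p^s},\FT^\dagger)_\mathfrak{p}$: from triviality of the image in $\tilde{H}^1_f(H_{p^s},V_\mathfrak{p}^\dagger)$ you can only conclude that $e_\chi\mathfrak{X}_{p^s}\in\mathfrak{p}\cdot\tilde{H}^1_f(H_{p^s},\FT^\dagger)_\mathfrak{p}$, i.e.\ there is some $r\notin\mathfrak{p}$ with $r\cdot e_\chi\mathfrak{X}_{p^s}\in\mathfrak{p}\cdot\tilde{H}^1_f(H_{p^s},\FT^\dagger)$.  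But such an $r$ may well lie in $\mathfrak{m}$, so you cannot deduce $e_\chi\mathfrak{X}_{p^s}\in\mathfrak{m}\cdot\tilde{H}^1_f(H_{p^s},\FT^\dagger)$, and since $H^1(H_{p^s},\FT^\dagger/\mathfrak{m}\FT^\dagger)$ is an $R/\mathfrak{m}$-module (not an $R_\mathfrak{p}$-module) the localized information says nothing about the residual image.  Your parenthetical remedy (``a lift of a generator of $\mathfrak{p}R_\mathfrak{p}$'') does not help for the same reason: a uniformizer of $R_\mathfrak{p}$ is of the form $r/s$ with $s\notin\mathfrak{p}$, and $s$ need not be a unit in $R$.

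The paper's argument sidesteps this entirely.  Set $T_\mathfrak{p}^\dagger=\mathrm{image}(\FT^\dagger\to V_\mathfrak{p}^\dagger)$; since $\FT^\dagger$ is free and $R$ is a domain, $T_\mathfrak{p}^\dagger\cong\FT^\dagger/\mathfrak{p}\FT^\dagger$, so the map $\FT^\dagger\to\FT^\dagger/\mathfrak{m}\FT^\dagger$ factors through $T_\mathfrak{p}^\dagger$.  If $e_\chi\mathfrak{X}_{p^s}$ dies in $H^1(H_{p^s},V_\mathfrak{p}^\dagger)$ then its image in $H^1(H_{p^s},T_\mathfrak{p}^\dagger)$ is \emph{torsion}, and the crucial extra input---which you are missing---is that the torsion subgroup of $H^1(H_{p^s},T_\mathfrak{p}^\dagger)$ has order bounded independently of $s$ (this follows from irreducibility of the residual representation; cf.\ \cite[Lemma 5.1.5]{howard-cycles}).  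Hence the $R[\Gal(H_{p^s}/K)]$-module generated by this torsion class has bounded order, and so does its image $e_\chi\mathrm{Heeg}_s$ in $H^1(H_{p^s},\FT^\dagger/\mathfrak{m}\FT^\dagger)$, contradicting the \emph{unbounded growth} in Theorem~\ref{RNV}.  Note that mere nonvanishing of $e_\chi\mathrm{Heeg}_s$ would not suffice here; one really needs the growth statement to absorb the bounded torsion.  Your use of the Euler system relation to propagate nonvanishing from one $s_0$ to all $s\ge s_0$ is correct and is exactly how the paper concludes, and your separate treatment of exceptional primes is unnecessary since the argument above is uniform in $\mathfrak{p}$.
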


\begin{proof}[Proof of Corollary \ref{VNV}]
Let $T_\mathfrak{p}^\dagger$ denote the image of   $\FT^\dagger\map{}V_\mathfrak{p}^\dagger$.  As in \cite[Lemma 5.1.5]{howard-cycles} the torsion submodule of $H^1(H_{p^s},T_\mathfrak{p}^\dagger)$ has bounded order as $s\to\infty$. Thus if $e_\chi\mathfrak{X}_{p^s}$ had torsion image in $H^1(H_{p^s},T_\mathfrak{p}^\dagger)$ for all $s$ then $e_\chi \mathrm{Heeg}_s$ would have bounded dimension as $s\to\infty$ contradicting Theorem \ref{RNV}.  Thus $e_\chi\mathfrak{X}_{p^s}$ is nontrivial in $H^1(H_{p^s},V_\mathfrak{p}^\dagger)$ for some $s$, and then by the Euler system relations  of Proposition \ref{Prop:ESrelations} it must be nontrivial for all $s\gg 0$.
\end{proof}


\subsection{Proof of Theorem \ref{RNV}}
\label{ss:prnv}


For all $s>0$ define $$L_s^*=H_{p^{s+1}}(\mu_p)= L_{p^s,1}.$$  Set
$L_\infty^*=\cup L_s^*$ and $\mathcal{G}^*=\Gal(L_\infty^*/K)$.
Let 
$$
\mathcal{G}_\ram= \langle \Frob_{Q_1},\ldots, \Frob_{Q_t} \rangle \subset\mathcal{G}_\tors
$$ 
be the subgroup  generated by the Frobenius automorphisms of all ramified primes $Q_1,\dots,Q_t$ of $K$ which are prime to $p$ (all of which have order two and are linearly independent over $\Z/2\Z$).  Let $\mathfrak{D}=Q_1\cdots Q_t$ and $D=\mathrm{Norm}(\mathfrak{D})$, and define
$$
\mathcal{R}_\ram=\{\Frob_I\mid \mathfrak{D} \subset I\subset \co\} \subset\mathcal{G}^*
$$
where $\Frob_I\in\mathcal{G}^*$ is the Frobenius of $I$.  The quotient map $\mathcal{G}^* \map{}\mathcal{G}$ takes $\mathcal{R}_\ram$ bijectively to $\mathcal{G}_\ram$.
 Fix a subset $\mathcal{R}\subset \mathcal{G}^*$ whose elements represent the distinct cosets $\mathcal{G}_\tors/\mathcal{G}_\ram$, so that $\mathcal{G}^* \map{}\mathcal{G}$ takes the set
  $$
  \mathcal{R}_\tors\define \{\sigma\tau\mid \sigma\in \mathcal{R},\tau\in\mathcal{R}_\ram\} \subset\mathcal{G}^*
  $$
bijectively to $\mathcal{G}_\tors$.  Abbreviate 
$$
\Phi=\Phi_1=\Gamma_0(N)\cap\Gamma_1(p)
$$ 
and  write $X(\Phi)$ instead of $X_1$.  Set
$$
h_s^* = h_{p^s,1}\in X(\Phi)(L_s^*).
$$
and define
$$
\tilde{h}^*_s=(E_{p^s,1}, \tilde{\mathfrak{n}}_{p^s,1}, \pi_{p^s,1}) 
 \in X(\tilde{\Phi})(L_s^* ),
$$
where
$$
\tilde{\mathfrak{n}}_{p^s,1}=E_{p^s,1}[\mathfrak{DN}\cap \co_{p^{s+1}}]
\hspace{1cm}
\tilde{\Phi}=\Gamma_0(DN)\cap \Gamma_1(p),
$$  
so that  under the forgetful degeneracy map $X(\tilde{\Phi})\map{}X(\Phi)$ we have $\tilde{h}^*_s\mapsto h^*_s$.

Fix a finite set $\mathcal{S}$ of degree two primes of $K$ and assume that for every $v\in \mathcal{S}$, $\mathrm{Norm}(v)\equiv 1\pmod{Np}$.  Note that no $v\in \mathcal{S}$ divides $D$, and all $v\in \mathcal{S}$ are split completely in $L^*_\infty$. For each $v\in \mathcal{S}$ fix a place $\overline{v}$ of $\overline{\Q}$ above $v$,  let $\F_v$ be the residue field of $v$, and let  $X^\mathrm{ss}_v(\tilde{\Phi})$  denote the set of supersingular points on the reduction of $X(\tilde{\Phi})$ at $v$.  Our  assumption that $\mathrm{Norm}(v)\equiv 1\pmod{p}$  implies that all supersingular points have residue field $\F_v$ (if $\ell$ is the prime below $v$ then the square of the absolute Frobenius acts as $\langle \ell\rangle=\langle \pm 1\rangle$ on the supersingular locus, as in the proof of Proposition \ref{congruence}). The set $X^\mathrm{ss}_v(\Phi)$ is defined similarly.  Given $v\in\mathcal{S}$ and any $\mathcal{G}^*$-conjugate  $h$ of $\tilde{h}^*_s$,  we may define the \emph{$v$-reduction} of $h$
\begin{equation}\label{wee reduction}
\mathrm{red}_{v}(h)\in X^\mathrm{ss}_v(\tilde{\Phi}),
\end{equation}
to be the reduction at $\overline{v}$ of $h$.  Define
$$
\mathrm{Red}_v(h)\in X_v^\mathrm{ss}(\tilde{\Phi})^\mathcal{R}
$$
to be the $|\mathcal{R}|$-tuple with $\mathrm{red}_v(h^\sigma)$ in the $\sigma$ component for each $\sigma\in \mathcal{R}$.

\begin{Thm}[Cornut-Vatsal]\label{cv}
Let $G\subset \mathcal{G}^*$ be a compact open subgroup and let  $G\tilde{h}^*_s$ be the $G$-orbit of $\tilde{h}_s^*$. For all $s\gg 0$  the function
$$
G\tilde{h}_s^* \map{\prod_{v\in\mathcal{S}}\mathrm{Red}_v} 
 \prod_{v\in\mathcal{S}}X^\mathrm{ss}_v(\tilde{\Phi})^\mathcal{R}
$$
is surjective.
\end{Thm}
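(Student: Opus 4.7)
The plan is to establish Theorem \ref{cv} by transporting the geometric problem into an adelic double coset space via the theory of supersingular reduction and the main theorem of complex multiplication, then invoking the Cornut--Vatsal equidistribution machinery in that setting.

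First I would invoke the Deuring--Serre parametrization of the supersingular locus. If $\ell$ is the rational prime below $v \in \mathcal{S}$, then there is a definite quaternion algebra $B_\ell/\Q$ ramified exactly at $\ell$ and $\infty$, and an Eichler order $R_\ell \subset B_\ell$ whose level is dictated by the $\tilde{\Phi}$-level structure (i.e. level $DNp/\ell$), such that
$$
X^\mathrm{ss}_v(\tilde{\Phi}) \iso B_\ell^\times \backslash \widehat{B}_\ell^\times / \widehat{R}_\ell^\times.
$$
The hypothesis $\mathrm{Norm}(v) \equiv 1 \pmod{Np}$ guarantees that this identification is defined over $\F_v$ (so no twisting by the $p$-power Frobenius intervenes), and since $v$ is inert in $K$ the CM order $\co_{p^{s+1}}$ embeds optimally into $R_\ell$, attaching to $\tilde{h}^*_s$ a distinguished class $[x_{s,v}] \in B_\ell^\times \backslash \widehat{B}_\ell^\times / \widehat{R}_\ell^\times$ via a fixed optimal embedding $\iota_v: K \hookrightarrow B_\ell$.

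Next, the main theorem of complex multiplication (exactly as used in the proof of Lemma \ref{yoke}) shows that the action of $\mathcal{G}^*$ on $\tilde{h}^*_s$ factors through the Artin reciprocity map and becomes, after reduction at $v$, translation of $x_{s,v}$ by the image of $\widehat{K}^\times$ under $\iota_v$. Consequently the image of the composite
$$
G\tilde{h}_s^* \xrightarrow{\prod_{v}\mathrm{Red}_v} \prod_{v\in\mathcal{S}}X^\mathrm{ss}_v(\tilde{\Phi})^\mathcal{R}
$$
is the orbit of the tuple $\bigl(\iota_v(\sigma)\cdot x_{s,v}\bigr)_{v,\sigma}$ under $\widehat{K}_G^\times$, the preimage of $G$ under Artin reciprocity. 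The $\mathcal{R}$-index corresponds to translating the base point by a fixed finite set of elements of $\widehat{K}^\times$ (coming from lifts of the torsion coset representatives), and surjectivity thereby reduces to a simultaneous equidistribution statement for the $\widehat{K}_G^\times$-orbit of a single adelic tuple inside
$$
\prod_{v \in \mathcal{S}} \bigl( B_{\ell_v}^\times \backslash \widehat{B}_{\ell_v}^\times / \widehat{R}_{\ell_v}^\times \bigr)^{\mathcal{R}}.
$$

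Surjectivity on this product is exactly the sort of statement proved by Cornut and Vatsal in \cite{C-V,vatsal02} (and by Cornut in \cite{cornut02}), which combines strong approximation on the quaternion algebras $B_{\ell_v}$ with a mixing/equidistribution input --- ultimately rooted in Ratner's theorem on unipotent flows, or equivalently in a spectral-gap bound --- to show that as the conductor of $\iota_v$ grows with $s$ the $\widehat{K}_G^\times$-orbit becomes uniformly distributed in each factor. The main obstacle is the simultaneous statement: one must rule out conspiracies between the different primes $v \in \mathcal{S}$ and between the different conjugates in $\mathcal{R}$. This is overcome using the fact that the primes of $\mathcal{S}$ are coprime to $NpD$ and split completely in $L^*_\infty$ (so the local reduction components at distinct $v$ are geometrically independent in the product), combined with strong approximation for $B_{\ell_v}^\times$ away from the ramification and the Eichler orders, which lets the product equidistribution be bootstrapped from the single-prime, single-conjugate case. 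For $s$ sufficiently large the orbit thus fills the finite product, yielding the claimed surjectivity.
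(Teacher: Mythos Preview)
The paper's proof is simply a citation: the result is stated as a special case of \cite[Theorem 3.5]{C-V}, with the single additional remark that the set $\mathcal{R}$ satisfies the hypotheses of that theorem by the discussion in \cite[\S 3.3.2]{C-V}. Your proposal goes further and sketches the actual machinery behind the Cornut--Vatsal theorem itself: the Deuring--Serre parametrization of the supersingular locus by quaternionic double cosets, the translation of the Galois action into adelic multiplication via the main theorem of CM, and the reduction to a simultaneous equidistribution statement ultimately established using Ratner's theorem. This is a reasonable outline of what \cite{C-V} does, and it is useful to have in mind, but it is not what the paper does --- the paper treats the result as a black box from the literature.

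Two points are worth flagging. First, a minor slip: the primes $\ell$ below $v\in\mathcal{S}$ are prime to $DNp$ (they satisfy $\ell\equiv 1\pmod{Np}$ and are inert in $K$, hence do not divide $D$), so the Eichler order has level $DNp$, not $DNp/\ell$. Second, and more substantively, the genuine content beyond citing \cite{C-V} is the verification that the particular set $\mathcal{R}$ of representatives for $\mathcal{G}_\tors/\mathcal{G}_\ram$ satisfies the independence hypothesis required by \cite[Theorem 3.5]{C-V}; this is exactly the condition that rules out the ``conspiracies'' you allude to between different $\sigma\in\mathcal{R}$. You wave at this using strong approximation and the coprimality assumptions on $\mathcal{S}$, but the actual verification (carried out in \cite[\S 3.3.2]{C-V}) is more delicate and uses the specific structure of $\mathcal{G}_\tors/\mathcal{G}_\ram$. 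Your sketch is a fair summary of the ideas, but if you intend it as a self-contained proof rather than a gloss on the citation, that step needs to be made precise.
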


\begin{proof}
This is a special case of \cite[Theorem 3.5]{C-V}.  The set  $\mathcal{R}$ satisfies the hypotheses of that theorem by the discussion of \cite[\S 3.3.2]{C-V}.
\end{proof}

For each positive divisor $d$ of $D$  there is a degeneracy map 
$$
\lambda_d: X(\tilde{\Phi})\map{}X(\Phi)
$$
defined on moduli by $(E,C,P)\mapsto (E/C', C'', f(P))$ where $C$ is a cyclic order $DN$ subgroup of $E$,  $P$ is a point of exact order $p$ of $E$, $C'\subset C$ is the unique order $d$  subgroup of $C$, $f:E\map{}E/C'$ is the quotient map, and $C''\subset f(C)$ is the unique order $N$ subgroup.

\begin{Lem}\label{galois degen}
Let $d$ be a positive divisor of $D$ and let  $\sigma\in\mathcal{R}_\ram$ be the Frobenius of the unique ideal of $\co$ of norm $d$.  Then $\lambda_d(\tilde{h}^*_s)= (h^*_s)^{ \sigma}.$
\end{Lem}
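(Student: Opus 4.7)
The plan is to realize both $\lambda_d(\tilde{h}^*_s)$ and $(h^*_s)^\sigma$ as the same triple in $X(\Phi)(L^*_s)$ by working in the complex uniformization $E_{p^s,1}(\C)=\C/\co_{p^{s+1}}$ and invoking the main theorem of complex multiplication in the same style as the proof of Lemma \ref{yoke}. Let $I\subset\co$ be the unique ideal of norm $d$ with $\mathfrak{D}\subset I$, and set $\mathfrak{a}=I\cap\co_{p^{s+1}}$; since $I$ (hence $d$) is prime to the conductor $p^{s+1}$, $\mathfrak{a}$ is an invertible proper $\co_{p^{s+1}}$-ideal of norm $d$.

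First I compute $\lambda_d(\tilde{h}^*_s)$. The unique order-$d$ subgroup $C'\subset\tilde{\mathfrak{n}}_{p^s,1}$ is the $\mathfrak{a}$-torsion $E_{p^s,1}[\mathfrak{a}]$, and in the complex uniformization this is $\mathfrak{a}^{-1}\co_{p^{s+1}}/\co_{p^{s+1}}$; so the quotient isogeny $f\colon E_{p^s,1}\to E_{p^s,1}/C'$ is analytically the projection $\C/\co_{p^{s+1}}\to\C/\mathfrak{a}^{-1}\co_{p^{s+1}}$. Because $\mathfrak{N}$ is coprime to $I$, $f$ restricts injectively to $\mathfrak{n}_{p^s,1}$ and $f(\mathfrak{n}_{p^s,1})$ is the unique subgroup of order $N$ in the cyclic group $f(\tilde{\mathfrak{n}}_{p^s,1})$; because $p$ is coprime to $d$, $f$ carries $\pi_{p^s,1}$ injectively to a point of exact order $p$. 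Thus
$$
\lambda_d(\tilde{h}^*_s)=\bigl(\C/\mathfrak{a}^{-1}\co_{p^{s+1}},\;f(\mathfrak{n}_{p^s,1}),\;f(\pi_{p^s,1})\bigr).
$$

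Next I realize $(h^*_s)^\sigma$ via complex multiplication. I choose a finite idele $x\in\widehat{K}^\times$ whose local components generate $\mathfrak{a}$ at the primes dividing $d$ and whose components at all other finite primes, including those above $p$, equal $1$. The arithmetic Artin symbol of $x$ then restricts to $\sigma=\Frob_I$ on $H_{p^{s+1}}$; moreover $\mathrm{N}_{K/\Q}(x)$ has trivial $p$-component, so this choice of lift acts trivially on $\mu_{p^\infty}$, matching the fact that $\sigma\in\mathcal{G}^*=\Gal(L_\infty^*/K)$ is to be viewed as a specific element of the extended Galois group. The main theorem of CM then yields
$$
E_{p^s,1}^\sigma(\C)\cong\C/x^{-1}\co_{p^{s+1}}=\C/\mathfrak{a}^{-1}\co_{p^{s+1}},
$$
so under this identification the underlying elliptic curve of $(h^*_s)^\sigma$ agrees with the target of $f$.

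Finally I match the level structures. The subgroup $\mathfrak{n}_{p^s,1}^\sigma=E^\sigma_{p^s,1}[\mathfrak{N}\cap\co_{p^{s+1}}]$ corresponds under the CM isomorphism to the $(\mathfrak{N}\cap\co_{p^{s+1}})$-torsion of $\C/\mathfrak{a}^{-1}\co_{p^{s+1}}$, which is precisely $f(\mathfrak{n}_{p^s,1})$. For the $\Gamma_1(p)$-level structure I repeat the computation at the end of Lemma \ref{yoke} with the new idele $x$: writing $x_p=\alpha+p\beta$ we now have $\alpha=1$, so the commutative diagram there shows that $\sigma$ acts on $\ker(j_{p^s,1})$ trivially up to the sign $\pm 1\in\Aut_{\overline{\Q}}(E_{p^s,1})$, giving $\pi_{p^s,1}^\sigma=f(\pi_{p^s,1})$ under our identification. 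Combining the three compatibilities gives the desired equality. The main bookkeeping hurdle is simply keeping the conventions straight: ensuring that the chosen $x$ realizes the specific lift $\sigma\in\mathcal{G}^*$ (not its inverse or another lift) and that the CM isomorphism points in the direction compatible with the geometric isogeny $f$.
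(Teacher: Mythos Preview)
Your proof is correct and follows essentially the same route as the paper: compute $\lambda_d(\tilde h^*_s)$ explicitly in the complex uniformization as a triple over $\C/\mathfrak a^{-1}\co_{p^{s+1}}$, choose an idele $x$ with trivial $p$-component generating the ideal $I$ so that its Artin symbol is $\sigma$, and use the main theorem of CM to identify $(h^*_s)^\sigma$ with the same triple. The paper carries this out by writing the two triples side by side (with $\mathfrak N_{p^{s+1}}$-torsion written as $(x\mathfrak N_{p^{s+1}})^{-1}/x^{-1}\co_{p^{s+1}}$) rather than invoking Lemma~\ref{yoke}; your appeal to that lemma is fine, but the phrase ``up to the sign $\pm 1$'' is unnecessary here and could mislead --- since $x_p=1$ exactly, the CM isomorphism matches $\pi_{p^s,1}^\sigma$ and $f(\pi_{p^s,1})$ on the nose, which is what you need for equality of points on $X(\Phi)$ (a genuine sign flip on the $\Gamma_1(p)$-structure would give a different point).
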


\begin{proof}
Let  $\mathfrak{D}_{p^{s+1}}=\mathfrak{D}\cap\co_{p^{s+1}}$ and $\mathfrak{N}_{p^{s+1}}=\mathfrak{N}\cap\co_{p^{s+1}}$, and let $x\in\widehat{K}^\times$ be a finite idele which is a uniformizer at every prime divisor of $d$ and has trivial component at all other primes.  In particular $\sigma$ is the Artin symbol of $x$. The effect of the degeneracy map $\lambda_d$ on $\tilde{h}^*_s$ is
\begin{eqnarray*}
\tilde{h}^*_s& \iso &(\C/\co_{p^{s+1}}, \ (\mathfrak{D}_{p^{s+1}} \mathfrak{N}_{p^{s+1}} )^{-1}/\co_{p^{s+1}}, \  p^s\varpi ) \\
& \mapsto  &   (\C/x^{-1}\co_{p^{s+1}}, \  (x  \mathfrak{N}_{p^{s+1}} )^{-1}/x^{-1}\co_{p^{s+1}}, \  p^s\varpi ).
\end{eqnarray*}
On the other hand, the main theorem of complex multiplication  provides an isomorphism of elliptic curves with $\Gamma_0(N)\cap\Gamma_1(p)$ level structure
\begin{eqnarray*}
(h^*_s)^\sigma  &\iso &  (\C/\co_{p^{s+1}}, \  \mathfrak{N}_{p^{s+1}}^{-1}/\co_{p^{s+1}}, \  p^s\varpi )^\sigma  \\
&\iso&  (\C/x^{-1}\co_{p^{s+1}}, \  (x  \mathfrak{N}_{p^{s+1}})^{-1}/x^{-1}\co_{p^{s+1}}, \  p^sx^{-1} \varpi ).
\end{eqnarray*}
As $x$ has trivial component at $p$, $p^s\varpi$ and $p^sx^{-1}\varpi$ determine the same element of 
$$
\widehat{K}/x^{-1}\widehat{\co}_{p^{s+1}} = K/x^{-1}\co_{p^{s+1}}\iso E^\sigma_{p^s,1}(\C)_\tors,
$$
and so $\lambda_d(\tilde{h}_s^*)=(h^*_s)^\sigma$.
\end{proof}

\begin{Prop}[Ribet]\label{P:ribet}
Fix a $v\in\mathcal{S}$ and let $M_v(\Phi)$ and $M_v(\tilde{\Phi})$  denote the $\Z$-modules of degree zero divisors on  $X^\mathrm{ss}_v(\Phi)$ and $X^\mathrm{ss}_v(\tilde{\Phi})$, respectively. The product of   degeneracy maps
$$
\prod_{d\mid D}\lambda_{d,*}:  M_v(\tilde{\Phi})  \map{}  \prod_{d\mid D}M_v(\Phi)
$$ 
is surjective.
\end{Prop}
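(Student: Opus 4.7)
The plan is to transfer the statement, via the Deuring correspondence, into a question about double cosets of a definite quaternion algebra, reduce inductively to a one-prime local problem, and settle that local problem by an elementary combinatorial argument on the Bruhat--Tits tree.

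For the setup, let $\ell$ be the rational prime below $v$. Because $v$ has residue degree two in $K$, the prime $\ell$ is inert in $K$, hence coprime to $D$ (whose prime divisors ramify in $K$); the hypothesis $\mathrm{Norm}(v) \equiv 1 \pmod{Np}$ forces $\ell \nmid Np$ as well. Let $B/\Q$ be the definite quaternion algebra ramified precisely at $\ell$ and $\infty$, and fix Eichler orders $\widetilde{R} \subset R \subset B$ of levels $DNp$ and $Np$ respectively, agreeing away from the primes dividing $D$ and differing at each $q \mid D$ by passage from $R_q = M_2(\Z_q)$ to its standard level-$q$ Iwahori suborder. Deuring's theorem then yields Hecke-equivariant bijections
$$X_v^{\mathrm{ss}}(\Phi) \iso B^\times \backslash \widehat{B}^\times / \widehat{R}^\times, \qquad X_v^{\mathrm{ss}}(\widetilde{\Phi}) \iso B^\times \backslash \widehat{B}^\times / \widehat{\widetilde{R}}{}^\times,$$
under which each $\lambda_d$ is induced by right-translation by an adelic element $\eta_d \in \widehat{B}^\times$ supported at the primes dividing $d$ (trivial at primes where one keeps the orientation and the Atkin--Lehner uniformizer at primes where one swaps).

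I would then induct on the number of prime divisors of $D$, which reduces the problem to the case $D = q$ prime. In this single-prime case the two degeneracy maps $\lambda_1, \lambda_q$ are identified, at the $q$-component, with the source and target maps $s$, $t$ between oriented edges and vertices of the Bruhat--Tits tree $T_q$ of $\mathrm{PGL}_2(\Q_q)$: vertices correspond to $R_q^\times$-cosets, and oriented edges to $\widetilde{R}_q^\times$-cosets. The local claim is that the induced map
$$(s_*, t_*) : M_v(\widetilde{\Phi}) \map{} M_v(\Phi) \oplus M_v(\Phi)$$
is surjective. Given degree-zero target data $(D_1, D_2)$, I would first realize $D_1 = s_* \widetilde{D}_0$ by lifting each vertex in the support of $D_1$ to any outgoing edge; this automatically forces $\deg t_* \widetilde{D}_0 = 0$, so the discrepancy $E = t_* \widetilde{D}_0 - D_2$ has degree zero. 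One then corrects $E$ by adding differences of edges sharing a common source but with distinct targets, which contribute zero to $s_*$ and span the entire degree-zero submodule on the target side because the global Brandt-type graph is connected and $(q+1)$-regular with $q + 1 \ge 3$.

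The main obstacle will be that this combinatorial argument lives on the local tree $T_q$, whereas the required statement concerns the finite global double coset set $B^\times \backslash \widehat{B}^\times / \widehat{\widetilde{R}}{}^\times$. One handles this by working one prime $q \mid D$ at a time with the adelic components away from $q$ held fixed: the $B^\times$-action preserves the source--target fibration at $q$, and the finite isotropy groups coming from global units of $B$ do not obstruct divisor-theoretic surjectivity over $\Z$. This is the essential content of Ribet's original argument, and combined with the inductive reduction yields the claimed global surjectivity of $\prod_{d \mid D} \lambda_{d,*}$.
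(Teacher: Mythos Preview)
Your overall strategy---Deuring correspondence, induction on prime divisors of $D$, then a graph-connectedness argument at a single prime $q$---is the same as the paper's. But there is a genuine gap: your Deuring bijection is set up for the wrong level structure. An Eichler order of level $Np$ parametrizes supersingular points with $\Gamma_0(Np)$-structure, not $\Gamma_0(N)\cap\Gamma_1(p)$-structure; the double coset set $B^\times\backslash\widehat{B}^\times/\widehat{R}^\times$ you write down is too small to be $X_v^{\mathrm{ss}}(\Phi)$. To get the right set you must replace $\widehat{R}^\times$ by a smaller open compact at $p$ (the preimage of unipotents under $R_p^\times\to\mathrm{GL}_2(\F_p)$, say), and then the crucial input---connectedness of the global Brandt graph at $q$---is no longer the content of Ribet's original theorem.

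This is precisely the point the paper isolates: it notes that the $\Gamma_0(p)$ case is literally Ribet's Theorem 3.15, and then spends the bulk of the proof extending to $\Gamma_1(p)$. The extra work is a strong-approximation argument showing that for any $t\in(\Z/p\Z)^\times$ there is an endomorphism of a supersingular $(A,C,P)$ of degree an odd power of $q$ which preserves $C$ and sends $P\mapsto tP$. Composing this with Ribet's $\Gamma_0(ND'p)$-isogeny yields a $q$-power isogeny preserving the full $\Gamma_0(ND')\cap\Gamma_1(p)$-structure, hence connectedness of the correct graph. Your sketch asserts this connectedness (``the global Brandt-type graph is connected'') but supplies only the $\Gamma_0$ argument; you need to add the character-twisting step.
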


\begin{proof}
By induction on the number of prime divisors of $D$ it suffices to  prove that the product of degeneracy maps
\begin{equation}\label{ribet degen display} 
M_v(\Gamma_0(ND'q)\cap\Gamma_1(p))  \map{}  M_v(\Gamma_0(ND')\cap\Gamma_1(p))
\times M_v(\Gamma_0(ND')\cap\Gamma_1(p))
\end{equation}
is surjective for any divisor $D' q \mid D$ with $q$ prime  (using the obvious generalization of the notation $M_v(\Phi)$). If one replaces $\Gamma_1(p)$ by $\Gamma_0(p)$ then this is precisely \cite[Theorem 3.15]{ribet}.  We give the full proof in the general case.

Let $A$ be a supersingular elliptic curve over $\F_v$ endowed with a $\Gamma_0(ND')$-level structure, $C$, and a $\Gamma_1(p)$-level structure,  $P\in A[p]$.  The first  claim is that for any $t\in(\Z/p\Z)^\times$ there is an endomorphism of $A$ which has degree an odd power of $q$,  restricts to an isomorphism of $C$, and takes $P$ to $tP$. Indeed let 
$$
S=\{ f\in\End_{\F_v}(A) \mid f(C)\subset C, f(P)\in\Z\cdot P \}.
$$
Then $S$ is a level $ND'p$ Eichler order in a rational quaternion algebra $B$ of discriminant $\ell=\mathrm{char}(\F_v)$.  Set $\widehat{S}=S\otimes_\Z \widehat{\Z}$ and define $\widehat{B}$ similarly.  The action of $S\otimes\Z_p$ on the subgroup generated by $P$ determines a surjective character 
$$
\psi: \widehat{S}^\times\map{}(S\otimes\Z_p)^\times  \map{}(\Z/p\Z)^\times
$$
whose kernel is a compact open subgroup  $U\subset \widehat{B}^\times$.   Pick 
$\tau \in \widehat{S}^\times $  such that $\psi(\tau)=t$ and such that  $\tau$ has  trivial components away from $p$.   Using strong approximation as in \cite[Proposition 3.1]{Jordan-Livne} we may write $\tau=\beta u b$ in which $\beta\in B^\times$, $u\in U$ has trivial component at $q$, and $b\in\widehat{B}^\times$ has trivial components away from $q$ and reduced norm satisfying $\ord_q(\mathrm{N}( b))=-1$.  These conditions imply that $\beta$ lies in the  $\Z[1/q]$-order $S[1/q]$, has reduced norm $q$, and satisfies $\psi(\beta)=\psi(\tau)=t$.  Multiplying $\beta$ by a sufficiently large power  of $q$ gives an element  $f=q^e\beta \in S$ having reduced norm ($=$ degree) an odd power of $q$, and we are free to assume that $q^e\equiv 1\pmod{p}$.  But then
$$
f(P)= \psi( q^e \beta) \cdot P = \psi(\beta)\cdot P=\psi(\tau) \cdot P=  t\cdot P.
$$  
As the degree of $f$ is prime to $ND'$, $f$ restricts to an isomorphism of $C$ and so satisfies the desired properties.

Now suppose we are given two supersingular elliptic curves $A$ and $A'$ over $\F_v$, each endowed with a $\Gamma_0(ND')\cap\Gamma_1(p)$-level structure.  According to \cite[Lemma 3.1.7]{ribet} there is an isogeny $A\map{}A'$ having degree an odd power of $q$ and preserving the $\Gamma_0(ND'p)$-level structures.  Pre-composing this isogeny with an endomorphism of $A$ as in the preceding paragraph we obtain an isogeny $A\map{}A'$ of degree an \emph{even} power of $q$ which takes the $\Gamma_0(ND')\cap\Gamma_1(p)$-level structure on $A$ isomorphically to that on $A'$.  We may factor this isogeny as
$$
A_0\map{\pi_0} A_1 \map{\pi_1}\cdots \map{} A_{2i-1}\map{\pi_{2i-1}}A_{2i}
$$
in which $A=A_0$, $A'=A_{2i}$, and each $\pi_j$ has degree $q$.  We define  $\Gamma_0(ND')\cap\Gamma_1(p)$-level structures on each $A_j$ in such a way that the $\pi_j$'s preserve the level.  Each $\pi_j$ and its dual $\pi_j^\vee$ then define $\Gamma_0(q)$-level structures on $A_j$ and $A_{j+1}$, respectively.   Abusing notation, we write $\pi_j$ to indicate $A_j$ with its $\Gamma_0(ND')\cap \Gamma_1(p)$-level structure, together with its $\Gamma_0(q)$-level structure determined by $\pi_j$.  Similarly we write $\pi_j^\vee$ for $A_{j+1}$ with its $\Gamma_0(ND')\cap \Gamma_1(p)$-level structure together with its $\Gamma_0(q)$-level structure determined by $\pi_j^\vee$.  Then 
$$
\pi_0-\pi_1^\vee+\pi_2-\pi_3^\vee+\cdots+\pi_{2i-2}-\pi^\vee_{2i-1}
$$
is an element of $M_v(\Gamma_0(ND'q)\cap\Gamma_1(p))$, and a simple calculation shows that the image of this element under (\ref{ribet degen display}) is the pair $(A-A',0)$, where we view $A$ and $A'$ (with their added level structures) as divisors on $X(\Gamma_0(ND')\cap\Gamma_1(p))$.  Replacing $\pi_j$ by its dual everywhere gives a similar element of $M_v(\Gamma_0(ND' q)\cap\Gamma_1(p))$ whose image under  (\ref{ribet degen display}) is $(0,A-A')$.  As elements of this form generate the right hand side of (\ref{ribet degen display}), we are done.
\end{proof}

\begin{Thm}[Ihara]\label{T:ihara}
Let $J_1$ be the Jacobian of $X(\Phi)=X_1$ and write $\underline{J}_1$ for the N\'eron model of $J_1$ over $\Spec(\Z)$.  For every $v\in \mathcal{S}$ the cokernel of the natural map  $M_v(\Phi)\map{}\underline{J}_1(\F_v)$ has order dividing $\phi(N)$.
\end{Thm}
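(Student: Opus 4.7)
The plan is to deduce this from the classical form of Ihara's theorem (for the level-$\Gamma_0(N)$ curve at primes of good reduction) by bootstrapping along the natural degeneracy map $X_1 \to X_0(N)$, with the $\phi(N)$ factor arising purely from the image of the reduced norm on the level-$N$ part of the quaternion side. Throughout I will exploit the fact that $\ell \nmid Np$ (where $\ell$ is the rational prime below $v$), so that $X_1$ has good reduction at $v$ and $\underline{J}_1(\F_v)$ is a genuine finite group of $\F_v$-points of a smooth abelian variety.

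First I would set up the quaternionic side. Let $B$ be the definite quaternion algebra over $\Q$ ramified exactly at $\ell$ and $\infty$, and let $S \subset B$ be an Eichler order of level $N$ with the auxiliary $\Gamma_1(p)$-structure at $p$ that corresponds to $\Phi = \Gamma_0(N) \cap \Gamma_1(p)$. By the Deligne-Rapoport uniformization, the supersingular locus $X^{\mathrm{ss}}_v(\Phi)$ is canonically in bijection with the double coset space $B^\times \backslash \widehat{B}^\times / \widehat{S}^\times$. In particular $M_v(\Phi)$ is identified with the degree-zero part of the Brandt module attached to $S$.

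Next I would compare this with $\underline{J}_1(\F_v)$. Using the Honda-Tate description of the $\F_v$-points together with the Eichler trace formula (or equivalently, the specialization isomorphism plus the fact that every class in $\underline{J}_1(\F_v)$ lifts to a divisor class on $X_{1,\overline{\F}_v}$ that can be slid into the supersingular locus via chains of $\ell$-isogenies), I would show that every element of $\underline{J}_1(\F_v)$ is represented by a divisor supported on $X^{\mathrm{ss}}_v(\Phi)$ modulo principal divisors. The obstruction to such a representation existing on the nose is measured by the cokernel of the reduced-norm map $\mathrm{nrd}\colon \widehat{S}^\times \to \widehat{\Z}^\times$ via strong approximation in the reduced-norm-one subgroup of $B^\times$. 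At primes not dividing $N$ this cokernel is trivial (the Eichler order is maximal at primes away from $N$, and at $p$ the $\Gamma_1(p)$-structure contributes no obstruction because $\mathrm{nrd}$ is already surjective on the relevant subgroup of $\mathrm{GL}_2(\Z_p)$); at primes dividing $N$, the Eichler level-$N$ structure forces $\mathrm{nrd}(\widehat{S}^\times)$ to land in the kernel of $\widehat{\Z}^\times \to (\Z/N\Z)^\times$, giving a cokernel dividing $|(\Z/N\Z)^\times| = \phi(N)$.

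The main obstacle is the precise matching of the Jacobian $\underline{J}_1(\F_v)$ with the Brandt module: one must identify the cokernel of $M_v(\Phi) \to \underline{J}_1(\F_v)$ with the norm-cokernel above, rather than just showing that both groups are of comparable size. The cleanest way to do this is to cite the case $\Phi = \Gamma_0(N)$ (which is standard, e.g.\ \cite{ribet} or Deligne-Rapoport) and reduce to it: the forgetful degeneracy $X_1 \to X_0(N)$ has the property that its kernel on Jacobians is annihilated by $\phi(p)$, which is coprime to $\phi(N)$; combined with Proposition \ref{P:ribet}, which controls the fibers of degeneracy maps on the supersingular side, the level-$N$ version of Ihara's theorem on $X_0(N)$ propagates cleanly to $X_1$ with cokernel dividing $\phi(N)$.
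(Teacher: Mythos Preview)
Your proposal has a genuine gap: the source of the factor $\phi(N)$ is misidentified, and neither of your two suggested routes produces it. In approach (A), the claim that an Eichler order of level $N$ has reduced norm landing in the kernel of $\widehat{\Z}^\times\to(\Z/N\Z)^\times$ is false: locally at $q\mid N$ the order contains diagonal matrices $\mathrm{diag}(a,1)$ with $a\in\Z_q^\times$, so $\mathrm{nrd}$ is surjective onto $\Z_q^\times$. (A norm obstruction of size $\phi(N)$ would require a $\Gamma_1(N)$-type structure at $N$, which is not present here.) In approach (B), the assertion that $\phi(p)=p-1$ is coprime to $\phi(N)$ is not among the hypotheses (only $p\nmid\phi(N)$ is assumed), and in any case it is unclear how passing through $X_0(N)$, where Ihara's theorem gives a \emph{trivial} cokernel, would manufacture a bound of $\phi(N)$ rather than some power of $p-1$. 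More fundamentally, you have not justified why the cokernel of $M_v(\Phi)\to\underline{J}_1(\F_v)$ should be computable by strong approximation and reduced norms at all; this cokernel is a class-field-theoretic invariant of the curve over $\F_v$, not directly a double-coset count.

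The paper's argument is quite different and more direct. One identifies the cokernel with the Galois group of the maximal unramified abelian cover $C\to X(\Phi)_{/\F_v}$ in which every supersingular point splits completely. Ihara's theorem (in its original form for the full-level curve $X(Np)$) says that $X_{\mathrm{Ih}}(Np)$, a geometric component of $X(Np)_{/\F_v}$, admits no nontrivial unramified cover in which all supersingular points split; since $\mathrm{N}(v)\equiv 1\pmod{Np}$ this component is already defined over $\F_v$. Hence $C$ is dominated by $X_{\mathrm{Ih}}(Np)$, and being abelian it is dominated by $X_1(Np)$. The bound $\phi(N)$ then falls out as $[\Phi:\Gamma_1(Np)]=[\Gamma_0(N)\cap\Gamma_1(p):\Gamma_1(N)\cap\Gamma_1(p)]=\phi(N)$. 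So the $\phi(N)$ arises from the $\Gamma_0(N)$-versus-$\Gamma_1(N)$ discrepancy in the covering $X_1(Np)\to X(\Phi)$, not from any reduced-norm obstruction on the quaternion side.
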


\begin{proof}
This is similar to \cite[Proposition 3.6]{prasad} and  to \cite[\S 4.2]{howard-cycles}, and we give a sketch of the proof. In all that follows, all geometric objects are defined over $\F_v$ unless otherwise indicated. The cokernel of the map in question is isomorphic to the Galois group of the maximal unramified abelian extension $C/X(\Phi)$ in which all supersingular points split completely. Ihara \cite{ihara} defines a curve $X_\mathrm{Ih}(Np)$ whose base change to $\overline{\F}_v$ is isomorphic to a connected component of the modular curve $X(Np)_{/\overline{\F}_v}$ classifying elliptic curves with full  $\Gamma(Np)$-level structure.  Owing to our assumption that  $\mathrm{N}(v)\equiv 1\pmod{Np}$, the  modular curve $X(Np)$ has all of its geometric components already defined over $\F_v$, and Ihara's curve is isomorphic (over $\F_v$) to every component. Fixing one such isomorphism, Ihara's curve admits a  degeneracy map to $X(\Phi)$.  By the main result of \cite{ihara} all supersingular points on $X_\mathrm{Ih}(Np)$ are defined over $\F_v$ and there are no  unramified extensions of $X_\mathrm{Ih}(Np)$ in which  all supersingular points split completely.   It follows that $C$ is a subextension of $X_\mathrm{Ih}(Np)$, and being abelian $C$ is then a subextension of $X_1(Np)$.  The theorem follows.
\end{proof}

Let  $\mathrm{Div}^0(\mathcal{G}^* h^*_s)$ denote the group of degree zero divisors on $X(\Phi)_{/L_s^*}$  which are supported on the  $\mathcal{G}^*$ orbit of $h_s^*$. For each $v\in\mathcal{S}$ define 
$$
\mathrm{Red}_v:J_1(L_s^*)\map{} \underline{J}_1(\F_v)^{\mathcal{R}_\tors}
$$
to be the map taking $P\in J_1(L_s^*)$ to the $|\mathcal{R}_\tors|$-tuple having the reduction at $\overline{v}$ of $P^\sigma$ in the $\sigma\in \mathcal{R}_\tors$ component.

\begin{Lem}\label{some surjectivity}
For $s\gg0$ the composition 
\begin{equation}\label{rnv I}
\mathrm{Div}^0(\mathcal{G}^*h^*_s)  \otimes\co_F\map{}  J_1(L^*_s )\otimes\co_F  \map{ \oplus_{v\in\mathcal{S}} \mathrm{Red}_v}  \oplus_{v \in \mathcal{S} }  \underline{J}_1(\F_v)^{\mathcal{R}_\tors} \otimes\co_F
\end{equation}
is surjective.
\end{Lem}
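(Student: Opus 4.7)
The plan is to combine three key inputs — Ihara's theorem (Theorem \ref{T:ihara}), Ribet's proposition (Proposition \ref{P:ribet}), and the Cornut--Vatsal theorem (Theorem \ref{cv}) — together with the running hypothesis $p \nmid \phi(N)$.  First I would upgrade each input.  Since $\phi(N) \in \co_F^\times$, Ihara implies that $M_v(\Phi) \otimes \co_F \twoheadrightarrow \underline{J}_1(\F_v) \otimes \co_F$ is surjective for every $v \in \mathcal{S}$.  Composing this coordinate-by-coordinate with Ribet and using the bijection $\mathcal{R}_\ram \leftrightarrow \{d \mid D\}$ (sending each $\tau$ to the norm of its associated ideal), one obtains a surjection $M_v(\tilde\Phi) \otimes \co_F \twoheadrightarrow \underline{J}_1(\F_v)^{\mathcal{R}_\ram} \otimes \co_F$ whose $\tau$-component is $\bar\lambda_{d_\tau,*}$ followed by Ihara's map.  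Separately, by writing any element of $\oplus_v M_v(\tilde\Phi)^{\mathcal{R}} \otimes \co_F$ as a $\co_F$-linear combination of differences of tuples of points (padding with common base points to normalise degrees) and realising each such tuple as the reduction of a single $\mathcal{G}^*$-translate of $\tilde h_s^*$ via Cornut--Vatsal, one upgrades Theorem \ref{cv} to a divisor-level statement: for $s \gg 0$,
$$\mathrm{Div}^0(\mathcal{G}^* \tilde h_s^*) \otimes \co_F \to \oplus_v M_v(\tilde\Phi)^{\mathcal{R}} \otimes \co_F, \qquad \tilde D \mapsto (\mathrm{red}_v(\sigma\tilde D))_{v,\sigma},$$
is surjective.

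The crucial geometric bridge is Lemma \ref{galois degen}: each degeneracy $\lambda_d$ is Galois-equivariant and sends $\tilde h_s^*$ to $\tau_d(h_s^*)$, so it descends to a pushforward $\lambda_{d,*}\colon \mathrm{Div}^0(\mathcal{G}^* \tilde h_s^*) \to \mathrm{Div}^0(\mathcal{G}^* h_s^*)$ that commutes with reduction modulo $v$.  Given a target $(y_{v,\sigma,\tau}) \in \oplus_v \underline{J}_1(\F_v)^{\mathcal{R}_\tors} \otimes \co_F$, I would proceed by (i) using the upgraded Ribet--Ihara surjection to find, for each $(v,\sigma) \in \mathcal{S}\times\mathcal{R}$, an element $\tilde m_{v,\sigma} \in M_v(\tilde\Phi) \otimes \co_F$ whose $\bar\lambda_{d_\tau,*}$-image lifts $y_{v,\sigma,\tau}$ for every $\tau \in \mathcal{R}_\ram$; (ii) for each $\tau$, using the divisor-level Cornut--Vatsal to produce $\tilde D_\tau \in \mathrm{Div}^0(\mathcal{G}^* \tilde h_s^*) \otimes \co_F$ with $\mathrm{red}_v(\sigma\tau \tilde D_\tau) = \tilde m_{v,\sigma}$ for all $(v,\sigma)$ (achieved by applying the surjection to the divisor $\tau\tilde D_\tau$, which lies in the same orbit group); and (iii) setting
$$D := \sum_{\tau \in \mathcal{R}_\ram} \lambda_{d_\tau,*}(\tilde D_\tau) \in \mathrm{Div}^0(\mathcal{G}^* h_s^*) \otimes \co_F.$$

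The main obstacle is the verification that $D$ actually maps to the target.  Using Galois-equivariance of $\lambda_{d,*}$, the $(v,\sigma,\tau_0)$-coordinate of $\mathrm{Red}_v(D)$ unfolds as $\sum_\tau \bar\lambda_{d_\tau,*}(\mathrm{red}_v(\sigma\tau_0 \tilde D_\tau))$: the diagonal term $\tau = \tau_0$ contributes the intended $\bar\lambda_{d_{\tau_0},*}(\tilde m_{v,\sigma})$, but the cross-terms $\tau \neq \tau_0$ require additional care, since they involve reductions not directly controlled by the $\mathcal{R}$-indexed Cornut--Vatsal.  I expect these cross-terms to be handled either via a character-theoretic decomposition of $\co_F[\mathcal{R}_\ram]$ (semisimple because $p > 2$ makes $|\mathcal{R}_\ram|$ a unit), diagonalising the problem into independent surjectivity statements over each character of $\mathcal{R}_\ram$; or by strengthening the divisor-level Cornut--Vatsal to control reductions at the full set $\mathcal{R}_\tors$ rather than just $\mathcal{R}$, which would let one choose the $\tilde D_\tau$ so that the cross-term contributions vanish outright.
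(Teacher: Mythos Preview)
Your ingredients are exactly right, but the assembly creates the cross-term problem you identify, and neither of your proposed fixes is straightforward.  In particular, option (b) --- applying Cornut--Vatsal directly with the full indexing set $\mathcal{R}_\tors$ --- is precisely what the whole apparatus of lifting to level $\tilde\Phi$ and invoking Ribet is designed to circumvent: the hypothesis on the indexing set in Theorem \ref{cv} (see the reference to \cite[\S 3.3.2]{C-V}) is satisfied by $\mathcal{R}$, a set of representatives for $\mathcal{G}_\tors/\mathcal{G}_\ram$, but fails for $\mathcal{R}_\tors$ itself, since distinct elements of $\mathcal{R}_\ram$ do not lie in distinct cosets of the relevant subgroup.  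Option (a) does not obviously dissolve the problem either, since the cross-terms are not governed by any group action of $\mathcal{R}_\ram$ on the data you have constructed.

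The paper sidesteps the cross-terms entirely with a single commutative diagram.  One takes \emph{one} divisor $\tilde D\in\mathrm{Div}^0(\mathcal{G}^*\tilde h^*_s)$ and pushes it down by $\lambda_{1,*}$ alone to obtain $D\in\mathrm{Div}^0(\mathcal{G}^* h^*_s)$.  The key identity, immediate from Lemma \ref{galois degen} and the commutativity of $\mathcal{G}^*$, is that for every $\sigma\in\mathcal{R}$ and every $\tau_d\in\mathcal{R}_\ram$,
\[
(\lambda_{1,*}\tilde D)^{\sigma\tau_d}=\lambda_{d,*}(\tilde D^{\sigma}).
\]
Thus the $\mathcal{R}_\tors$-indexed reduction of $D$ on the $\Phi$-curve is obtained from the $\mathcal{R}$-indexed reduction of $\tilde D$ on the $\tilde\Phi$-curve by applying $\prod_{d\mid D}\lambda_{d,*}$ in the $\mathcal{R}_\ram$-direction.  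The top row (Cornut--Vatsal for $\mathcal{R}$ on $X(\tilde\Phi)$) is surjective for $s\gg 0$, the rightmost vertical arrow (Ribet, Proposition \ref{P:ribet}, applied on each $\mathcal{R}$-coordinate) is surjective, hence the bottom row surjects onto $\bigoplus_v M_v(\Phi)^{\mathcal{R}_\tors}$; Ihara together with $p\nmid\phi(N)$ finishes.  No sum over $\tau$, no cross-terms.  The conceptual point you were missing is that the degeneracy maps $\lambda_d$ for $d\mid D$ are not to be used to build $D$ from several pieces, but rather to \emph{read off} the $\mathcal{R}_\ram$-translates of a single $D=\lambda_{1,*}\tilde D$.
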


\begin{proof}
Let $\mathrm{Div}^0(\mathcal{G}^*\tilde{h}^*_s)$ be the group of degree zero divisors of $X(\tilde{\Phi} )_{/ L_s^* }$  supported on the $\mathcal{G}^*$ orbit of $\tilde{h}^*_s$. Identify $\mathcal{R}_\tors=\mathcal{R}\times\mathcal{R}_\ram$ and identify $\mathcal{R}_\ram$ with the set of positive divisors of $D$ by taking  $d\mid D$ to the Frobenius of the unique $\co$-ideal of norm $d$.
 Consider the  diagram
$$
\xymatrix{
{\mathrm{Div}^0(\mathcal{G}^* \tilde{h}^*_s)}  \ar[r] \ar[d]^{\lambda_{1,*}}&  
{\mathrm{Div}^0(\mathcal{G}^* \tilde{h}^*_s)}^{\mathcal{R}}  \ar[r] \ar[d]^{\prod_{d\mid D}\lambda_{d,*}}
&
{\bigoplus_{v\in \mathcal{S}}    M_v(\tilde{\Phi} )^{\mathcal{R}} }  \ar[d]^{\prod_{d\mid D}\lambda_{d,*}} \\
{\mathrm{Div}^0(\mathcal{G}^* h_s^*  )}  \ar[r] &
{\mathrm{Div}^0(\mathcal{G}^* h^*_s)}^{\mathcal{R}_\tors}  \ar[r]
&
{\bigoplus_{v\in \mathcal{S}} M_v(\Phi)^{\mathcal{R}_\tors}  } 
}
$$
in which the upper left horizontal arrow takes $h$ to the $|\mathcal{R}|$-tuple $(h^\sigma)_{\sigma}$ and similarly for the bottom left horizontal arrow. The upper right  horizontal arrow is the map on divisors induced by (\ref{wee reduction}) and the lower right arrow is defined in a similar way.  The composition of the upper row is then the map on divisors induced by the function of Theorem \ref{cv}, and so is surjective for $s\gg 0$ by that theorem.  The commutativity of the left hand square follows from Lemma \ref{galois degen}, while the commutativity of the right hand square is clear.  The rightmost vertical arrow is surjective by Proposition \ref{P:ribet}. Hence the bottom horizontal composition is surjective for $s\gg 0$, and the surjectivity of (\ref{rnv I}) then follows from Theorem \ref{T:ihara} and our hypothesis that $p$ does not divide $\phi(N)$.
\end{proof}

\begin{proof}[Proof of Theorem \ref{RNV}]
By  \cite[Lemma 8.1]{hida}  $\Ta_p^\ord(J_1)$ is the module of $\Gamma$ coinvariants of $\mathbf{Ta}^\ord$. This allows us to identify 
\begin{equation}\label{residual galois iso}
\Ta_p^\ord(J_1)/\mathfrak{m} \Ta_p^\ord(J_1) \iso\FT/\mathfrak{m}\FT.
\end{equation}
Define $\mathcal{H}_s$ to be the image of the composition
$$
\mathrm{Div}^0(\mathcal{G}^* h_s^*)\otimes\co_F  \map{}  J_1(L_s^* ) \otimes\co_F 
\map{}  H^1(L_s^* , \Ta^\ord_p(J_1))
\map{}  H^1(L_s^* , \FT/\mathfrak{m}\FT)
$$
and define
$$
e_\chi':H^1(L_s^* , \Ta^\ord_p(J_1))\map{}H^1(L_s^* , \Ta^\ord_p(J_1))
$$
by
$$
e_\chi'(h)=\sum_{\sigma\in\mathcal{R}_\tors}(\Theta^{-1}\chi)(\sigma)\cdot h^\sigma.
$$
We claim that the dimension of  $e_\chi'\mathcal{H}_{s}$ as an $R/\mathfrak{m}$-vector space goes to $\infty$ as $s\to\infty$. We may compute the image of $e_\chi'\mathcal{H}_s$ under the sum of localization maps
$$
H^1( L_s^* , \FT/\mathfrak{m}\FT)  \map{}\bigoplus_{v\in\mathcal{S}} H^1( L_{s,v}^*, \FT/\mathfrak{m}\FT)
$$
as the image of the composition  of (\ref{rnv I}) and
\begin{eqnarray}\lefteqn{ \label{VNV composition}
\bigoplus_{v\in\mathcal{S}}  \underline{J}_1(\F_v)^{\mathcal{R}_\tors}\otimes\co_F \map{} \bigoplus_{v\in\mathcal{S}} H^1(\F_v, \Ta_p^\ord(J_1))^{\mathcal{R}_\tors} } \hspace{1cm} \\
& & \map{} \bigoplus_{v\in\mathcal{S}}  H^1(\F_v, \FT/\mathfrak{m}\FT)^{\mathcal{R}_\tors} 
\map{e'_\chi}  \bigoplus_{v\in\mathcal{S}} H^1(\F_v, \FT/\mathfrak{m}\FT), \nonumber
\end{eqnarray}
using the isomorphism
$$
 H^1(\F_v, \FT/\mathfrak{m}\FT)\iso
  H^1_\mathrm{unr}(L^*_{s,v}, \FT/\mathfrak{m}\FT).
$$
In the above composition  the arrow labeled $e_\chi'$ preserves the $v$ component for each $v\in\mathcal{S}$ and  takes the $| \mathcal{R}_\tors|$-tuple $(x_\sigma)_{\sigma\in\mathcal{R}_\tors}$ to 
$$
\sum_{\sigma\in\mathcal{R}_\tors} (\Theta^{-1}\chi)(\sigma)\cdot x_\sigma \in H^1(\F_v, \FT/\mathfrak{m}\FT).
$$
 Each arrow in the composition (\ref{VNV composition}) is surjective; this follows from the fact that the absolute Galois group of $\F_v$ has cohomological dimension $1$, and, for the first arrow,  Lang's  theorem on the vanishing of $H^1(\F_v,\underline{J}_1(\overline{\F}_v))$ as in \cite[Proposition I.3.8]{milne:duality}.
 Combining this with Lemma \ref{some surjectivity}, for $s\gg 0$ the dimension of  $e'_\chi\mathcal{H}_{s}$ is at least that of  $\bigoplus_{v\in\mathcal{S}} H^1(\F_v, \FT/\mathfrak{m}\FT)$. If we choose $\mathcal{S}$ in such a way that the Frobenius of the prime of $\Q$ below each $v\in\mathcal{S}$ acts as (a conjugate of) complex conjugation on the extension of $K(\mu_{Np})$  cut out by $\FT/\mathfrak{m}\FT$, then  
 $$
 \dim_{R/\mathfrak{m}}H^1(\F_v, \FT/\mathfrak{m}\FT)=2.
 $$ 
 Thus the dimension of $e_\chi'\mathcal{H}_{s}$ over $R/\mathfrak{m}$ is at least $2|\mathcal{S}|$ for $s\gg 0$.  As we may take $\mathcal{S}$ as large as we like, $\dim(e_\chi'\mathcal{H}_{s})\to\infty$.

Recall $L_s^*=H_{p^{s+1}}(\mu_p)=L_{p^s,1}$ and $h_s^*=h_{p^s,1}$.  The character $$\Theta:G_\Q\map{}(R/\mathfrak{m})^\times$$ is trivial over $\Q(\mu_p)$, and twisting the 
isomorphism (\ref{residual galois iso}) by $\Theta^{-1}$ gives an isomorphism
$$
H^1(L^*_{s-1} ,\FT^\dagger/\mathfrak{m}\FT^\dagger)\iso
H^1( L^*_{s-1}, \Ta_p^\ord(J_1)/\mathfrak{m}\Ta_p^\ord(J_1) )\otimes\zeta_1
$$
taking the image of $\mathfrak{X}_{p^s}$ to the image of $U_p^{-1}\mathfrak{X}_{p^s,1}$ (by Definition \ref{Def:big}).  Tracing through the constructions of \S \ref{ss:construction},  the image of $h_s^*$ under
$$
X(\Phi)(L_s^*) \map{}\mathrm{Div}(X(\Phi)_{/L_s^*} )\otimes\co_F \map{e^\ord} J_1(L_s^*)^\ord 
 \map{} H^1(L_s^*,\FT/\mathfrak{m}\FT)
 $$
 agrees with the Kummer image of $y_{p^s,1} \in J_1(L_s^*)^\ord $,  which is taken to $\mathfrak{X}_{p^s,1}$ under 
\begin{equation}\label{final step}
H^1(L_s^*,\FT/\mathfrak{m}\FT)  \map{\mathrm{cor}}  H^1(L_{s-1}^*,\FT/\mathfrak{m}\FT)  \map{   h\mapsto h\otimes\zeta_1}  H^1(L_{s-1}^*,\FT/\mathfrak{m}\FT)\otimes\zeta_1.
\end{equation}
If follows that (\ref{final step}) takes $\mathcal{H}_s$ to the restriction of $\mathrm{Heeg}_s$ to $L_{s-1}^*$.  But
$$
\mathrm{N}(h_s^*)= \mathrm{N}(h_{p^s,1})=\mathrm{N}(\alpha_*(h_{p^{s-1},2}))=U_p\cdot h_{p^{s-1},1}=U_p\cdot h_{s-1}^*.
$$
by (\ref{Euler I equ 2}), where all norms are from $L_s^*$ to $L_{s-1}^*$, and so $\mathrm{cor}(\mathcal{H}_s)=\mathcal{H}_{s-1}$. We conclude
$$
\mathcal{H}_{s-1}\otimes\zeta_1=\mathrm{res}_{H_{p^s}(\mu_p)/H_{p^s}}(\mathrm{Heeg}_s)
$$
and in particular
\begin{eqnarray*}
(e_\chi'\mathcal{H}_{s-1})\otimes\zeta_1&=&
\left(\sum_{\sigma\in\mathcal{R}_\tors}\chi(\sigma)\sigma\right)\cdot ( \mathcal{H}_{s-1}\otimes\zeta_1 ) \\
&=&  \mathrm{res}_{H_{p^s}(\mu_p)/H_{p^s}}(e_\chi \mathrm{Heeg}_s).
\end{eqnarray*}
As the dimension of $e'_\chi\mathcal{H}_{s-1}$ is unbounded as $s\to\infty$, so is that of $e_\chi\mathrm{Heeg}_s$.
\end{proof}


\subsection{A  two-variable main conjecture}
\label{SS:MC}


Let $D_s/K$ be the subfield of $D_\infty$ of degree $p^s$ over $K$.   There is a nonnegative integer $\delta$ such that the fixed field $(H_{p^{s+1}})^{\mathcal{G}_\tors}=H_{p^{s+1}}\cap D_\infty$  is equal to $D_{p^{s+\delta}}$ for all $s\gg 0$. If $p$ does not divide the class number of $K$ then $\delta=0$. Define, for any $s\ge 0$ and using (\ref{big nek green}) and Proposition \ref{Prop:greenberg},
$$
\mathfrak{Z}_s\in \tilde{H}^1_f(D_s,\FT^\dagger)
$$
to be the image of $U_p^{-t}\cdot \mathfrak{X}_{p^{t+1}}$  under corestriction
$$
\tilde{H}^1_f(H_{p^{t+1}},\FT^\dagger)\map{}  \tilde{H}^1_f(D_s,\FT^\dagger)
$$
for any $t$ large enough that $D_s\subset H_{p^{t+1}}$. By Proposition \ref{Prop:ESrelations} the class $\mathfrak{Z}_s$ does not depend on the choice of $t$, and as $s$ varies these classes are  norm compatible. We may therefore define a module
$$
\tilde{H}^i_{f,\Iw}(D_\infty,\FT^\dagger)   =   \mil  \tilde{H}^i_f(D_s,\FT^\dagger)
$$
over the ring $R_\infty=R[[\Gal(D_\infty/K)]]$ and define $\mathfrak{Z}_\infty\in \tilde{H}^1_{f,\Iw}(D_\infty,\FT^\dagger)$ by  
$$
\mathfrak{Z}_\infty  = \mil\mathfrak{Z}_s.
$$
By Corollary \ref{VNV}, $\mathfrak{Z}_\infty$ is not $R_\infty$-torsion. Indeed, if $\mathfrak{p}$ is any arithmetic prime of $R$ and $\mathfrak{P}$ denotes the kernel of the map $R_\infty\map{} R_\infty\otimes_R F_\mathfrak{p},$ then $\mathfrak{P}$ is a height one prime of $R_\infty$ at which $\mathfrak{Z}_\infty$ is locally nontrivial.   As there are infinitely many such $\mathfrak{P}$, $\mathfrak{Z}_\infty$ is nontorsion (exactly as in the proof of Lemma \ref{ring lemma}).

The following conjecture is an extension of the Heegner point main conjecture for elliptic curves formulated by Perrin-Riou \cite{pr}. Partial results toward Perrin-Riou's conjecture have been obtained by Bertolini \cite{bert} and the author \cite{HowA,HowB}.  Assume that $R$ is regular, so that $R_{\infty,\mathfrak{P}}$ is a discrete valuation ring for every height one prime $\mathfrak{P}$ of $R_\infty$.  If $M$ is a finitely generated torsion $R_\infty$-module, define the characteristic ideal
$$
\mathrm{char}(M)=\prod_{\mathfrak{P}} \mathfrak{P}^{\mathrm{length}(M_\mathfrak{P})}
$$
where the product is over height one primes of $R_\infty$.  If $M$ is not torsion then set $\mathrm{char}(M)=0$.

\begin{Con}\label{mc}
Assuming that $R$ is regular
$$
\mathrm{char}\big(\tilde{H}^1_{f,\Iw}(D_\infty,\FT^\dagger) \big/ R_\infty \mathfrak{Z}_\infty\big)^2
= \mathrm{char}\big( \tilde{H}^2_{f,\Iw}(D_\infty,\FT^\dagger)_{\tors}  \big)
$$
in which the subscript $\tors$ indicates the $R_\infty$-torsion submodule.
\end{Con}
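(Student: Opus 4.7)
The plan is to prove one divisibility of Conjecture \ref{mc} by adapting Kolyvagin's Euler system machinery to the two-variable setting, and leave the reverse divisibility (which genuinely requires a two-variable $p$-adic $L$-function and its relation to $\mathfrak{Z}_\infty$ via some form of reciprocity law) for a separate argument along the lines of Skinner--Urban.

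First I would verify that the classes $\mathfrak{X}_c$, together with their Iwasawa-theoretic norm-compatibility along the tower $D_\infty/K$ established through Proposition \ref{Prop:ESrelations} and the definition of $\mathfrak{Z}_\infty$, form an honest Euler system for $\FT^\dagger$ in the sense required by Rubin or (more naturally here) by Nekov\'a\v{r}'s formalism of Selmer complexes. The key inputs are exactly Proposition \ref{Prop:ESrelations} (corestriction along $H_{c\ell}/H_c$ gives $T_\ell$, and along $H_{cp}/H_c$ gives $U_p$), Proposition \ref{congruence} (the Eichler--Shimura-type congruence at inert primes), and Proposition \ref{Prop:greenberg} (the classes land in the strict Greenberg Selmer group, equivalently $\tilde H^1_f$ by \eqref{big nek green}). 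The base of the Euler system, $\mathfrak{Z}_\infty$, is not $R_\infty$-torsion by the nonvanishing argument in the remark following its definition, which feeds off Corollary \ref{VNV}.

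Next, I would run Kolyvagin's derivative construction over $R_\infty$. For each squarefree product $n$ of Kolyvagin primes $\ell$ (primes inert in $K$, not dividing $Np$, and satisfying suitable congruence conditions modulo the annihilator of $\FT^\dagger/I\FT^\dagger$ for an ideal $I\subset R_\infty$) one defines a derivative class $\mathfrak{X}_n^{\mathrm{der}}$ in $\tilde H^1_f(K, \FT^\dagger/I\FT^\dagger \otimes \text{twist})$ whose local behavior is governed by the axioms above; modulo issues of finite-order torsion one obtains, for each height-one prime $\mathfrak{P}$ of $R_\infty$ dividing $\mathrm{char}(\tilde H^2_{f,\Iw,\tors})$, an inequality of lengths relating $\mathrm{length}_{\mathfrak{P}}$ of the cokernel $\tilde H^1_{f,\Iw}(D_\infty,\FT^\dagger)/R_\infty\mathfrak{Z}_\infty$ to $\mathrm{length}_{\mathfrak{P}}$ of $\tilde H^2_{f,\Iw}(D_\infty,\FT^\dagger)_{\tors}$, with the characteristic ``factor of $2$'' coming, as always for Heegner Euler systems, from the self-duality pairing \eqref{pairing} combined with Poitou--Tate global duality applied to $\tilde H^1_f \times \tilde H^2_f$. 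This yields the divisibility
\[
\mathrm{char}\bigl(\tilde H^2_{f,\Iw}(D_\infty,\FT^\dagger)_{\tors}\bigr) \ \Big|\ \mathrm{char}\bigl(\tilde H^1_{f,\Iw}(D_\infty,\FT^\dagger)/R_\infty\mathfrak{Z}_\infty\bigr)^2.
\]

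The main obstacle is that the ambient ring $R_\infty$ is two-dimensional (it is a finite flat extension of $\Lambda[[\Gal(D_\infty/K)]]$, regular by assumption but not a DVR), so the usual Euler system descent --- which in Perrin-Riou's setting takes place over a one-variable Iwasawa algebra and works prime-by-prime modulo $p^n$ --- must be rerun over $R_\infty$ and localized at arbitrary height-one primes $\mathfrak{P}$. This requires two technical ingredients: (i) the finiteness and control over residual $\mathrm{Gal}(\overline{\Q}/K(\mu_{p^\infty}))$-cohomology of $\FT^\dagger/\mathfrak{P}^n$, which uses the big-image hypothesis coming from the assumed absolute irreducibility of the residual representation and Lemma \ref{sign}; and (ii) appropriate Chebotarev density statements over $K(\FT^\dagger[\mathfrak{P}^n])$ to produce enough Kolyvagin primes. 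Both are standard extensions of Howard (HowA, HowB) and Nekov\'a\v{r}'s selmer-complex framework (selmer\_complexes) to the two-variable setting. The equality --- as opposed to divisibility --- would then follow either from a two-variable analog of the Gross--Zagier formula, linking $\mathfrak{Z}_\infty$ to a two-variable $p$-adic $L$-function whose characteristic ideal matches $\tilde H^2_{f,\Iw,\tors}$ by an independent (main-conjecture-style) input, or, in appropriate ranges, by specializing to arithmetic primes $\mathfrak{p}$ and invoking the known one-variable results for $V_\mathfrak{p}^\dagger$.
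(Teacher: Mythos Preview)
The statement you are attempting to prove is labeled \emph{Conjecture} in the paper, and the paper provides no proof of it whatsoever. It is formulated as a two-variable analogue of Perrin-Riou's Heegner point main conjecture and is left entirely open; the surrounding discussion only remarks that ``a suitable extension of the theory of Euler systems would prove one divisibility of this conjecture,'' citing \cite{bert,HowA,HowB} for partial results in the one-variable case. So there is no ``paper's own proof'' against which to compare your proposal.

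What you have written is not a proof but a plausible research outline, and you seem aware of this: you explicitly defer the reverse divisibility to a Skinner--Urban-type argument and acknowledge that the Kolyvagin descent over the two-dimensional ring $R_\infty$ requires nontrivial extensions of the existing machinery. That first divisibility sketch is broadly consistent with what the paper says one should expect, and the ingredients you list (Propositions \ref{Prop:ESrelations}, \ref{congruence}, \ref{Prop:greenberg}, the pairing \eqref{pairing}, the nontorsion of $\mathfrak{Z}_\infty$) are indeed the right ones. But none of this is carried out in the paper, and your sketch does not constitute a proof either: the two technical ingredients you flag---control of residual cohomology over $R_\infty/\mathfrak{P}^n$ at \emph{all} height-one primes $\mathfrak{P}$, and the Chebotarev supply of Kolyvagin primes in this setting---are genuine obstacles, not routine verifications, and the reverse divisibility is a separate open problem. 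You should present this as a strategy toward the conjecture, not as a proof.
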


\begin{Rem}
As we assume that $N$ is prime to $\mathrm{disc}(K)$, the branch $R$ of the Hida family does not have complex multiplication by $K$ (that is, $R$ does not arise by the construction of 
\cite[\S 7]{hida_congruence} from a Hecke character of $K$).  In this situation Nekov\'{a}\v{r}, using results of Cornut-Vatsal and duality theorems for Selmer complexes, proves \cite[Theorem 12.9.11]{selmer_complexes} that for $i=1,2$
$$
\mathrm{rank}_{R_\infty}\big( \tilde{H}^i_{f,\Iw}(D_\infty,\FT^\dagger) \big)=1.
$$
\end{Rem}

\begin{Rem}
Iwasawa main conjectures are more often expressed in terms of  the Pontryagin dual of a Selmer group attached to the discrete Galois module $\mathbf{A}^\dagger=\Hom_{\Z_p}(\FT^\dagger,\mu_{p^\infty})$.  Conjecture \ref{mc} may be reformulated in this manner using the isomorphism of Poitou-Tate global duality \cite[\S 0.13]{selmer_complexes}
$$
 \tilde{H}^2_{f,\Iw}(D_\infty,\FT^\dagger) \iso \Hom_{\Z_p} \big(\tilde{H}^1_f(D_\infty,\mathbf{A}^\dagger),\Q_p/\Z_p\big)
$$
for an appropriate extended Selmer group  $\tilde{H}^1_f(D_\infty,\mathbf{A}^\dagger)$.
\end{Rem}


\subsection{The horizontal nonvanishing conjecture}
\label{SS:HNV}


Let $\mathfrak{X}_1$ be the big Heegner point of conductor $c=1$ as in Definition \ref{Def:big}, so that
$$
\mathfrak{Z}_0= \mathrm{Cor}_{H_1/K}(\mathfrak{X}_1) \in\tilde{H}^1_f(K,\FT^\dagger).
$$

\begin{Con}\label{HNV}
The cohomology class $\mathfrak{Z}_0$ is not $R$-torsion.
\end{Con}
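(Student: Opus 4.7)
\medskip

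\noindent\textbf{Proof proposal.} The plan is to reduce to a single arithmetic specialization and then invoke a classical non-vanishing result. By Lemma \ref{ring lemma} applied to the finitely generated $R$-module $\tilde{H}^1_f(K,\FT^\dagger)$ and the element $\mathfrak{Z}_0$, it suffices to exhibit one arithmetic prime $\mathfrak{p}\subset R$ at which the image of $\mathfrak{Z}_0$ in $\tilde{H}^1_f(K,V_\mathfrak{p}^\dagger)$ is nontrivial. The natural primes to try are those of weight two and trivial wild character: at such $\mathfrak{p}$ the representation $V_\mathfrak{p}^\dagger \iso V_\mathfrak{p}$ is geometric, realized (up to isogeny) on the Tate module of a factor $A_\mathfrak{p}$ of $J_1$, and the critical twist by $\Theta^{-1}$ reduces to the identity on the specialization.

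First I would verify, by tracing through \S\ref{ss:construction}, that at such a weight-two prime $\mathfrak{p}$ the specialization of $\mathfrak{X}_1$ coincides, up to a nonzero factor in $F_\mathfrak{p}$, with the Kummer image of the classical Heegner point $y_1 \in A_\mathfrak{p}(H_1)$ attached to the CM elliptic curve with complex multiplication by $\co$ and level structure $\mathfrak{N}$. Under this identification, $\mathfrak{Z}_0 = \mathrm{Cor}_{H_1/K}(\mathfrak{X}_1)$ specializes to (a nonzero scalar times) the Kummer class of the trace $P_K = \mathrm{Tr}_{H_1/K}(y_1) \in A_\mathfrak{p}(K)$. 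In particular, non-triviality of the image of $\mathfrak{Z}_0$ is equivalent to $P_K$ being non-torsion in $A_\mathfrak{p}(K) \otimes \Q_p$.

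Second, I would appeal to the Gross--Zagier formula. By Proposition \ref{parity}, $\mathfrak{Z}_0$ lies in the $w$-eigenspace for complex conjugation on $\tilde{H}^1_f(K,\FT^\dagger)$, and under our hypothesis that all prime divisors of $N$ split in $K$ the sign of the functional equation of $L(g_\mathfrak{p}/K,s)$ at a weight-two prime equals $-w$. In the regime where this sign is $-1$ (so the central value is forced to vanish), Gross--Zagier identifies the height of $P_K$ with a nonzero multiple of $L'(g_\mathfrak{p}/K,1)$. Hence the conjecture reduces to producing, in the Hida family of $g$, one weight-two arithmetic prime $\mathfrak{p}$ for which $L'(g_\mathfrak{p}/K,1) \neq 0$.

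The main obstacle, then, is precisely this analytic non-vanishing for $L'(g_\mathfrak{p}/K,1)$ at some weight-two specialization of the Hida family. This is a statement about central derivatives of complex $L$-functions varying in a $p$-adic family and is not a formal consequence of the vertical non-vanishing Theorem \ref{RNV}, which controls behavior as the anticyclotomic conductor $p^s\to\infty$ at a \emph{fixed} prime $\mathfrak{p}$ but says nothing about the classes at $c=1$ after full corestriction. A possible strategy to circumvent direct use of Gross--Zagier is Iwasawa-theoretic: combine the forthcoming Euler system divisibility coming from Conjecture \ref{mc}, the two-variable $p$-adic $L$-function $L_p \in R[[\Z_p^\times]]$ of Proposition \ref{functional equation}, and the non-vanishing of $L_p(g_\mathfrak{p},\Theta_\mathfrak{p})$ at infinitely many weight-two primes with sign $-w = -1$ in the functional equation, to force $\mathfrak{Z}_0$ to be non-torsion. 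In either route, the core difficulty is the same: linking the \emph{horizontal} (i.e., in the weight/wild-character direction) variation of central $L$-values or derivatives to non-torsion-ness at the base level $c=1$.
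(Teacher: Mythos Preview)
The statement you are attempting to prove is labeled \emph{Conjecture}~\ref{HNV} in the paper, not a theorem; the paper does not prove it and explicitly leaves it open. There is therefore no proof in the paper against which to compare your proposal.

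Your outline correctly identifies the natural approach --- specialize to a weight-two arithmetic prime, identify the specialization of $\mathfrak{Z}_0$ with the Kummer image of a classical Heegner point, and invoke Gross--Zagier --- and you correctly isolate the essential obstruction: one would need $L'(g_\mathfrak{p}/K,1)\neq 0$ for some weight-two member of the Hida family, and no such result is available in general. This is exactly why the statement is stated as a conjecture; the paper uses it only as a hypothesis (see Corollary~\ref{generic rank}). One minor technical slip: your appeal to Lemma~\ref{ring lemma} is not quite right. That lemma gives the implication ``nontorsion $\Rightarrow$ nonzero image at all but finitely many $\mathfrak{p}$,'' and its contrapositive does not say that nonzero image at a \emph{single} arithmetic prime forces $\mathfrak{Z}_0$ to be nontorsion --- a torsion element whose annihilator is contained in $\mathfrak{p}$ can have nonzero image in $\tilde{H}^1_f(K,V_\mathfrak{p}^\dagger)$. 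What would suffice is nonzero image at infinitely many arithmetic primes. But as you yourself acknowledge, even producing one such prime already lies beyond what is established in the paper.
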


The theory of Euler systems allows one to use the cohomology classes $\mathfrak{X}_c$ to bound the Selmer group of $V_\mathfrak{p}^\dagger$.

\begin{Thm}[Nekov{\'a}{\v{r}}]\label{ES bound}
Let $\mathfrak{p}\subset R$ be an arithmetic prime with trivial wild character and weight $r\equiv k+j\pmod{p-1}$ with $r>2$. If $\mathfrak{Z}_0$ has nontrivial image in $\tilde{H}^1_f(K,V_\mathfrak{p}^\dagger)$ then $\dim_{F_\mathfrak{p}}\tilde{H}^1_f(K,V_\mathfrak{p}^\dagger)=1$.
\end{Thm}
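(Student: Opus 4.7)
The plan is to apply the Kolyvagin Euler system machinery in the form developed by Nekov\'a\v{r} \cite[Chapter 12]{selmer_complexes}. First I would specialize the Euler system $\{\mathfrak{X}_c\}$ along the tautological map $\FT^\dagger\map{}V_\mathfrak{p}^\dagger$ to obtain classes $\mathfrak{X}_{c,\mathfrak{p}}\in H^1(H_c,V_\mathfrak{p}^\dagger)$ for every $c$ prime to $N$. The hypotheses imposed on $\mathfrak{p}$ (trivial wild character, weight $r>2$, and $r\equiv k+j \pmod{p-1}$) guarantee via Definition \ref{Def:exceptional} that $\mathfrak{p}$ is not exceptional, so that the identifications (\ref{nek-green compare}) and (\ref{bloch-kato compare}) collapse the extended, Greenberg, and Bloch-Kato Selmer groups for $V_\mathfrak{p}^\dagger$ into one. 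Moreover Lemma \ref{oldform} shows that $g_\mathfrak{p}$ is the ordinary $p$-stabilization of a newform of tame level $N$, so $V_\mathfrak{p}^\dagger$ is crystalline at $p$ and its ordinary filtration is the one coming from Proposition \ref{Prop:ord}.

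Next I would verify that $\{\mathfrak{X}_{c,\mathfrak{p}}\}$ is a genuine anticyclotomic Euler system for $V_\mathfrak{p}^\dagger$ landing in the Bloch-Kato Selmer group. The norm relation at primes inert in $K$ comes from Proposition \ref{Prop:ESrelations}, the local congruence at such primes from Proposition \ref{congruence}, and the local conditions at primes above $Np$ from Proposition \ref{Prop:greenberg} combined with the identifications above. Self-duality of $V_\mathfrak{p}^\dagger$ is obtained by tensoring the perfect pairing (\ref{pairing}) with $F_\mathfrak{p}$, and Proposition \ref{parity} shows that the base class $\mathfrak{Z}_{0,\mathfrak{p}}:=\mathrm{Cor}_{H_1/K}(\mathfrak{X}_{1,\mathfrak{p}})$ lies in a definite $\pm$-eigencomponent for the action of $\Gal(K/\Q)$ determined by the sign $w$ of Lemma \ref{sign}.

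Finally I would apply Nekov\'a\v{r}'s rank-one bound \cite[\S 12.5]{selmer_complexes}: for a self-dual ordinary Galois representation with absolutely irreducible residual representation and sufficient largeness of Galois image, nontriviality of the Heegner base class forces $\dim_{F_\mathfrak{p}}\tilde{H}^1_f(K,V_\mathfrak{p}^\dagger)=1$. The required absolute irreducibility is part of the standing hypothesis on $\rho_g$, while the required largeness of image follows from irreducibility together with the assumption that $N$ is prime to $\mathrm{disc}(K)$, which by the results of \cite[\S 12.7]{selmer_complexes} prevents the branch $R$ from having CM by $K$. The principal technical obstacle is to check that Nekov\'a\v{r}'s residual image and Kolyvagin-prime existence hypotheses remain valid for $V_\mathfrak{p}^\dagger$ in the presence of the twist by $\Theta_\mathfrak{p}^{-1}$ (whose restriction to $G_K$ need not be trivial); once these are secured, the assumption that $\mathfrak{Z}_{0,\mathfrak{p}}\ne 0$ yields the desired dimension one bound directly.
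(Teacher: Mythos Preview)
Your strategy is the paper's strategy: specialize the big Euler system $\{\mathfrak{X}_c\}$ to $V_\mathfrak{p}^\dagger$, identify the various Selmer groups, and feed the resulting classes into Nekov\'a\v{r}'s Kolyvagin machinery. Two points deserve correction, however.

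First, the reference. The rank-one bound does not come from \cite[\S 12.5]{selmer_complexes}; that section contains control theorems and comparisons, not a Kolyvagin descent. The paper instead invokes \cite{nek:euler}, Nekov\'a\v{r}'s Kuga--Sato paper, whose \S 6--13 carry out the descent for the self-dual twist of a classical newform and require only an abstract family of cohomology classes with the Euler system relations of \S\ref{ss:esr} lying in the Bloch--Kato Selmer group. The paper also notes that the hypothesis $p\nmid (r-2)!$ appearing in \cite{nek:euler} is used solely in the construction of Heegner cycles, not in the descent, and is therefore irrelevant here.

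Second, the role of the hypotheses on $\mathfrak{p}$. Trivial wild character together with $r\equiv k+j\pmod{p-1}$ forces the nebentypus $\psi_\mathfrak{p}\omega^{k+j-r}$ of $g_\mathfrak{p}$ to be trivial (and hence $r$ even). Combined with Lemma~\ref{oldform}, this means $V_\mathfrak{p}^\dagger$ is \emph{exactly} the self-dual representation treated in \cite{nek:euler}. Your ``principal technical obstacle'' about the twist $\Theta_\mathfrak{p}^{-1}$ and image hypotheses therefore dissolves: in this setting $\Theta_\mathfrak{p}$ is a power of the cyclotomic character up to a quadratic twist, and the required residual-image and Kolyvagin-prime conditions are precisely those already verified in \cite{nek:euler} under the standing irreducibility hypothesis.
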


\begin{proof}
Our hypotheses imply that the modular form $g_\mathfrak{p}$ attached to $\mathfrak{p}$ has even weight and trivial character.   In particular by  (\ref{nek-green compare}) and (\ref{bloch-kato compare}) the Greenberg, Nekov{\'a}{\v{r}}, and Bloch--Kato Selmer groups for $V_\mathfrak{p}^\dagger$ all agree. By Lemma \ref{oldform}  the Galois representation $V_\mathfrak{p}^\dagger$ is  a self-dual twist of the representation associated to a newform of level $N$.  Specializing the Euler system  of big Heegner points to an Euler system for $V_\mathfrak{p}^\dagger$, the stated theorem follows from the results of \cite{nek:euler}.  The Euler system used by Nekov{\'a}{\v{r}} is different from the one constructed here (or at least the construction is different), but the proofs in \cite[\S 6--13]{nek:euler} only require the existence of some family of cohomology classes satisfying the Euler system relations of \S\ref{ss:esr} and lying in the Bloch--Kato Selmer group. We note that the hypothesis $p\nmid(r-2)!$ of \cite{nek:euler} is used only in the construction of the Euler system classes and not in the bounding of the Selmer group.
\end{proof}

\begin{Cor}\label{generic rank}
Assume Conjecture \ref{HNV}.  Then $\tilde{H}^1_f(K,\FT^\dagger)$  is a rank one $R$-module and 
$$
\mathrm{rank}_R \ \tilde{H}^1_f(\Q,\FT^\dagger)= \left\{ 
\begin{array}{ll}
1 & \mathrm{if\ }w=1 \\
0 & \mathrm{if\ }w=-1
\end{array}\right.
$$
where $w=\pm 1$ is as in Proposition \ref{functional equation}.
\end{Cor}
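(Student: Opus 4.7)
The plan is first to establish $\mathrm{rank}_R \tilde H^1_f(K,\FT^\dagger) = 1$, and then to use the action of $\tau \in \Gal(K/\Q)$ to read off the $\Q$-rank.

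Write $M = \tilde H^1_f(K, \FT^\dagger)$. The lower bound $\mathrm{rank}_R M \geq 1$ is immediate from Conjecture \ref{HNV}, which says $\mathfrak{Z}_0$ is not $R$-torsion. For the upper bound, apply Lemma \ref{ring lemma} to $\mathfrak{Z}_0 \in M$: for all but finitely many arithmetic primes $\mathfrak{p}$, the image of $\mathfrak{Z}_0$ in $M \otimes_R F_\mathfrak{p}$ is nonzero. Via the natural specialization map $M \otimes_R F_\mathfrak{p} \map{} \tilde H^1_f(K, V_\mathfrak{p}^\dagger)$ (whose kernel is uniformly controlled in $\mathfrak{p}$ by Nekov{\'a}{\v{r}}'s Selmer complex formalism), this non-vanishing descends to $\tilde H^1_f(K, V_\mathfrak{p}^\dagger)$ for almost all arithmetic $\mathfrak{p}$. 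Restricting further to those $\mathfrak{p}$ of trivial wild character and weight $r > 2$ with $r \equiv k+j \pmod{p-1}$ -- still infinitely many -- Theorem \ref{ES bound} gives $\dim_{F_\mathfrak{p}} \tilde H^1_f(K, V_\mathfrak{p}^\dagger) = 1$. Semi-continuity of ranks then yields $\mathrm{rank}_R M \leq \dim_{F_\mathfrak{p}}(M \otimes_R F_\mathfrak{p}) \leq 1$, whence equality.

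To pass from $K$ to $\Q$, use the $\tau$-eigenspace decomposition of $M$, valid since $p$ is odd. Inflation-restriction identifies $M^{\tau = +1} \iso \tilde H^1_f(\Q, \FT^\dagger)$ and $M^{\tau = -1} \iso \tilde H^1_f(\Q, \FT^\dagger \otimes \chi_K)$, where $\chi_K$ is the quadratic character cutting out $K$. By Proposition \ref{parity} applied to $c = 1$, there is some $\sigma \in \Gal(H_1/K)$ with $\mathfrak{X}_1^\tau = w \mathfrak{X}_1^\sigma$; applying $\mathrm{Cor}_{H_1/K}$, which annihilates the $\Gal(H_1/K)$-action and commutes with $\tau$, yields $\mathfrak{Z}_0^\tau = w \mathfrak{Z}_0$, so $\mathfrak{Z}_0 \in M^{\tau = w}$. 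If $w = 1$, the nontorsion class $\mathfrak{Z}_0$ lies in $\tilde H^1_f(\Q, \FT^\dagger) \iso M^{\tau = +1}$, and the rank-one bound on $M$ forces $\mathrm{rank}_R \tilde H^1_f(\Q, \FT^\dagger) = 1$. If instead $w = -1$, then $\mathfrak{Z}_0$ lies in $M^{\tau = -1}$, so that eigenspace has rank at least one, and the rank-one bound on $M$ now forces $\mathrm{rank}_R \tilde H^1_f(\Q, \FT^\dagger) = \mathrm{rank}_R M^{\tau = +1} = 0$.

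The delicate step is the specialization control in the second paragraph: Lemma \ref{ring lemma} only gives non-vanishing in the abstract quotient $M \otimes_R F_\mathfrak{p}$, and one must verify that this non-vanishing survives the natural map into the honest Selmer group $\tilde H^1_f(K, V_\mathfrak{p}^\dagger)$ for generic $\mathfrak{p}$, which requires uniform control of the kernel as $\mathfrak{p}$ varies. One also needs to check that the $\tau$-eigenspace description of $\tilde H^1_f(\Q, \FT^\dagger)$ is compatible with the local conditions defining the extended Selmer groups; this should follow from the $\tau$-equivariance of the filtration $F_v^\pm(\FT^\dagger)$ at primes above $p$.
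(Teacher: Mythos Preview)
Your proposal is correct and follows essentially the same route as the paper's proof: Lemma \ref{ring lemma} plus Theorem \ref{ES bound} to bound the rank over $K$, then Proposition \ref{parity} for the eigenvalue of $\tau$ on $\mathfrak{Z}_0$. The ``delicate step'' you flag is in fact clean: Nekov{\'a}{\v{r}}'s control theorem (cited in the paper as \cite[Proposition 12.7.13.4(i)]{selmer_complexes}) gives an exact sequence
\[
0\map{}\tilde{H}^1_f(K,\FT^\dagger)_\mathfrak{p}/\mathfrak{p}\tilde{H}^1_f(K,\FT^\dagger)_\mathfrak{p} \map{} \tilde{H}^1_f(K,V_\mathfrak{p}^\dagger) \map{} \tilde{H}^2_f(K,\FT^\dagger)_\mathfrak{p}[\mathfrak{p}]\map{}0,
\]
so the specialization map is injective for every arithmetic $\mathfrak{p}$, not merely generically.
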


\begin{proof}
By \cite[Proposition 4.2.3]{selmer_complexes} and \cite[Theorem 8.3.19]{neuk} the extended Selmer group $\tilde{H}^i_f(K,\FT^\dagger)$ is finitely generated over $R$ for all $i$.
Let $\mathfrak{p}\subset R$ be an arithmetic prime. Using Lemma \ref{DVR}, Nekov{\'a}{\v{r}}'s theory  \cite[Proposition 12.7.13.4(i)]{selmer_complexes} provides an exact sequence of extended Selmer groups
$$
0\map{}\tilde{H}^1_f(K,\FT^\dagger)_\mathfrak{p}  /\mathfrak{p}\tilde{H}^1_f(K,\FT^\dagger)_\mathfrak{p} \map{} \tilde{H}^1_f(K,V_\mathfrak{p}^\dagger)  \map{} \tilde{H}^2_f(K,\FT^\dagger)_\mathfrak{p}[\mathfrak{p}]\map{}0
$$
induced by the exact sequence 
$$
0\map{}\FT^\dagger_\mathfrak{p}\map{\pi}\FT^\dagger_\mathfrak{p} \map{}V_\mathfrak{p}^\dagger\map{}0
$$
for any generator  $\pi\in \mathfrak{p}R_\mathfrak{p}$. It follows from Lemma \ref{ring lemma} that $\mathfrak{Z}_0$ has nontrivial image in $\tilde{H}^1_f(K,V_\mathfrak{p}^\dagger)$ for all but finitely many $\mathfrak{p}$. Theorem \ref{ES bound} now shows that  $\tilde{H}^1_f(K,V_\mathfrak{p}^\dagger)$ is one dimensional  for infinitely many arithmetic primes $\mathfrak{p}$. The finite generation of  $\tilde{H}^2_f(K,\FT^\dagger)$ implies that  $\tilde{H}^2_f(K,\FT^\dagger)_\mathfrak{p}$  has no $R_\mathfrak{p}$ torsion for all but finitely many arithmetic primes $\mathfrak{p}$. From the above exact sequence of extended Selmer groups and Nakayama's lemma we deduce that the $R$-module  $\tilde{H}^1_f(K,\FT^\dagger)$ is locally  generated by a single element at infinitely many arithmetic primes $\mathfrak{p}$. The torsion submodule of $\tilde{H}^1_f(K,\FT^\dagger)$ has finite  support, and so $\tilde{H}^1_f(K,\FT^\dagger)$ is locally free of rank one at infinitely many arithmetic primes. The existence of any one such prime implies that  $\tilde{H}^1_f(K,\FT^\dagger)$ has  rank one. Proposition \ref{parity} shows that complex conjugation acts as $w$ on $\mathfrak{Z}_0$, and so the second claim follows from the first.
\end{proof}

\bibliographystyle{plain}

\end{document}